\numberwithin{equation}{section}
\numberwithin{figure}{section}
\theoremstyle{remark}
\newtheorem*{rem*}{\protect\remarkname}
\theoremstyle{plain}
\newtheorem{thm}{\protect\theoremname}[section]
\theoremstyle{plain}
\newtheorem{lem}[thm]{\protect\lemmaname}
\theoremstyle{plain}
\newtheorem*{lem*}{\protect\lemmaname}
\theoremstyle{plain}
\newtheorem{cor}[thm]{\protect\corollaryname}
\providecommand{\corollaryname}{Corollary}
\providecommand{\lemmaname}{Lemma}
\providecommand{\remarkname}{Remark}
\providecommand{\theoremname}{Theorem}
\providecommand{\corollaryname}{Corollary}
\providecommand{\lemmaname}{Lemma}
\providecommand{\remarkname}{Remark}
\providecommand{\theoremname}{Theorem}
\begin{document}
\newcommand\mpar[1]{\marginpar{\tiny \color{red} #1}} 
\newcommand\bblue[1]{{\color{blue} #1}} 
\newcommand\yyellow[1]{{\color{yellow} #1}} 
\newcommand\rred[1]{{\color{red} #1}} 
\newcommand\ggreen[1]{{\color{green} #1}}

\newcommand{\uu}{\mathbf{ u}}
\newcommand{\vv}{\mathbf{ v}}
\newcommand{\ww}{\mathbf{ w}}
\newcommand{\ee}{\mathbf{ e}}
\newcommand{\cL}{\mathcal{L}}
\def\avint{\mathop{\,\rlap{$\diagup$}\!\!\int}\nolimits}
\def\Xint#1{\mathchoice
{\XXint\displaystyle\textstyle{#1}}%
{\XXint\textstyle\scriptstyle{#1}}%
{\XXint\scriptstyle\scriptscriptstyle{#1}}%
{\XXint\scriptscriptstyle\scriptscriptstyle{#1}}%
\!\int}
\def\XXint#1#2#3{{\setbox0=\hbox{$#1{#2#3}{\int}$ }
\vcenter{\hbox{$#2#3$ }}\kern-.58\wd0}}
\def\ddashint{\Xint=}
\def\dashint{\Xint-}

\newcommand{\mres}{\mathbin{\vrule height 1.6ex depth 0pt width 0.13ex \vrule height 0.13ex depth 0pt width 1.3ex}}

\newcommand{\henrik}[1]{\begin{quotation}[\textbf{\color{red}Henrik's comment:\
		}{\color{red}\textit{#1}}]\end{quotation}}

\keywords{Free boundary, Heat conduction, p-Laplacian.  }
	
\subjclass[2010]{
		35R35}
	\date{}

\title{A weakly coupled  system of $p$-Laplace type in a heat conduction
 problem}
\author{Morteza Fotouhi, Mohammad Safdari, Henrik Shahgholian}
\maketitle

\begin{abstract}
We study temperature distribution in a heat conducting problem, for a system of p-Laplace equation, giving rise  to a free boundary.
\end{abstract}

 \tableofcontents

\section{Introduction}

\subsection{Background}

This paper delves into an extension of a classical optimization problem in heat conduction, presenting a succinct description as follows: Given a surface $\partial \Omega$ in $\mathbb{R}^{n}$ and positive constant functions defined on it (representing the temperature distribution), the aim is to enclose $\partial \Omega$ with a prescribed volume of insulating material to minimize heat loss in a stationary scenario. Mathematically, the objective is to discover a vector-valued function $\mathbf{u} = (u^1, \cdots , u^m)$ ($m \geq  1$) that corresponds to the temperature within $\Omega$. Whenever the components of $\mathbf{u}$ are nonnegative  and the volume of its support is equal to $1$, they become $p$-harmonic. The target is to minimize the heat flow, which can be regarded as a continuous family of convex functions dependent on $\nabla\mathbf{u}$ along $\partial \Omega$.

Our research was inspired by a series of  papers \cite{aguilera1986optimization,aguilera1987optimization,alt1981existence} and their generalization presented in \cite{teixeira2005nonlinear}. The initial two articles focused on studying constant temperature distributions, specifically in the linear case where $\Gamma(x,t)=t$. This linear setting enabled \cite{aguilera1986optimization,alt1981existence} to reduce the quantity to be minimized to the Dirichlet integral. However, even within the linear case, the problem of nonconstant temperature distribution, examined in \cite{aguilera1987optimization}, introduced various new challenges.

The main objective of our article is to explore the system version of the nonlinear case with a nonconstant temperature distribution, wherein the equation is governed by the $p$-Laplacian. The nonlinearity addressed in this paper  holds significant physical importance, as problems involving monotone operators, akin to those studied in \cite{teixeira2005nonlinear}, arise in the optimization of domains for electrostatic configurations.

The nonlinearity associated with $\nabla\mathbf{u}$ introduces various new challenges. For instance, computing normal derivatives of $W^{1,p}$-functions becomes problematic, leading to difficulties in providing a reasonable mathematical model. In \cite{aguilera1987optimization}, this challenge was overcome by minimizing the total mass of $\Delta u$, which can be treated as a nonnegative measure when $u$ is subharmonic. However, in the present case, there is no integral representation available for $\int_{\partial \Omega}\Gamma\big(x,A_{\nu}\mathbf{u}(x)\big),d\sigma$. To address this issue, similar to \cite{teixeira2005nonlinear}, we solve appropriate auxiliary variational  problems and compare them with the minimizer.

Now let us introduce the problem in mathematical framework. Let $\Omega\subset\mathbb{R}^{n}$ ($n\geq 2$) be a bounded open set with smooth boundary
whose volume $|\Omega|>1$. Consider the $p$-Laplace differential operator  ($1<p<\infty$) 
\[
\Delta_{p}u^{i}=\mathrm{div}\big(|\nabla u^{i}|^{p-2}\,\nabla u^{i}\big)=\mathrm{div}\big(A[u^{i}]\big),
\]
where we set $A[u^{i}]=A(\nabla u^{i}):=|\nabla u^{i}|^{p-2}\,\nabla u^{i}$
to simplify the notation.

Let $\boldsymbol{\varphi}:\partial \Omega\to \mathbb{R}^{m}$ be a $C^{1}$
function with positive components $\varphi^{i}>0$. For $\mathbf{u}:\Omega\to\mathbb{R}^{m}$
($m\geq 1$) satisfying 
\begin{equation}\label{eq: main_eq}
    \begin{cases}
\Delta_{p}u^{i}=0 & \textrm{in }\{|\mathbf{u}|>0\},\\
u^{i}=\varphi^{i} & \textrm{on }\partial \Omega,\\
\textrm{vol}(\mathrm{spt}\,|\mathbf{u}|)=1,
\end{cases}
\end{equation}
we want to minimize the functional 
\[
J(\mathbf{u}):=\int_{\partial \Omega}\Gamma\big(x,A_{\nu}u^{1}(x),\dots,A_{\nu}u^{m}(x)\big)\,d\sigma(x),
\]
where $\nu$ is the outward normal vector on $\partial \Omega$, 
\[
A_{\nu}u^{i}:=|\nabla u^{i}|^{p-2}\,\partial_{\nu}u^{i},
\]
and $\Gamma(x,\xi):\partial \Omega\times\mathbb{R}^{m}\to\mathbb{R}$ is
a continuous function that satisfies: 
\begin{enumerate}
\item For each fixed $x$, $\Gamma(x,\cdot)$ is a convex function. 
\item For every $i$, $\partial_{\xi_{i}}\Gamma(\cdot,\cdot)$ is positive
and has a positive lower bound on any set of the form $\{(x,\xi):\xi_{i}\ge a\}$.
In addition, $\partial_{\xi_{i}}\Gamma(\cdot,\cdot)$ is bounded above
on any set of the form $\{(x,\xi):\xi_{i}\le b\}$. (The bounds can
depend on $a,b$.) 
\item For each fixed $\xi$, $\partial_{\xi_{i}}\Gamma(\cdot,\xi)$ is a
$C^{1}$ function. 
\end{enumerate}
Note that, as a result, for every $\xi$ we have 
\begin{align*}
\Gamma(x,\xi_{1},\dots,\xi_{m}) & \ge\sum_{i\le m}\partial_{\xi_{i}}\Gamma(x,0)\xi_{i}+\Gamma(x,0)\\
 & \ge\sum_{i\le m}\psi_{i}(x)\xi_{i}-C,
\end{align*}
where $\psi_{i}(x):=\partial_{\xi_{i}}\Gamma(x,0)>0$ are positive $C^{1}$
functions and $C$ is a constant. In particular we have 
\begin{equation}
\Gamma(x,A_{\nu}u_{1},\dots,A_{\nu}u_{m})\ge\sum_{i=1}^{m}\psi_{i}(x)A_{\nu}u^{i}-C.\label{eq: Gamma coercive}
\end{equation}
A typical example of $\Gamma$ is 
\[
\Gamma(x,\xi)=\psi_{1}(x)\gamma_{1}(\xi_{1})+\dots+\psi_{m}(x)\gamma_{m}(\xi_{m}),
\]
where $\psi_{i}$'s are $C^{1}$ and positive, and $\gamma_{i}$'s
are $C^{1}$ increasing convex functions with positive derivative.

\subsection{Structure of the paper}

The structure of our paper is as follows: In Section 2, we introduce the physical problem under consideration. We then formulate a penalized version of the variational problem for the temperature $\mathbf{u}$ and define suitable constraint sets as part of our strategy to overcome the challenges arising from the nonlinearity. We solve the optimization problem over weakly closed subsets of $W^{1,p}$ (the sets $V_{\delta}$), establishing the optimal regularity properties of the minimizers, including Lipschitz regularity. These results are crucial for proving the existence of an optimal configuration for the original penalized problem, as discussed in Section 3. Here we also present fundamental geometric-measure properties of the optimal configuration, such as linear growth away from the free boundary and uniformly positive density. These properties allow us to establish a representation theorem following the framework of \cite{alt1981existence}.

In Section 4, we recover the original physical problem from the penalized problem by showing that for sufficiently small $\varepsilon$, the volume of $\{|\mathbf{u}_{\varepsilon}|>0\}$ automatically adjusts to be equal to $1$.

Section 5 is dedicated to the optimal regularity of the free boundary, for the case $p=2$. We demonstrate that the normal derivative of the minimizer along the free boundary is a H\"older continuous function, leading to the conclusion that the free boundary is a $C^{1,\alpha}$ surface. Furthermore, using the free boundary condition obtained during the proof of H\"older continuity, we establish that the free boundary is an analytic surface, except for a small singular set.

\section{The Penalized Problem }

Let $\Omega_{\delta}:=\{x\in \Omega:\textrm{dist}(x,\partial \Omega)<\delta\}$ and
\begin{align*}
V_{\delta}:=\{\mathbf{u}\in W^{1,p}(\Omega;\mathbb{R}^{m}):\; & u^{i}\ge0,\,\Delta_{p}u^{i}\ge0,\\
 & \Delta_{p}u^{i}=0\textrm{ in }\Omega_{\delta},\,u^{i}=\varphi^{i}\textrm{ on }\partial \Omega\}.
\end{align*}Furthermore, we set 
\[
V:=\bigcup_{\delta>0}V_{\delta}.
\]
Observe that the above definition is consistent due to the assumption
$\varphi^{i}>0$ on $\partial\Omega$. Also, by $\Delta_{p}u^{i}\ge0$
we mean that for any test function $\zeta\in C_{c}^{\infty}(\Omega)$
with $\zeta\ge0$ we have 
\[
-\int_{\Omega}\nabla\zeta\cdot|\nabla u^{i}|^{p-2}\nabla u^{i}\,dx\ge0.
\]
This implies that there is a Radon measure $\mu^{i}$ such that for any
test function $\zeta\in C_{c}^{\infty}(\Omega)$ we have  
\[
\int_{\Omega}\zeta\,d\mu^{i}=-\int_{\Omega}\nabla\zeta\cdot|\nabla u^{i}|^{p-2}\nabla u^{i}\,dx.
\]
To simplify the notation, we denote $\mu^{i}$ by $\Delta_{p}u^{i}$,
and $d\mu^{i}$ by $\Delta_{p}u^{i}\,dx$. (It should be noted that
this notation is not meant to imply $\mu^{i}$ is absolutely continuous
with respect to the Lebesgue measure. In fact, for the minimizer,
the two measures are mutually singular as we will see in Theorem \ref{Thm:Hausdorff-dim-FB}.)

Let $f_{\varepsilon}:\mathbb{R}\to\mathbb{R}$ be 
\[f_{\varepsilon}(t):=\begin{cases}
1+\frac{1}{\varepsilon}(t-1) & t\ge1,\\
1+\varepsilon(t-1) & t<1.
\end{cases}\]
We are interested in minimizing the penalized functional 
\[
J_{\varepsilon}(\mathbf{u}):=\int_{\partial \Omega}\Gamma\big(x,A_{\nu}\mathbf{u}(x)\big)\,d\sigma+f_{\varepsilon}\big(\big|\{|\mathbf{u}|>0\}\big|\big)
\]
over $V$. The significance of the above penalization is that it forces the volume $\big|\{|\mathbf{u}|>0\}\big|$ to be $1$ for small enough $\varepsilon$; see Theorem \ref{thm: vol is 1}. (Notice that the components of $\mathbf{u}\in V$ are $p$-harmonic
near $\partial \Omega$; therefore they are smooth enough near the boundary, and it makes sense to compute their derivatives along $\partial \Omega$.)
We first consider the minimizer of $J_{\varepsilon}$
over $V_{\delta}$. 
\begin{lem}
Let $\mathbf{u}\in V$. Then we have 
\begin{equation}
\int_{\Omega}\psi_{i}\Delta_{p}u^{i}\,dx+\int_{\Omega}\nabla\psi_{i}\cdot A[u^{i}]\,dx=\int_{\partial \Omega}\psi_{i}A_{\nu}u^{i}\,d\sigma,\label{eq: by parts}
\end{equation}
where $\psi_{i}$'s are $C^{1}$ functions. 

\end{lem}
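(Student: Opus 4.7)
The plan is to combine the distributional definition of the measure $\mu^{i}=\Delta_{p}u^{i}$ with classical integration by parts in a neighborhood of $\partial\Omega$ where $u^{i}$ is smooth. Since $\mathbf{u}\in V$, we may fix $\delta>0$ with $\mathbf{u}\in V_{\delta}$; then $u^{i}$ is $p$-harmonic in $\Omega_{\delta}$ with $C^{1}$ boundary data $\varphi^{i}$, and by standard boundary regularity for the $p$-Laplacian it is of class $C^{1,\alpha}$ up to $\partial\Omega$ on $\overline{\Omega_{\delta/2}}$. In particular $A[u^{i}]$ is continuous up to $\partial\Omega$, so $A_{\nu}u^{i}$ makes pointwise sense there, and the measure $\mu^{i}$ is supported in the compact set $\{x\in\Omega:d(x)\geq\delta\}$, where $d(x):=\mathrm{dist}(x,\partial\Omega)$.

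I would then introduce a smooth family of cutoffs $\chi_{\epsilon}\in C_{c}^{\infty}(\Omega)$ approximating $1_{\Omega}$ and vanishing near $\partial\Omega$. Fix $\rho\in C^{\infty}(\mathbb{R})$ with $\rho\equiv 0$ on $(-\infty,1]$ and $\rho\equiv 1$ on $[2,\infty)$, and for $0<\epsilon<\delta/2$ set $\chi_{\epsilon}(x):=\rho(d(x)/\epsilon)$; this is smooth because $d$ is smooth in a tubular neighborhood of the smooth surface $\partial\Omega$. Then $\chi_{\epsilon}\psi_{i}\in C_{c}^{1}(\Omega)$ is admissible as a test function for $\mu^{i}$ by density, giving
\[
\int_{\Omega}\chi_{\epsilon}\psi_{i}\,d\mu^{i}=-\int_{\Omega}\chi_{\epsilon}\nabla\psi_{i}\cdot A[u^{i}]\,dx-\int_{\Omega}\psi_{i}\nabla\chi_{\epsilon}\cdot A[u^{i}]\,dx.
\]
Since $\chi_{\epsilon}\equiv 1$ on the support of $\mu^{i}$, the left-hand side already equals $\int_{\Omega}\psi_{i}\,d\mu^{i}$.

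The identity is then obtained by letting $\epsilon\to 0$. The first term on the right converges to $-\int_{\Omega}\nabla\psi_{i}\cdot A[u^{i}]\,dx$ by dominated convergence (using $|\chi_{\epsilon}|\leq 1$, boundedness of $\nabla\psi_{i}$, and $|A[u^{i}]|=|\nabla u^{i}|^{p-1}\in L^{p/(p-1)}(\Omega)$). The second term is concentrated in the thin strip $\{\epsilon\leq d\leq 2\epsilon\}$; writing $\nabla\chi_{\epsilon}=\epsilon^{-1}\rho'(d/\epsilon)\nabla d$ and invoking the coarea formula, it reduces to a one-parameter integral over the level sets $\Gamma_{t}:=\{d=t\}$ with $t\in[\epsilon,2\epsilon]$, and by continuity of $A[u^{i}]$ up to $\partial\Omega$ together with $\nabla d|_{\partial\Omega}=-\nu$ and $\int_{0}^{\infty}\rho'(s)\,ds=1$, it converges to $\int_{\partial\Omega}\psi_{i}A_{\nu}u^{i}\,d\sigma$. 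Rearranging produces \eqref{eq: by parts}. The main technical point is this last boundary-term limit, relying on the $C^{1}$-up-to-the-boundary regularity of $u^{i}$ and on the smoothness of $d$ in a tubular neighborhood of $\partial\Omega$, both available under the standing smoothness hypotheses on $\partial\Omega$ and $\varphi^{i}$.
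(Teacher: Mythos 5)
Your argument is correct and follows the same overall strategy as the paper's: multiply $\psi_{i}$ by a cutoff vanishing near $\partial\Omega$, feed the product into the distributional definition of $\Delta_{p}u^{i}$, and pass to the limit as the cutoff tends to $1$. The two places where you diverge are worth noting. First, you observe up front that $\mu^{i}$ is supported in $\{d\geq\delta\}$, so $\int_{\Omega}\chi_{\epsilon}\psi_{i}\,d\mu^{i}$ is already equal to $\int_{\Omega}\psi_{i}\,d\mu^{i}$ for all small $\epsilon$ and no limit is needed on that side; the paper instead passes to the limit $k\to\infty$ in $\int_{\Omega}\phi_{k}\psi_{i}\,\Delta_{p}u^{i}\,dx$. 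Second, to produce the boundary integral you split $\nabla(\chi_{\epsilon}\psi_{i})$ by the product rule and handle the singular term $\psi_{i}\nabla\chi_{\epsilon}\cdot A[u^{i}]$ via the coarea formula and the $C^{1}$-up-to-the-boundary regularity of $u^{i}$ on $\Omega_{\delta}$, whereas the paper splits the domain into $\Omega_{1/k}$ and $\tilde\Omega_{k}$ and invokes the Anzellotti integration-by-parts formula in the boundary annulus. Your route is a bit more self-contained, avoiding the Anzellotti reference, but the underlying estimate is the same: both proofs reduce the boundary contribution to an integral over a level set $\{d=t\}$ and let $t\to 0$ using continuity of $A[u^{i}]$ up to $\partial\Omega$. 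One small detail you should make explicit is that $\nabla d\to -\nu$ and $|\nabla d|=1$ in the tubular neighborhood; you use both, and the sign is what makes the boundary term come out with the correct orientation in \eqref{eq: by parts}.
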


\begin{proof}
Let $\phi_{k}\in C^{\infty}(\Omega)$ be such that $\phi_{k}\equiv1$ on
$\tilde{\Omega}_{k}:=\Omega-\Omega_{1/k}$ and $\phi_{k}\equiv0$ on $\partial \Omega$.
We know that $\mathbf{u}\in V_{\delta}$ for some $\delta$. Suppose
$k$ is large enough so that $1/k<\delta$, and thus $\Omega_{1/k}\subset \Omega_{\delta}$.
Then we have 
\begin{align*}
\int_{\Omega}\nabla(\phi_{k}\psi_{i})\cdot A[u^{i}]\,dx & =\int_{\Omega_{1/k}}\nabla(\phi_{k}\psi_{i})\cdot A[u^{i}]\,dx\\
 & \qquad\qquad+\int_{\tilde{\Omega}_{k}}\nabla(\underset{\equiv\,1}{\underset{\rotatebox[origin=c]{90}{\big\{}}{\phi_{k}}}\psi_{i})\cdot A[u^{i}]\,dx\\
 & =\int_{\Omega_{1/k}}\nabla(\phi_{k}\psi_{i})\cdot A[u^{i}]\,dx+\int_{\tilde{\Omega}_{k}}\nabla\psi_{i}\cdot A[u^{i}]\,dx.
\end{align*}
Now, noting that $\partial \Omega_{1/k}=\partial\tilde{\Omega}_{k}\cup\partial \Omega$,
and by using the integration by parts formula proved in \cite{anzellotti1983pairings},
we get 
\begin{align*}
\int_{\Omega_{1/k}}\nabla(\phi_{k}\psi_{i})\cdot A[u^{i}]\,dx & =\int_{\Omega_{1/k}}\nabla(\phi_{k}\psi_{i})\cdot A[u^{i}]+\phi_{k}\psi_{i}\negthickspace\negthickspace\underset{=\,0\textrm{ on }\Omega_{1/k}}{\underbrace{\Delta_{p}u^{i}}}\,dx\\
 & =-\int_{\partial\tilde{\Omega}_{k}}\underset{\equiv\,1}{\underset{\rotatebox[origin=c]{90}{\big\{}}{\phi_{k}}}\psi_{i}A_{\nu}u^{i}\,d\sigma+\int_{\partial \Omega}\underset{\equiv\,0}{\underset{\rotatebox[origin=c]{90}{\big\{}}{\phi_{k}}}\psi_{i}A_{\nu}u^{i}\,d\sigma\\
 & =-\int_{\partial\tilde{\Omega}_{k}}\psi_{i}A_{\nu}u^{i}\,d\sigma\underset{k\to\infty}{\longrightarrow}-\int_{\partial \Omega}\psi_{i}A_{\nu}u^{i}\,d\sigma.
\end{align*}
In addition, we have $\int_{\tilde{\Omega}_{k}}\nabla\psi_{i}\cdot A[u^{i}]\,dx\underset{k\to\infty}{\longrightarrow}\int_{\Omega}\nabla\psi_{i}\cdot A[u^{i}]\,dx$,
and 
\begin{align*}
\int_{\Omega}\nabla(\phi_{k}\psi_{i})\cdot A[u^{i}]\,dx & =-\int_{\Omega}\phi_{k}\psi_{i}\,\Delta_{p}u^{i}\,dx\\
 & \qquad\underset{k\to\infty}{\longrightarrow}-\int_{\Omega}\psi_{i}\Delta_{p}u^{i}\,dx,
\end{align*}
which together give the desired result.
\end{proof}
We can similarly show that 
\[
\int_{\Omega}\Delta_{p}u^{i}\,dx=\int_{\partial \Omega}A_{\nu}u^{i}\,d\sigma.
\]
In addition, note that $\int_{\Omega}u^{i}\Delta_{p}u^{i}\,dx$ is meaningful
(since $u^{i}-\varphi^{i}\in W_{0}^{1,p}$ while $\Delta_{p}u^{i}\in W^{-1,p/(p-1)}$
and $\varphi^{i}$ is continuous) and we can similarly show that 
\begin{equation}
\int_{\Omega}u^{i}\Delta_{p}u^{i}+|\nabla u^{i}|^{p}\,dx=\int_{\partial \Omega}\varphi^{i}A_{\nu}u^{i}\,d\sigma.\label{eq: by parts 2}
\end{equation}
Note that $u^{i}=\varphi^{i}$ on $\partial \Omega$.

\begin{lem}
\label{lem: L2 bd der}For $\mathbf{u}\in V$ we have 
\[
\sum_{i=1}^{m}\int_{\Omega}|\nabla u^{i}|^{p}\,dx\le C+C\int_{\partial \Omega}\sum_{i\le m}\psi_{i}(x)A_{\nu}u^{i}\,d\sigma.
\]
\end{lem}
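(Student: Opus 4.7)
The starting point is \eqref{eq: by parts 2}. Since $u^{i}\ge 0$ and $\Delta_{p}u^{i}\ge 0$ as a Radon measure, the term $\int_{\Omega}u^{i}\Delta_{p}u^{i}\,dx$ is nonnegative, so \eqref{eq: by parts 2} immediately yields
\[
\int_{\Omega}|\nabla u^{i}|^{p}\,dx\;\le\;\int_{\partial\Omega}\varphi^{i}A_{\nu}u^{i}\,d\sigma.
\]
It remains to replace the boundary weight $\varphi^{i}$ by the specific weight $\psi_{i}$ appearing in the lemma, up to additive and multiplicative constants. This is the main obstacle: the sign of $A_{\nu}u^{i}$ on $\partial\Omega$ is not known a priori, so the naive pointwise estimate $\varphi^{i}A_{\nu}u^{i}\le K_{i}\psi_{i}A_{\nu}u^{i}$ is unavailable. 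The idea is to carry the exchange out globally by a clever use of \eqref{eq: by parts}.

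\textbf{Exchange of weights.} Extend $\psi_{i}$ and $\varphi^{i}$ to $C^{1}$ functions $\tilde\psi_{i},\tilde\varphi^{i}$ on $\bar{\Omega}$, with $\tilde\psi_{i}$ bounded below by a positive constant; for instance, take $\tilde\psi_{i}$ to be the harmonic extension of $\psi_{i}$, which is positive by the maximum principle and smooth up to the boundary because $\partial\Omega$ is smooth and $\psi_{i}\in C^{1}$. Set
\[
K_{i}:=\max_{\bar{\Omega}}\frac{\tilde\varphi^{i}}{\tilde\psi_{i}},\qquad g_{i}:=K_{i}\tilde\psi_{i}-\tilde\varphi^{i},
\]
so that $g_{i}\in C^{1}(\bar{\Omega})$, $g_{i}\ge 0$ on $\bar{\Omega}$, and $g_{i}|_{\partial\Omega}=K_{i}\psi_{i}-\varphi^{i}$. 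Applying \eqref{eq: by parts} with test function $g_{i}$ gives
\[
\int_{\Omega}g_{i}\Delta_{p}u^{i}\,dx+\int_{\Omega}\nabla g_{i}\cdot A[u^{i}]\,dx=K_{i}\int_{\partial\Omega}\psi_{i}A_{\nu}u^{i}\,d\sigma-\int_{\partial\Omega}\varphi^{i}A_{\nu}u^{i}\,d\sigma.
\]
Crucially, the first term on the left is nonnegative, since $g_{i}\ge 0$ on $\bar\Omega$ and $\Delta_{p}u^{i}$ is a nonnegative Radon measure; this is exactly the point of choosing $g_{i}\ge 0$. Rearranging,
\[
\int_{\partial\Omega}\varphi^{i}A_{\nu}u^{i}\,d\sigma\;\le\;K_{i}\int_{\partial\Omega}\psi_{i}A_{\nu}u^{i}\,d\sigma-\int_{\Omega}\nabla g_{i}\cdot A[u^{i}]\,dx.
\]

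\textbf{Absorption and conclusion.} Using $|A[u^{i}]|=|\nabla u^{i}|^{p-1}$ and Young's inequality with a small parameter $\epsilon>0$,
\[
-\int_{\Omega}\nabla g_{i}\cdot A[u^{i}]\,dx\;\le\;\|\nabla g_{i}\|_{\infty}\int_{\Omega}|\nabla u^{i}|^{p-1}\,dx\;\le\;\epsilon\int_{\Omega}|\nabla u^{i}|^{p}\,dx+C_{\epsilon},
\]
where $C_{\epsilon}$ depends only on $g_{i}$, $|\Omega|$, and $\epsilon$, but not on $\mathbf{u}$. Chaining the three displayed inequalities and choosing $\epsilon=1/2$ absorbs the bad gradient term into the left-hand side, giving
\[
\int_{\Omega}|\nabla u^{i}|^{p}\,dx\;\le\;2K_{i}\int_{\partial\Omega}\psi_{i}A_{\nu}u^{i}\,d\sigma+\tilde C_{i}.
\]
Summing over $i=1,\dots,m$ delivers the stated bound.
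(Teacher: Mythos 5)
Your proof is correct, and it takes a genuinely different route from the paper's, though both ultimately rest on the same ingredients: the integration-by-parts identities \eqref{eq: by parts} and \eqref{eq: by parts 2}, the nonnegativity of $u^{i}$ and of the measure $\Delta_{p}u^{i}$, and Young's inequality. The paper introduces the auxiliary $p$-harmonic function $\mathbf{h}_{0}$ with boundary data $\boldsymbol{\varphi}$, plugs $h_{0}^{i}$ into \eqref{eq: by parts}, subtracts \eqref{eq: by parts 2}, and after Young's inequality arrives at the intermediate total-mass bound $\int_{\Omega}|\nabla u^{i}|^{p}\,dx\le C\int_{\Omega}\Delta_{p}u^{i}\,dx$; it then converts to the weighted mass $\int_{\Omega}\psi_{i}\Delta_{p}u^{i}$ and finally applies \eqref{eq: by parts} once more (with a second Young step) to reach the boundary integral. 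You instead drop the $\int u^{i}\Delta_{p}u^{i}\ge 0$ term immediately in \eqref{eq: by parts 2}, so the task reduces to exchanging the boundary weight $\varphi^{i}$ for $\psi_{i}$, and you achieve this with a single, well-chosen application of \eqref{eq: by parts}: the test function $g_{i}=K_{i}\tilde\psi_{i}-\tilde\varphi^{i}\ge 0$ makes the unwanted measure term $\int g_{i}\Delta_{p}u^{i}\,dx$ nonnegative and hence discardable, and one Young step finishes. This is shorter and avoids $\mathbf{h}_{0}$ entirely, and your opening observation that the sign of $A_{\nu}u^{i}$ is unknown a priori (so a pointwise weight comparison fails) is exactly the right motivation for doing the exchange at the integral level. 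What the paper's route buys in exchange is the intermediate estimate $\int_{\Omega}\Delta_{p}u^{i}\,dx\le C+C\int_{\partial\Omega}\psi_{i}A_{\nu}u^{i}\,d\sigma$, recorded in the remark after the lemma and used again in Lemma \ref{lem: q<C}; your proof does not produce that by-product directly, though it could be recovered with one extra step. One small point to tidy up: the harmonic extension of a merely $C^{1}$ boundary function is not automatically $C^{1}$ up to a smooth boundary (one would want $C^{1,\alpha}$ data for that via Schauder), but this is immaterial — any positive $C^{1}(\overline{\Omega})$ extension of $\psi_{i}$ and any $C^{1}(\overline{\Omega})$ extension of $\varphi^{i}$ will do, and the paper itself tacitly extends $\psi_{i}$ to $\overline{\Omega}$ when it writes $C_{i}=\max_{\overline{\Omega}}1/\psi_{i}$.
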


\begin{rem*}
As we will see, the above inequality actually holds for each summand.
Furthermore, with a slight modification of the last part of the proof
we obtain that 
\[
\int_{\Omega}\Delta_{p}u^{i}\,dx\le C+C\int_{\partial \Omega}\psi_{i}(x)A_{\nu}u^{i}\,d\sigma.
\]
\end{rem*}
\begin{proof}
Let $\mathbf{h}_{0}$ be the vector-valued function in $\Omega$ satisfying
$\Delta_{p}h_{0}^{i}=0$, and taking the boundary values $\boldsymbol{\varphi}$
on $\partial \Omega$. Note that $h_{0}^{i}$ is $C^{1}$ and we can plug
it in \eqref{eq: by parts}. By subtracting the resulting relation
from \eqref{eq: by parts 2} we get 
\[
\int_{\Omega}(u^{i}-h_{0}^{i})\Delta_{p}u^{i}\,dx+\int_{\Omega}\nabla(u^{i}-h_{0}^{i})\cdot A[u^{i}]\,dx=0.
\]
Hence 
\begin{align*}
\int_{\Omega}|\nabla u^{i}|^{p}\,dx & =\int_{\Omega}\nabla u^{i}\cdot A[u^{i}]\,dx=\int_{\Omega}(h_{0}^{i}-u^{i})\Delta_{p}u^{i}\,dx+\int_{\Omega}\nabla h_{0}^{i}\cdot A[u^{i}]\,dx\\
 & \le\int_{\Omega}h_{0}^{i}\Delta_{p}u^{i}\,dx+C\int_{\Omega}|\nabla h_{0}^{i}|^{p}\,dx+\frac{1}{2}\int_{\Omega}|A[u^{i}]|^{\frac{p}{p-1}}\,dx\\
 & \le C\int_{\Omega}\Delta_{p}u^{i}\,dx+C+\frac{1}{2}\int_{\Omega}|\nabla u^{i}|^{p}\,dx,
\end{align*}
where we have used the facts that $u^{i},\Delta_{p}u^{i}\ge0$ and
$|A[u^{i}]|^{\frac{p}{p-1}}=|\nabla u^{i}|^{p}$. Thus we have 
\[
\int_{\Omega}|\nabla u^{i}|^{p}\,dx\le C\int_{\Omega}\Delta_{p}u^{i}\,dx.
\]
But since $\psi_{i},\Delta_{p}u^{i}\ge0$ we get 
\[
\int_{\Omega}|\nabla u^{i}|^{p}\,dx\le C\int_{\Omega}\Delta_{p}u^{i}\,dx\le CC_{i}\int_{\Omega}\psi_{i}\,\Delta_{p}u^{i}\,dx,
\]
where $C_{i}=\max_{\overline{\Omega}}\frac{1}{\psi_{i}}>0$. Hence by \eqref{eq: by parts}
we get 
\begin{align*}
\int_{\Omega}|\nabla u^{i}|^{p}\,dx & \le C\int_{\Omega}\psi_{i}\,\Delta_{p}u^{i}\,dx\\
 & =-C\int_{\Omega}\nabla\psi_{i}\cdot A[u^{i}]\,dx+C\int_{\partial \Omega}\psi_{i}A_{\nu}u^{i}\,d\sigma\\
 & \le\tilde{C}\int_{\Omega}|\nabla\psi_{i}|^{p}\,dx+\frac{1}{2}\int_{\Omega}|A[u^{i}]|^{\frac{p}{p-1}}\,dx+C\int_{\partial \Omega}\psi_{i}A_{\nu}u^{i}\,d\sigma\\
 & \le C+\frac{1}{2}\int_{\Omega}|\nabla u^{i}|^{p}\,dx+C\int_{\partial \Omega}\psi_{i}A_{\nu}u^{i}\,d\sigma,
\end{align*}
which gives the desired. 
\end{proof}
\begin{thm}
There exists a minimizer $\mathbf{u}_{\varepsilon}^{\delta}\in V_{\delta}$
for $J_{\varepsilon}$. 
\end{thm}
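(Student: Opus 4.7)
The plan is the direct method of the calculus of variations. I first check that $V_\delta\ne\emptyset$ and $J_\varepsilon$ is bounded below on it: the componentwise $p$-harmonic extension of $\boldsymbol{\varphi}$ into $\Omega$ lies in $V_\delta$ (components positive by the strong maximum principle, so trivially subsolutions), and rearranging Lemma \ref{lem: L2 bd der} forces $\int_{\partial\Omega}\sum_i\psi_i A_\nu u^i\,d\sigma\ge -C$ for every $\mathbf{u}\in V$; combined with \eqref{eq: Gamma coercive} and $f_\varepsilon\ge 1-\varepsilon$, this gives $J_\varepsilon\ge -C'$.

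Take a minimizing sequence $\{\mathbf{u}_k\}\subset V_\delta$. Lemma \ref{lem: L2 bd der} applied with the bound $J_\varepsilon(\mathbf{u}_k)\le C$ yields a uniform $W^{1,p}$ estimate, so after passing to a subsequence $\mathbf{u}_k\rightharpoonup \mathbf{u}_\varepsilon^\delta$ weakly in $W^{1,p}$, with strong $L^p$ and a.e.\ convergence by Rellich--Kondrachov. The pointwise constraint $u^i\ge 0$ and the Dirichlet condition $u^i=\varphi^i$ on $\partial\Omega$ pass to the limit via a.e.\ convergence and trace continuity, so the main task is transferring the two PDE-type constraints.

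For $p$-harmonicity in $\Omega_\delta$: since $\Delta_p u_k^i=0$ in $\Omega_\delta$ with $C^1$ data $\varphi^i$ on the smooth $\partial\Omega$, the interior $C^{1,\alpha}$ regularity of DiBenedetto/Tolksdorf together with Lieberman's boundary regularity up to $\partial\Omega$ give uniform $C^{1,\alpha}$ bounds on $\overline{\Omega_{\delta'}}$ for every $\delta'<\delta$; Arzel\`a--Ascoli then upgrades weak convergence to $C^1$ convergence on $\overline{\Omega_{\delta'}}$, and passing to the weak formulation yields $\Delta_p u_\varepsilon^{\delta,i}=0$ on $\Omega_{\delta'}$, hence on $\Omega_\delta$. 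The global subharmonicity $\Delta_p u_\varepsilon^{\delta,i}\ge 0$ in $\Omega$ is where I expect the main difficulty, since away from $\partial\Omega$ we have no uniform regularity on $\{\mathbf{u}_k\}$. The remark after Lemma \ref{lem: L2 bd der} gives a uniform total-mass bound on the nonnegative measures $\Delta_p u_k^i$, so along a subsequence they converge weakly to a nonnegative Radon measure $\mu^i$. By reflexivity $A[u_k^i]\rightharpoonup \chi^i$ in $L^{p/(p-1)}$, and $\mathrm{div}\,\chi^i=\mu^i\ge 0$ in the distributional sense; the Minty monotonicity trick for the $p$-Laplacian nonlinearity $A$ then identifies $\chi^i = A[u_\varepsilon^{\delta,i}]$, yielding $\Delta_p u_\varepsilon^{\delta,i}=\mu^i\ge 0$.

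Finally, weak lower semicontinuity of $J_\varepsilon$ follows readily. The $C^1$ convergence up to $\partial\Omega$ gives $A_\nu u_k^i\to A_\nu u_\varepsilon^{\delta,i}$ uniformly on $\partial\Omega$, so continuity of $\Gamma$ upgrades the boundary integral to full convergence along the sequence. For the penalty, a.e.\ convergence and Fatou applied to the indicators give $|\{|\mathbf{u}_\varepsilon^\delta|>0\}|\le\liminf_k|\{|\mathbf{u}_k|>0\}|$, and since $f_\varepsilon$ is monotone increasing and continuous this lifts to lsc of the penalty term. Combining, $J_\varepsilon(\mathbf{u}_\varepsilon^\delta)\le\liminf_k J_\varepsilon(\mathbf{u}_k)=\inf_{V_\delta}J_\varepsilon$, so $\mathbf{u}_\varepsilon^\delta$ minimizes $J_\varepsilon$ over $V_\delta$.
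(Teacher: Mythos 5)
Your proof follows the same direct-method skeleton as the paper: coercivity from Lemma \ref{lem: L2 bd der} and \eqref{eq: Gamma coercive}, weak $W^{1,p}$ compactness of a minimizing sequence, and passage of the constraints to the limit. The place where you genuinely diverge is the treatment of the boundary flux. You exploit up-to-the-boundary $C^{1,\alpha}$ estimates for $p$-harmonic functions in $\Omega_\delta$ to upgrade weak convergence to $C^1$ convergence on $\overline{\Omega_{\delta'}}$ for $\delta'<\delta$; this makes $\int_{\partial\Omega}\Gamma(x,A_\nu\mathbf{u}_k)\,d\sigma$ actually \emph{converge}, so for this term you never invoke the convexity of $\Gamma$. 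The paper instead argues weak lower semicontinuity of the boundary term as in Lemma 3.3 of \cite{teixeira2005nonlinear}, which does use convexity. Your route is cleaner at this step and yields a slightly stronger statement (convergence, not just lsc, of the flux integral).

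The one place I would push back is the identification step for $\Delta_p u^i\ge 0$. You note correctly that $A[u_k^i]\rightharpoonup\chi^i$ in $L^q$ and $\mathrm{div}\,\chi^i=\mu^i\ge 0$, and invoke Minty's trick to conclude $\chi^i=A[\nabla u^i]$. The textbook Minty argument, however, wants the right-hand sides $\Delta_p u_k^i$ to converge strongly in $W^{-1,q}$, or some other way of controlling $\limsup_k\int A[u_k^i]\cdot\nabla(u_k^i-u^i)\,dx$; here $\mu_k^i=\Delta_p u_k^i$ are only nonnegative measures with uniformly bounded total mass, and $u_k^i-u^i\in W_0^{1,p}$ is not an obviously admissible test function against $\mu_k^i$. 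Localizing with a cutoff $\zeta\ge 0$ produces a residual term $\int\zeta\,u_k^i\,d\mu_k^i$ of the wrong sign to be discarded once you take $w=u^i-t\phi$ and send $t\to 0$. The conclusion is nevertheless true, but the tool that closes it is rather a Boccardo--Murat type theorem on almost-everywhere convergence of gradients: since the $\mu_k^i$ are measures bounded in total variation and $u_k^i$ is bounded in $W^{1,p}$, one obtains $\nabla u_k^i\to\nabla u^i$ a.e.\ along a subsequence, hence $A[\nabla u_k^i]\to A[\nabla u^i]$ a.e., which together with the uniform $L^q$ bound identifies the weak limit. The paper glosses over exactly the same point (it simply asserts $A[u_k^i]\rightharpoonup A[u^i]$ and defers to \cite{teixeira2005nonlinear}), so this is a shared subtlety rather than a defect unique to your write-up; but the reason the identification works deserves to be named.
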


\begin{proof}
Let $\{\mathbf{u}_{k}\}\subset V_{\delta}$ be a minimizing sequence.
Then by the above lemma and \eqref{eq: Gamma coercive} we have 
\begin{align*}
\sum_{i=1}^{m}\int_{\Omega}|\nabla u_{k}^{i}|^{p}\,dx & \le C+C\int_{\partial \Omega}\sum_{i\le m}\psi_{i}A_{\nu}u_{k}^{i}\,d\sigma\\
 & \le C+C\int_{\partial \Omega}\Gamma\big(x,A_{\nu}u_{k}^{1},\dots,A_{\nu}u_{k}^{m}\big)+C\,d\sigma\\
 & \le C+CJ_{\varepsilon}(\mathbf{u}_{k}).
\end{align*}
Hence $\|\nabla\mathbf{u}_{k}\|_{L^{p}}$ is bounded. In addition,
for the dual exponent $q=\frac{p}{p-1}$ we can see that $\|A[\mathbf{u}_{k}]\|_{L^{q}}=\|\nabla\mathbf{u}_{k}\|_{L^{p}}^{p-1}$
is also bounded. Hence, up to a subsequence, we can assume that $\nabla u_{k}^{i}\rightharpoonup\nabla u^{i}$
in $L^{p}$, $A[u_{k}^{i}]\rightharpoonup A[u^{i}]$ in $L^{q}$,
and $\mathbf{u}_{k}\to\mathbf{u}$ a.e in $\Omega$. Thus $u^{i}\ge0$.
Also, $u^{i}=\varphi^{i}$ on $\partial \Omega$, since $u_{k}^{i}-\varphi^{i}\in W_{0}^{1,p}(\Omega)$,
which is a closed and convex set, hence weakly closed.%
{} Finally, to see that $\Delta_{p}u^{i}$ has the desired properties,
notice that for an appropriate test function $\phi$ we have 
\[
\int_{\Omega}\nabla\phi\cdot A[u^{i}]\,dx=\lim_{k\to\infty}\int_{\Omega}\nabla\phi\cdot A[u_{k}^{i}]\,dx
\]
due to the weak convergence of $A[\mathbf{u}_{k}]$. Therefore $\mathbf{u}\in V_{\delta}$.
Now we can repeat the proof of Lemma 3.3 in \cite{teixeira2005nonlinear}
to deduce the weak lower semicontinuity of $J_{\varepsilon}$ with
respect to this sequence, and conclude the proof (the convexity of $\Gamma$ is needed here). 
\end{proof}

\begin{lem}[\textbf{Hopf's lemma for $p$-harmonic functions}]
\label{lem: Hopf}Suppose $h$ is a $p$-harmonic function on $B_{1}(0)$ with nonnegative
boundary values on $\partial B_{1}$. Then we have 
\[
h(x)\ge c(n,p)\,\mathrm{dist}(x,\partial B_{1})\sup_{B_{1/2}}h.
\]
\end{lem}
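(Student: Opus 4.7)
The plan is to combine Harnack's inequality with an explicit radial barrier and the $p$-Laplacian comparison principle, which is the classical Hopf argument adapted to the quasilinear setting. We may assume $M := \sup_{B_{1/2}} h > 0$, else the estimate is trivial; the weak minimum principle for the $p$-Laplacian then gives $h \geq 0$ throughout $B_1$. Set $m := \inf_{B_{1/2}} h$. Since $h$ is $p$-harmonic, its minimum on $\overline{B_{1/2}}$ is attained on $\partial B_{1/2}$, and Harnack's inequality for nonnegative $p$-harmonic functions produces a constant $C_H = C_H(n,p)$ with $M \leq C_H\, m$.

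For $x \in B_{1/2}$, since $\mathrm{dist}(x,\partial B_1) \leq 1$, one simply has
\[
h(x) \geq m \geq C_H^{-1} M \geq C_H^{-1} M\, \mathrm{dist}(x,\partial B_1),
\]
so the substantive case is $x$ in the annulus $A := B_1 \setminus \overline{B_{1/2}}$. There I would use as barrier the radial $p$-harmonic function $w(x) = g(|x|)$ on $A$ obtained by integrating the ODE $(r^{n-1}|g'|^{p-2} g')' = 0$ with boundary conditions $g(1) = 0$ and $g(1/2) = m$. Explicitly,
\[
g(r) =
\begin{cases}
B\bigl(r^{(p-n)/(p-1)} - 1\bigr), & p \neq n,\\[2pt]
-\dfrac{m}{\log 2}\, \log r, & p = n,
\end{cases}
\]
with $B = B(n,p,m)$ determined by the boundary condition. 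In each case $g$ is smooth and strictly decreasing on $[1/2, 1]$ with $g(1) = 0$, and $|g'(r)|$ is bounded below on $[1/2, 1]$ by a positive constant of the form $c(n,p)\, m$; therefore $g(r) = \int_r^1 |g'(s)|\, ds \geq c(n,p)\, m\, (1 - r)$ for $r \in [1/2, 1]$.

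Because $h \geq m = w$ on $\partial B_{1/2}$ and $h \geq 0 = w$ on $\partial B_1$, and both $h$ and $w$ are $p$-harmonic in $A$, the $p$-Laplacian comparison principle gives $h \geq w$ in $A$, whence $h(x) \geq c(n,p)\, m\, \mathrm{dist}(x, \partial B_1) \geq c(n,p)\, C_H^{-1}\, M\, \mathrm{dist}(x, \partial B_1)$. Combined with the estimate on $B_{1/2}$ this yields the claim. The genuine analytic inputs are the Harnack inequality and the strong comparison principle for the $p$-Laplacian, both classical (Serrin, Trudinger, Tolksdorf); the one bookkeeping subtlety---and the place where the argument departs from the linear setting---is the separate treatment of the borderline exponent $p = n$, where the explicit barrier must be taken logarithmic rather than a power.
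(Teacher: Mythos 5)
Your proof is correct, and the overall architecture matches the paper's: establish $h \ge 0$ by the weak minimum principle, run a comparison against a radial barrier in the annulus $B_1 \setminus \overline{B}_{1/2}$ to get a linear lower bound near $\partial B_1$, and then use the Harnack inequality on $B_{1/2}$ to replace $\inf_{B_{1/2}} h$ by $c\,\sup_{B_{1/2}} h$. Where you diverge is in the choice of barrier. The paper uses the exponential barrier $g(x) = e^{-\lambda |x|^2} - e^{-\lambda}$ (the classical Oleinik/Hopf device), computes $\Delta_p g$ explicitly, and picks $\lambda$ large so that $g$ is a strict $p$-\emph{subsolution} on the annulus; the comparison is then subsolution-versus-solution, and the same barrier works uniformly for every $p \in (1,\infty)$ without any case split. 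You instead take the \emph{exact} radial $p$-harmonic function for the annulus (the power $r^{(p-n)/(p-1)}$, or $\log r$ when $p=n$), so the comparison is the plainest solution-versus-solution form and there is no need to verify sign of $\Delta_p g$ at all; the cost is the bookkeeping split at the borderline exponent $p=n$, which the paper's exponential barrier sidesteps. Both are standard and correct; the exponential barrier generalizes more readily to variable-coefficient or non-radial operators, while yours is the cleanest possible for the $p$-Laplacian itself. One small point worth recording: your separate treatment of $x \in B_{1/2}$ is unnecessary if, as in the paper, you note that the barrier satisfies $0 < g < 1$ there, so the dominance $h \ge m \ge m\,g$ on $\overline{B}_{1/2}$ already holds, letting the comparison conclusion $h \ge m\,g$ extend to all of $B_1$ in one stroke.
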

\begin{proof}
Consider the function $g(x)=e^{-\lambda|x|^{2}}-e^{-\lambda}$ for some $\lambda>0$. Note
that $g=0$ on $\partial B_{1}$, and $0<g<1$ on $B_{1}$. We also
have 
\[
\partial_{i}g=-2\lambda x_{i}e^{-\lambda|x|^{2}},\qquad \partial_{ij}g=(4\lambda^{2}x_{i}x_{j}-2\lambda\delta_{ij})e^{-\lambda|x|^{2}}.
\]
Now we have $\Delta g=(4\lambda^{2}|x|^{2}-2n\lambda)e^{-\lambda|x|^{2}}$,
and 
\begin{align*}
\Delta_{\infty}g:=\sum_{i,j}\partial_{i}g\partial_{j}g\partial_{ij}g & =\sum_{i,j}4\lambda^{2}(4\lambda^{2}x_{i}^{2}x_{j}^{2}-2\lambda\delta_{ij}x_{i}x_{j})e^{-3\lambda|x|^{2}}\\
 & =4\lambda^{2}(4\lambda^{2}|x|^{4}-2\lambda|x|^{2})e^{-3\lambda|x|^{2}}.
\end{align*}
Therefore 
\begin{align*}
\Delta_{p}g & =\mathrm{div}(|\nabla g|^{p-2}\nabla g)=|\nabla g|^{p-4}\big(|\nabla g|^{2}\Delta g+(p-2)\Delta_{\infty}g\big)\\
 & =(2\lambda|x|)^{p-4}\big(4\lambda^{2}|x|^{2}(4\lambda^{2}|x|^{2}-2n\lambda)\\
 & \qquad\qquad\qquad\qquad+(p-2)4\lambda^{2}(4\lambda^{2}|x|^{4}-2\lambda|x|^{2})\big)e^{-(p-1)\lambda|x|^{2}}\\
 & =(2\lambda)^{p-1}|x|^{p-2}\big(2\lambda|x|^{2}-n+(p-2)(2\lambda|x|^{2}-1)\big)e^{-(p-1)\lambda|x|^{2}}\\
 & =(2\lambda)^{p-1}|x|^{p-2}\big(2(p-1)\lambda|x|^{2}-n-p+2\big)e^{-(p-1)\lambda|x|^{2}}.
\end{align*}
Thus for $\frac{1}{2}\le|x|\le1$ and large enough $\lambda$ we have
\[
\Delta_{p}g\ge2\lambda^{p-1}\big((p-1)\lambda/2-n-p+2\big)e^{-(p-1)\lambda}>0.
\]

Now we have $h\ge\inf_{\overline{B}_{1/2}}h>(\inf_{\overline{B}_{1/2}}h)g$
on $\overline{B}_{1/2}$ (note that $h$ is positive on $B_{1}$ by
maximum principle), and on $B_{1}-\overline{B}_{1/2}$ we have $\Delta_{p}h=0<\Delta_{p}g$.
Also on $\partial B_{1}$ we have $h\ge0=g$. Hence by the maximum
principle we have 
$h(x)\ge g(x)(\inf_{\overline{B}_{1/2}}h)$
for $x\in B_{1}$. But by the Harnack's inequality we have 
\[
\inf_{\overline{B}_{1/2}}h\ge C\sup_{\overline{B}_{1/2}}h
\]
for some constant $C$ which does not depend on $h$. Hence we obtain
\[
h(x)\ge Cg(x)\sup_{\overline{B}_{1/2}}h.
\]
On the other hand note that 
\begin{align*}
g(x) & =g(x)-g(x/|x|)=\int_{1/|x|}^{1}\frac{d}{dt}g(tx)\,dt=\int_{1/|x|}^{1}x\cdot\nabla g(tx)\,dt\\
 & =\int_{1/|x|}^{1}-2\lambda t|x|^{2}e^{-\lambda t^{2}|x|^{2}}\,dt=2\lambda|x|^{2}\int_{1}^{1/|x|}te^{-\lambda t^{2}|x|^{2}}\,dt\\
 & \ge2\lambda|x|^{2}\int_{1}^{1/|x|}te^{-\lambda}\,dt=\lambda e^{-\lambda}|x|^{2}\Big(\frac{1}{|x|^{2}}-1\Big)=\lambda e^{-\lambda}(1-|x|^{2})\\
 & \ge\lambda e^{-\lambda}(1-|x|)=\lambda e^{-\lambda}\,\mathrm{dist}(x,\partial B_{1}),
\end{align*}
which gives the desired. 
\end{proof}
If $h$ is a $p$-harmonic function on $B_{r}(x_{0})$, then $\tilde{h}(x):=h(x_{0}+rx)$
is a $p$-harmonic function on $B_{1}(0)$. Hence we have 
\begin{align*}
h(x_{0}+rx)&=\tilde{h}(x)  \ge c(n,p)\,\mathrm{dist}(x,\partial B_{1})\sup_{B_{1/2}}\tilde{h}\\
 & =c(n,p)\,(1-|x|)\sup_{B_{1/2}}\tilde{h} =c(n,p)\,\frac{(r-r|x|)}{r}\sup_{B_{r/2}(x_{0})}h\\
 & =c(n,p)\,\mathrm{dist}\big(x_{0}+rx,\partial B_{r}(x_{0})\big)\frac{1}{r}\sup_{B_{r/2}(x_{0})}h.
\end{align*}

\begin{lem}
\label{lem: |u is 0| bdd}Let $w\in W^{1,p}(\Omega)$ be a nonnegative %
function. Then there exists $c>0$, depending only on $p$ and the
dimension, such that for any ball $\overline{B}_{r}(x_{0})\subset \Omega$
we have 
\[
\Big(\frac{1}{r}\sup_{B_{r/2}(x_{0})}h\Big)^{p}\cdot\big|B_{r}(x_{0})\cap\{w=0\}\big|\le c\int_{B_{r}(x_{0})}|\nabla(w-h)|^{p}\,dy,
\]
where $h$ satisfies $\Delta_{p}h=0$ in $B_{r}(x_{0})$ taking boundary
values equal to $w$ on $\partial B_{r}(x_{0})$. 
\end{lem}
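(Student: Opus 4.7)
The plan is to combine the Hopf-type lower bound on $h$ derived immediately after Lemma \ref{lem: Hopf} with the classical Hardy inequality on the ball. Since $w=h$ on $\partial B_{r}(x_{0})$ as traces, the difference $v:=w-h$ lies in $W^{1,p}_{0}(B_{r}(x_{0}))$, so Hardy's inequality (valid for any $p>1$ on a ball, with a scale-invariant constant depending only on $n$ and $p$) gives
\[
\int_{B_{r}(x_{0})}\frac{|v(y)|^{p}}{\mathrm{dist}(y,\partial B_{r}(x_{0}))^{p}}\,dy\le C(n,p)\int_{B_{r}(x_{0})}|\nabla v|^{p}\,dy.
\]
On the set $A:=B_{r}(x_{0})\cap\{w=0\}$ one has $v=-h$, so $|v|=h$ pointwise.

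Next I would invoke the rescaled form of Hopf's lemma established right after Lemma \ref{lem: Hopf}: since $h$ is $p$-harmonic in $B_{r}(x_{0})$ with nonnegative boundary trace $w\ge 0$ (so by the maximum principle $h\ge 0$ in $B_{r}(x_{0})$), for every $y\in B_{r}(x_{0})$
\[
h(y)\ge c(n,p)\,\mathrm{dist}(y,\partial B_{r}(x_{0}))\cdot\frac{1}{r}\sup_{B_{r/2}(x_{0})}h.
\]
Dividing by $\mathrm{dist}(y,\partial B_{r}(x_{0}))$ and raising to the $p$-th power yields, pointwise on $A$,
\[
\frac{|v(y)|^{p}}{\mathrm{dist}(y,\partial B_{r}(x_{0}))^{p}}\ge c(n,p)^{p}\Big(\frac{1}{r}\sup_{B_{r/2}(x_{0})}h\Big)^{p}.
\]

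Integrating this inequality over $A$ and chaining with Hardy gives
\[
c(n,p)^{p}\Big(\frac{1}{r}\sup_{B_{r/2}(x_{0})}h\Big)^{p}|A|\le\int_{A}\frac{|v|^{p}}{\mathrm{dist}(\cdot,\partial B_{r}(x_{0}))^{p}}\,dy\le C(n,p)\int_{B_{r}(x_{0})}|\nabla v|^{p}\,dy,
\]
which is exactly the claim after absorbing constants. No serious technical obstacle is anticipated: the scalings of Hardy's inequality and of the Hopf bound under $y\mapsto x_{0}+ry$ match, which is precisely why the factor $1/r$ appears on the left and a clean $L^{p}$-norm of $\nabla(w-h)$ appears on the right with no additional powers of $r$. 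The one subtlety worth emphasizing is that a naive Poincar\'e inequality only controls $|A\cap B_{r/2}(x_{0})|$ in terms of $\int|\nabla(w-h)|^{p}$; it is the \emph{weighted} Hardy inequality, with distance to $\partial B_{r}(x_{0})$ in the denominator, that upgrades this to the full Lebesgue mass $|B_{r}(x_{0})\cap\{w=0\}|$ and thus yields the stated estimate.
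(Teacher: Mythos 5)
Your argument is correct, and it takes a genuinely different route from the paper. The paper's proof parametrizes $B_r(x_0)$ by rays from the center: for each direction $\xi$ it tracks the first time $t_\xi\in[\tau r,r]$ at which $w$ vanishes along the ray, estimates $h(x_0+t_\xi\xi)$ from above by the fundamental theorem of calculus along the ray for $w-h$ together with H\"older's inequality, compares with the pointwise Hopf lower bound at that same point, integrates over $\xi$, and finally lets $\tau\to 0$. You replace this entire ray decomposition with a single invocation of the scale-invariant Hardy inequality on $W^{1,p}_0(B_r(x_0))$ (note that $w-h$ does lie in this space, since $w$ and $h$ have the same trace on $\partial B_r(x_0)$), followed by the same pointwise Hopf estimate but now applied on the set $\{w=0\}$. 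Both pieces in your argument match the needed scaling exactly: Hardy's constant on balls depends only on $n,p$, and the Hopf bound produces the factor $\frac{1}{r}\sup_{B_{r/2}}h$, so the powers of $r$ cancel precisely. What your approach buys is brevity and robustness: the measurability and absolute-continuity bookkeeping for the hitting time $t_\xi$, the Jacobian factor $t^{n-1}$ in the polar-coordinate integration, and the final limit $\tau\to 0$ all disappear, because the Hardy inequality already packages the one-dimensional ray estimate into a finished statement. What the paper's approach buys is self-containedness: it uses only H\"older and the fundamental theorem of calculus rather than a named functional inequality. One minor point worth stating explicitly in a clean write-up: $v=w-h$ and the set $\{w=0\}$ are only defined up to Lebesgue null sets, which suffices since both sides of the target inequality are invariant under modification on null sets.
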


\begin{proof}
Let $\tau\in(0,1)$ be fixed. For $\xi$ with $|\xi|=1$ we set 
\[
t_{\xi}:=\inf\{t\in[\tau r,r]:w(x_{0}+t\xi)=0\}
\]
provided that this set is nonempty. Otherwise we set $t_{\xi}:=r$.
Now note that $w-h$ and $w$ are absolutely continuous in almost
every direction $\xi$; in particular we have $w(x_{0}+t_{\xi}\xi)=0$
(note that this will not be necessary true if we allow $\tau$ to
be zero). Also $w-h$ is $\mathcal{H}^{n-1}$-a.e. zero on $\partial B_{r}(x_{0})$
as its trace is zero there, so $(w-h)(x_{0}+r\xi)=0$. Thus for almost
every $\xi$ for which $t_{\xi}<r$ we have
\begin{align*}
h(x_{0}+t_{\xi}\xi) & =(w-h)(x_{0}+r\xi)-(w-h)(x_{0}+t_{\xi}\xi)\\
 & =\int_{t_{\xi}}^{r}\frac{d}{dt}\big((w-h)(x_{0}+t\xi)\big)\,dt\\
 & =\int_{t_{\xi}}^{r}\nabla_{\xi}(w-h)(x_{0}+t\xi)\,dt\\
 & \le(r-t_{\xi})^{\frac{p-1}{p}}\Big(\int_{t_{\xi}}^{r}\big|\nabla(w-h)(x_{0}+t\xi)\big|^{p}\,dt\Big)^{\frac{1}{p}}.
\end{align*}
On the other hand, using Hopf's lemma we get 
\begin{align*}
    h(x_{0}+t_{\xi}\xi) & \ge c(n,p)\,\mathrm{dist}\big(x_{0}+t_{\xi}\xi,\,\partial B_{r}(x_{0})\big)\frac{1}{r}\sup_{B_{r/2}(x_{0})}h \\ & =c(n,p)(r-t_{\xi})\frac{1}{r}\sup_{B_{r/2}(x_{0})}h.
\end{align*}

Hence we obtain 
\[
(r-t_{\xi})\Big(\frac{1}{r}\sup_{B_{r/2}(x_{0})}h\Big)^{p}\le C(n,p)\int_{t_{\xi}}^{r}\big|\nabla(w-h)(x_{0}+t\xi)\big|^{p}\,dt.
\]
Note that this inequality is trivially satisfied if $t_{\xi}=r$.
Now by integrating over $\xi$ we get 
\begin{align*}
 & \hspace{-1cm}C(n,p)\int_{B_{r}(x_{0})}\big|\nabla(w-h)(x)\big|^{p}\,dx\\
 & \ge C(n,p)\int_{\partial B_{1}(0)}\int_{t_{\xi}}^{r}\big|\nabla(w-h)(x_{0}+t\xi)\big|^{p}\,dtd\xi\\
 & \ge\Big(\frac{1}{r}\sup_{B_{r/2}(x_{0})}h\Big)^{p}\int_{\partial B_{1}(0)}(r-t_{\xi})\,d\xi \allowdisplaybreaks \\
 & =\Big(\frac{1}{r}\sup_{B_{r/2}(x_{0})}h\Big)^{p}\int_{\partial B_{1}(0)}\int_{t_{\xi}}^{r}1\,dtd\xi\\
 & \ge\Big(\frac{1}{r}\sup_{B_{r/2}(x_{0})}h\Big)^{p}\int_{B_{r}(x_{0})-B_{\tau r}(x_{0})}\chi_{\{w=0\}}\,dx,
\end{align*}
where the last inequality follows from the definition of $t_{\xi}$.
Finally, we get the desired by letting $\tau\to0$. 
\end{proof}

\begin{lem}
\label{lem: aux obst pr} Let $\mathbf{u}=\mathbf{u}_{\varepsilon}^{\delta}$
be a minimizer of $J_{\varepsilon}$ over $V_{\delta}$, and $B\subset \Omega$
be a ball. Then there exists a unique $v^{i}\in W^{1,p}(\Omega)$ that
minimizes the functional 
\[
\int_{\Omega}|\nabla v^{i}|^{p}\,dx
\]
among all functions with $v^{i}=\varphi^{i}$ on $\partial \Omega$ and
$v^{i}\le0$ on $\{u^{i}=0\}-B$. The functions $v^{i}$ also satisfy 
\begin{enumerate}
\item $v^{i}=0$ on $\{u^{i}=0\}-B$, 
\item $\mathbf{v}=(v^{1},\dots,v^{m})\in V_{\delta}$, 
\item $0\le u^{i}\le v^{i}\le C_{0}=\max_{\partial \Omega}|\boldsymbol{\varphi}|$, 
\item $\int_{\Omega}v^{i}\,\Delta_{p}v^{i}\,dx=0$. 
\end{enumerate}
\end{lem}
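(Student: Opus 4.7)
The plan is to treat this as a standard obstacle problem solvable by the direct method, and to extract properties (1)--(4) from the associated Euler--Lagrange variational inequality. Let
\[
K := \{w \in W^{1,p}(\Omega) : w = \varphi^i \text{ on } \partial \Omega, \; w \le 0 \text{ a.e.\ on } \{u^i = 0\} \setminus B\}.
\]
This set is nonempty (it contains $u^i$), convex, and weakly closed in $W^{1,p}$: the trace condition survives weak convergence while the sign constraint passes to the a.e.\ limit along a Rellich-convergent subsequence. Since $w \mapsto \int_{\Omega} |\nabla w|^p \, dx$ is strictly convex and coercive on $K$ modulo boundary (apply Poincar\'e to $w - h_0^i$), the direct method produces a unique minimizer $v^i$.

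Property (1) and the outer bounds of (3) follow from truncation. The competitor $\max(v^i, 0)$ lies in $K$ (the sign constraint reads $0 \le 0$) and has energy no larger than $v^i$, so by uniqueness $v^i \ge 0$; combined with $v^i \le 0$ on $\{u^i = 0\} \setminus B$ this yields (1). Symmetrically $\min(v^i, C_0) \in K$ (using $\varphi^i \le C_0$) gives $v^i \le C_0$. The Euler--Lagrange inequality for the obstacle problem reads
\[
\int_{\Omega} A[v^i] \cdot \nabla \eta \, dx \ge 0 \quad \text{for all } \eta \in W_0^{1,p}(\Omega) \text{ with } \eta \le 0 \text{ a.e.\ on } \{u^i = 0\} \setminus B.
\]
Testing with $\eta = -\zeta$ for nonnegative $\zeta \in C_c^{\infty}(\Omega)$ gives $\Delta_p v^i \ge 0$ as a Radon measure. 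In $\Omega_{\delta}$ the strong maximum principle applied to the $p$-harmonic $u^i$ with $\varphi^i > 0$ on $\partial \Omega$ forces $u^i > 0$, so the constraint is vacuous there, and testing with $\eta = \pm \zeta$ for $\zeta \in C_c^{\infty}(\Omega_{\delta})$ forces $\Delta_p v^i = 0$ in $\Omega_{\delta}$. This establishes (2).

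For $u^i \le v^i$, plug $\eta = (u^i - v^i)_+ \in W_0^{1,p}(\Omega)$ into the variational inequality; this nonnegative function vanishes on $\{u^i = 0\}$ because $v^i \ge 0$ there, and so is admissible. Since $\Delta_p u^i \ge 0$ as a Radon measure, duality gives $\int A[u^i] \cdot \nabla (u^i - v^i)_+ \, dx \le 0$; subtracting the two inequalities yields
\[
\int_{\Omega} \bigl( A[u^i] - A[v^i] \bigr) \cdot \nabla (u^i - v^i)_+ \, dx \le 0.
\]
The integrand is pointwise nonnegative by the strict monotonicity of $A(\xi) = |\xi|^{p-2} \xi$, so equality forces $\nabla u^i = \nabla v^i$ a.e.\ on $\{u^i > v^i\}$. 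Hence $\nabla (u^i - v^i)_+ \equiv 0$, and the zero boundary trace closes the argument.

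Finally, for (4), feed $\eta = v^i - u^i$ and $\eta = u^i - v^i$ into the variational inequality (both are admissible since they vanish on $\{u^i = 0\} \setminus B$ by (1)) to obtain the equality $\int A[v^i] \cdot \nabla u^i \, dx = \int |\nabla v^i|^p \, dx$. Extending \eqref{eq: by parts} from $\psi \in C^1$ to $\psi = u^i \in W^{1,p}(\Omega) \cap C(\overline{\Omega})$ by smooth approximation (legitimate because $\mathbf{v} \in V_{\delta}$ renders $v^i$ smooth near $\partial \Omega$ and $A_\nu v^i$ continuous there), and comparing with \eqref{eq: by parts 2} applied to $v^i$, rewrites this equality as $\int (v^i - u^i) \, \Delta_p v^i \, dx = 0$. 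Since $v^i \ge u^i \ge 0$ and $\Delta_p v^i \ge 0$, the nonnegative measure $(v^i - u^i) \, \Delta_p v^i$ vanishes identically, so $\int v^i \, \Delta_p v^i = \int u^i \, \Delta_p v^i$. Testing the variational inequality with $\eta \in C_c^{\infty}(B \cup \{u^i > 0\})$ of either sign (using continuity of the $p$-subharmonic function $u^i$ so that $\{u^i > 0\}$ is open) shows $\Delta_p v^i \equiv 0$ there, so the measure is concentrated on the closed set $\{u^i = 0\} \setminus B$ on which $u^i$ vanishes continuously; thus $\int u^i \, \Delta_p v^i = 0$, and therefore $\int v^i \, \Delta_p v^i = 0$. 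The main obstacle is precisely step (4): one must handle the potentially singular Radon measure $\Delta_p v^i$ with care and rigorously justify the Anzellotti--Giaquinta integration by parts for Sobolev test functions rather than just $C^1$ ones.
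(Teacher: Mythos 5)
Your overall strategy — solve the auxiliary obstacle problem by the direct method, derive the Euler--Lagrange variational inequality, and extract (1)--(4) from it together with truncation arguments and the identities \eqref{eq: by parts}, \eqref{eq: by parts 2} — is the standard one for this kind of auxiliary lemma, and steps (1), (2), and (3) are sound. However, the argument for (4) has a genuine gap where you invoke ``continuity of the $p$-subharmonic function $u^i$'' to conclude that $\{u^i > 0\}$ is open and that $\{u^i = 0\}\setminus B$ is a closed set on which $u^i$ vanishes continuously. At this point in the paper $u^i = (u_\varepsilon^\delta)^i$ is only known to lie in $W^{1,p}(\Omega)$ with $\Delta_p u^i \ge 0$; Lipschitz (even interior continuity) is established only later, in Corollary \ref{cor: u_e,d Lip}, whose proof rests on Theorem \ref{thm: Du<C}, which in turn uses the present lemma. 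Moreover, a $p$-subharmonic function has at best an upper semicontinuous representative, for which $\{u^i > 0\}$ is a countable union of closed sets (an $F_\sigma$) and need not be open, and $\{u^i=0\}$ need not be closed. So testing with $\eta\in C_c^\infty\bigl(B\cup\{u^i>0\}\bigr)$ is not a legitimate step, and the claim that $\Delta_p v^i$ is concentrated on a set where $u^i$ vanishes ``continuously'' is unsupported. The same unavailability of $u^i\in C(\overline\Omega)$ undermines the clause where you justify extending \eqref{eq: by parts} to $\psi=u^i$.

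The gap is repairable within your framework by replacing the topological argument with a $p$-quasi-everywhere one. Interpret the constraint set in the q.e.\ sense, note that $\Delta_p v^i$ is a nonnegative measure in $W^{-1,p'}(\Omega)$ and hence charges no set of $p$-capacity zero, and observe that the quasicontinuous representative of $v^i$ vanishes q.e.\ on $\Lambda := \{u^i=0\}\setminus B$ by (1). Then for the cutoffs $\phi_k$ from the proof of \eqref{eq: by parts}, chosen increasing to $1$, the functions $\pm\phi_k v^i \in W_0^{1,p}(\Omega)$ vanish q.e.\ on $\Lambda$ and are admissible variations; the variational inequality gives $\int A[v^i]\cdot\nabla(\phi_k v^i)\,dx = 0$, i.e.\ $\int \phi_k v^i\,d(\Delta_p v^i)=0$, and monotone convergence yields $\int v^i\,\Delta_p v^i\,dx = 0$ directly, without detouring through $u^i$ or the identity $\int(v^i-u^i)\Delta_p v^i=0$ at all.

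A minor further caveat: in proving $u^i>0$ in $\Omega_\delta$ you should make explicit that every connected component of $\Omega_\delta$ meets $\partial\Omega$ (true for small $\delta$ and smooth $\partial\Omega$), so that the strong minimum principle combined with the positive boundary trace rules out a vanishing component. As you yourself flag in the closing sentence, the delicacy of the lemma is entirely in handling the measure $\Delta_p v^i$ rigorously; the fix above is the precise way to discharge that concern without the continuity assumption.
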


\begin{rem*}
Instead of a ball $B$, we can also use other open subsets of $\Omega$
in the above lemma.
\end{rem*}
\begin{proof}
The proof is similar to that of Lemma 3.7 in \cite{teixeira2005nonlinear} (see also the proof of Theorem 3.6 in \cite{aguilera1987optimization}). 
\end{proof}

\begin{thm}
\label{thm: Du<C}Let $\mathbf{u}=\mathbf{u}_{\varepsilon}^{\delta}$
be a minimizer of $J_{\varepsilon}$ over $V_{\delta}$. There exists
a constant $M=M_{\varepsilon}$, independent of $\delta$, such that
if for some $j$ we have 
\[
\frac{1}{r}\sup_{B_{r/2}(x)}u^{j}\ge M,
\]
then $B_{r}(x)\subset\{|\mathbf{u}|>0\}$, and $\Delta_{p}u^{i}=0$
in $B_{r}(x)$ for every $i$. 
\end{thm}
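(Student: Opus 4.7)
The natural approach is to test $\mathbf{u}=\mathbf{u}_\varepsilon^\delta$ against the competitor $\mathbf{v}$ produced by Lemma \ref{lem: aux obst pr} applied to $B=B_r(x)$, and derive a quantitative contradiction whenever $|B_r(x)\cap\{u^j=0\}|>0$. Properties (1) and (3) of the lemma give $v^i\ge u^i$ everywhere with the two functions having identical zero sets outside $B$, so that $\{|\mathbf{v}|>0\}\setminus\{|\mathbf{u}|>0\}\subset B_r(x)\cap\{|\mathbf{u}|=0\}$; combined with the bound $1/\varepsilon$ on the Lipschitz constant of $f_\varepsilon$, the minimality $J_\varepsilon(\mathbf{u})\le J_\varepsilon(\mathbf{v})$ rearranges to
\[
\int_{\partial\Omega}\bigl[\Gamma(x,A_\nu\mathbf{u})-\Gamma(x,A_\nu\mathbf{v})\bigr]\,d\sigma\;\le\;\tfrac{1}{\varepsilon}\,|B_r(x)\cap\{|\mathbf{u}|=0\}|.
\]

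The heart of the proof is to bound the left-hand side from below by something of order $M^p|B_r(x)\cap\{u^j=0\}|$. First, since $v^i-u^i\ge 0$ in $\Omega$ vanishes on $\partial\Omega$ with both functions $p$-harmonic in $\Omega_\delta$, a Hopf/boundary-comparison argument yields $A_\nu u^i\ge A_\nu v^i$ on $\partial\Omega$. Dropping the nonnegative contributions from $i\ne j$ in the convexity inequality for $\Gamma$ and using the lower bound on $\partial_{\xi_j}\Gamma$ from hypothesis (2) (valid because $A_\nu v^j$ is uniformly bounded near the smooth boundary), the left-hand side dominates a positive constant times $\int_{\partial\Omega}\varphi^j(A_\nu u^j-A_\nu v^j)\,d\sigma$. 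Applying \eqref{eq: by parts 2} to $u^j$ and to $v^j$ and using property (4) of Lemma \ref{lem: aux obst pr} rewrites this integral as
\[
\int_\Omega u^j\,\Delta_p u^j\,dx\;+\;\int_\Omega\bigl(|\nabla u^j|^p-|\nabla v^j|^p\bigr)\,dx,
\]
both terms nonnegative, so the boundary gap controls the Dirichlet gap $\int_\Omega(|\nabla u^j|^p-|\nabla v^j|^p)\,dx$.

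To estimate the Dirichlet gap I invoke Lemma \ref{lem: |u is 0| bdd}. Let $h$ be the $p$-harmonic replacement of $u^j$ in $B_r(x)$ (with $h=u^j$ on $\partial B_r(x)$), extended by $u^j$ outside. Since $\Delta_p u^j\ge 0$, the maximum principle gives $h\ge u^j$ in $B_r(x)$, and this extension is therefore admissible for the obstacle problem that defines $v^j$; hence $\int_\Omega|\nabla v^j|^p\le\int_{B_r}|\nabla h|^p+\int_{\Omega\setminus B_r}|\nabla u^j|^p$, so the Dirichlet gap dominates $\int_{B_r}(|\nabla u^j|^p-|\nabla h|^p)\,dx$, which in turn dominates $c_p\int_{B_r}|\nabla(u^j-h)|^p\,dx$ via the standard $p$-harmonic energy inequality (with its weighted analogue for $1<p<2$). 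Because $\sup_{B_{r/2}}h\ge\sup_{B_{r/2}}u^j\ge Mr$, Lemma \ref{lem: |u is 0| bdd} finally yields $\int_{B_r}|\nabla(u^j-h)|^p\,dx\ge c\,M^p|B_r(x)\cap\{u^j=0\}|$.

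Chaining these estimates with the trivial inclusion $\{|\mathbf{u}|=0\}\subset\{u^j=0\}$ produces
\[
c'M^p\,|B_r(x)\cap\{u^j=0\}|\;\le\;\tfrac{1}{\varepsilon}\,|B_r(x)\cap\{u^j=0\}|,
\]
where $c'$ depends only on $\varepsilon$, $p$, $\Gamma$ and $\boldsymbol{\varphi}$, not on $\delta$; choosing $M=M_\varepsilon$ with $c'M_\varepsilon^p>1/\varepsilon$ forces $|B_r(x)\cap\{u^j=0\}|=0$. The measure-zero conclusion is upgraded to $u^j>0$ throughout $B_r(x)$ by continuity of $u^j$ (Hölder on its open positive set) together with the strong minimum principle applied on that full-measure set, giving $B_r(x)\subset\{|\mathbf{u}|>0\}$; the remaining assertion $\Delta_p u^i=0$ in $B_r(x)$ for every $i$ then follows from a standard interior $p$-harmonic replacement / first-variation argument valid on the open positivity set. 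I expect the main technical hurdles to be the flux comparison $A_\nu u^i\ge A_\nu v^i$ at the boundary and the topological upgrade from measure-zero to empty intersection; both are classical but require some care to carry out cleanly.
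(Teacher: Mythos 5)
Your argument reproduces the paper's strategy very closely on the measure-zero part $|B_r(x)\cap\{u^j=0\}|=0$, and the ingredients (the competitor $\mathbf{v}$ from Lemma~\ref{lem: aux obst pr}, the boundary flux comparison $A_\nu u^i\ge A_\nu v^i$, the telescoping/monotonicity inequality for $\Gamma$, the integration-by-parts identity \eqref{eq: by parts 2}, the $p$-harmonic replacement $h^j$ in $B_r(x)$, Lemma~\ref{lem: |u is 0| bdd}, and the Lipschitz bound on $f_\varepsilon$) are exactly the right ones. The flux comparison you describe is indeed valid, and your observation that $\big\{|\mathbf{u}|=0\big\}\subset\{u^j=0\}$ correctly closes the contradiction once $M_\varepsilon$ is large.

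The genuine gap is that you discard the contributions of the components $i\neq j$ too early, at the level of the $\Gamma$-telescoping inequality, and this costs you the last assertion of the theorem. The paper keeps the full sum through the whole chain, arriving at
\[
\frac{C_a}{C_0}\sum_{i\le m}\int_{B_r(x)}\big|\nabla(u^i-h^i)\big|^p\,dy\;\le\;\frac{1}{\varepsilon}\,\big|B_r(x)\cap\{|\mathbf{u}|=0\}\big|,
\]
and only drops the $i\ne j$ terms \emph{after} this inequality, when applying Lemma~\ref{lem: |u is 0| bdd} to the $j$-th summand. Once $M_\varepsilon$ is large this forces the right-hand side to vanish, and then the same displayed inequality (with all $i$'s retained) gives $\int_{B_r(x)}|\nabla(u^i-h^i)|^p=0$, i.e.\ $u^i=h^i$ and hence $\Delta_p u^i=0$ in $B_r(x)$, for \emph{every} $i$ simultaneously. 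In your version, having truncated to the single index $j$, the chain only gives $u^j=h^j$; for the other components you defer to ``a standard interior $p$-harmonic replacement / first-variation argument,'' but that is not standard here: the functional being minimized is a \emph{boundary} energy $\int_{\partial\Omega}\Gamma(x,A_\nu\mathbf{u})\,d\sigma$ plus a volume penalty, not the bulk Dirichlet energy, so an interior Euler--Lagrange computation does not fall out of a local perturbation. Trying to push it through, one lands on an equality $A_\nu u^i=A_\nu h^i$ on $\partial\Omega$ and would then need a Cauchy-data unique-continuation statement for $p$-harmonic functions near $\partial\Omega$, which is delicate for $p\neq 2$ and in any case far from ``standard.'' The fix is simply not to drop the other indices: retain the sum over $i$ in \eqref{eq: Gamma > sum}, \eqref{eq: BC> gradient}, and the Dirichlet-gap estimate, and use only the $j$-th term when invoking Lemma~\ref{lem: |u is 0| bdd} at the final step, exactly as the paper does.
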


\begin{proof}
Let $\mathbf{v}\in V_{\delta}$ be the function given by Lemma \ref{lem: aux obst pr}
for $B_{r}(x)$. Then we have 
\[
J_{\varepsilon}(\mathbf{u})\le J_{\varepsilon}(\mathbf{v}).
\]
Let $\mathbf{h}_{0}$ be the vector-valued function in $\Omega$ satisfying
$\Delta_{p}h_{0}^{i}=0$, and taking the boundary values $\boldsymbol{\varphi}$
on $\partial \Omega$. Since $0\le u^{i}\le v^{i}\le h_{0}^{i}$, for each
$z\in\partial \Omega$ we have 
\[
\partial_{\nu}h_{0}^{i}(z)\le\partial_{\nu}v^{i}(z)\le\partial_{\nu}u^{i}(z)\le0.
\]
Then by using the fact that $\mathbf{u},\mathbf{v},\mathbf{h}_{0}$
take the same boundary values and therefore have equal tangential
derivatives on $\partial \Omega$, we  deduce that 
\[
a\le A_{\nu}h_{0}^{i}(z)\le A_{\nu}v^{i}(z)\le A_{\nu}u^{i}(z),
\]
where $a$ is a lower bound for $A_{\nu}h_{0}^{i}$ (note that $a$
does not depend on $\delta$).

Hence by the property (2) of $\Gamma$ we have 
\begin{align}\label{eq: Gamma > sum}
 & \int_{\partial \Omega}\Gamma\big(x,A_{\nu}\mathbf{u}(x)\big)-\Gamma\big(x,A_{\nu}\mathbf{v}(x)\big)\,d\sigma\nonumber \\
 & \qquad=\sum_{i=1}^{m}\int_{\partial \Omega}\Gamma(x,A_{\nu}u^{1},\dots,A_{\nu}u^{i-1},A_{\nu}u^{i},A_{\nu}v^{i+1},\dots,A_{\nu}v^{m})\nonumber \\
 & \qquad\qquad\qquad-\Gamma(x,A_{\nu}u^{1},\dots,A_{\nu}u^{i-1},A_{\nu}v^{i},A_{\nu}v^{i+1},\dots,A_{\nu}v^{m})\,d\sigma \\
 & \qquad\ge C_{a}\sum_{i=1}^{m}\int_{\partial \Omega}A_{\nu}u^{i}-A_{\nu}v^{i}\,d\sigma,\nonumber 
\end{align}
where $C_{a}>0$ is the lower bound of $\partial_{\xi_{i}}\Gamma$'s
on the set $\{(x,\xi):\xi_{i}\ge a\}$. On the other hand, using the
identity (\ref{eq: by parts 2}) we get 
\begin{align}\label{eq: BC> gradient}
C_{0}\int_{\partial \Omega}A_{\nu}u^{i}-A_{\nu}v^{i}\,d\sigma & \ge\int_{\partial \Omega}\varphi^{i}\big(A_{\nu}u^{i}-A_{\nu}v^{i}\big)\,d\sigma \notag\\
 & =\int_{\Omega}u^{i}\,\Delta_{p}u^{i}+|\nabla u^{i}|^{p}\,dy\notag\\
 & \qquad\qquad-\int_{\Omega}v^{i}\,\Delta_{p}v^{i}+|\nabla v^{i}|^{p}\,dy\\
 & \ge\int_{\Omega}|\nabla u^{i}|^{p}\,dy-\int_{\Omega}|\nabla v^{i}|^{p}\,dy,\notag
\end{align}
where $C_{0}=\max_{\partial \Omega}|\boldsymbol{\varphi}|$, and in the
last line we used the facts that $\int_{\Omega}v^{i}\,\Delta_{p}v^{i}\,dy=0$
and $u^{i},\Delta_{p}u^{i}\ge0$. Now consider the function $h^{i}$
in $B_{r}(x)$ satisfying $\Delta_{p}h^{i}=0$, and taking boundary
values equal to $u^{i}$. We extend $h^{i}$ to be equal to $u^{i}$
outside of $B_{r}(x)$. Then we have $\mathbf{h}=(h_{1},\dots,h_{m})\in V_{\delta}$.
In addition, $h^{i}=u^{i}=\varphi^{i}$ on $\partial \Omega$ and $h^{i}=u^{i}=0$
on $\{u^{i}=0\} - B_{r}(x)$. Hence due to the minimality property of
$v^{i}$ given by Lemma \ref{lem: aux obst pr} we have 
\[
\int_{\Omega}|\nabla v^{i}|^{p}\,dy\le\int_{\Omega}|\nabla h^{i}|^{p}\,dy.
\]
Combining this with the above inequality we get 
\begin{align*}
C_{0}\int_{\partial \Omega}A_{\nu}u^{i}-A_{\nu}v^{i}\,d\sigma & \ge\int_{\Omega}|\nabla u^{i}|^{p}-|\nabla v^{i}|^{p}\,dy\\
 & \ge\int_{\Omega}|\nabla u^{i}|^{p}-|\nabla h^{i}|^{p}\,dy\\
 & \ge C\int_{B_{r}(x)}|\nabla(u^{i}-h^{i})|^{p}\,dy,
\end{align*}
where the last inequality can be proved similarly to the proof of Lemma
3.1 in \cite{fotouhi2023minimization}. (Note that in
the last line we have also used the fact that $u^{i}=h^{i}$ outside
$B_{r}(x)$.)

Summing the above inequality for each $i$, and using the facts that
$J_{\varepsilon}(\mathbf{u})\le J_{\varepsilon}(\mathbf{v})$, and
$f_{\varepsilon}$ has Lipschitz constant equal to $\frac{1}{\varepsilon}$
we get 
\begin{align*}
\frac{C_{a}}{C_{0}}\sum_{i\le m}\int_{B_{r}(x)}|\nabla(u^{i}-h^{i})|^{p}\,dy & \le C_{a}\int_{\partial \Omega}\sum_{i\le m}(A_{\nu}u^{i}-A_{\nu}v^{i})\,d\sigma\\
 & \le\int_{\partial \Omega}\Gamma\big(x,A_{\nu}\mathbf{u}(x)\big)-\Gamma\big(x,A_{\nu}\mathbf{v}(x)\big)\,d\sigma\\
 & \le f_{\varepsilon}\big(\big|\{|\mathbf{v}|>0\}\big|\big)-f_{\varepsilon}\big(\big|\{|\mathbf{u}|>0\}\big|\big)\\
 & \le\frac{1}{\varepsilon}\big|B_{r}(x)\cap\{|\mathbf{u}|=0\}\big|,
\end{align*}
since $0\le u^{i}\le v^{i}$, and outside of $B_{r}(x)$, $|\mathbf{u}|=0$
implies $|\mathbf{v}|=0$. Therefore by Lemma \ref{lem: |u is 0| bdd} applied
to $u^{j}$ we obtain 
\begin{align*}
\big|B_{r}(x)\cap\{|\mathbf{u}|=0\}\big| & \ge\frac{\varepsilon C_{a}}{C_{0}}\sum_{i\le m}\int_{B_{r}(x)}|\nabla(u^{i}-h^{i})|^{p}\,dy\\
 & \ge\frac{\varepsilon C_{a}}{C_{0}}\int_{B_{r}(x)}|\nabla(u^{j}-h^{j})|^{p}\,dy\\
 & \ge\frac{\varepsilon C_{a}}{cC_{0}}\Big(\frac{1}{r}\sup_{B_{r/2}(x)}h^{j}\Big)^{p}\cdot\big|B_{r}(x)\cap\{u^{j}=0\}\big|\\
 & \ge\frac{\varepsilon C_{a}}{cC_{0}}\Big(\frac{1}{r}\sup_{B_{r/2}(x)}u^{j}\Big)^{p}\cdot\big|B_{r}(x)\cap\{u^{j}=0\}\big|\\
 & \ge\frac{\varepsilon C_{a}M^{p}}{cC_{0}}\big|B_{r}(x)\cap\{|\mathbf{u}|=0\}\big|,
\end{align*}
since $|\mathbf{u}|=0$ implies $u^{j}=0$, and $h^{j}\ge u^{j}$
as $u^{j}$ is $p$-subharmonic. Hence if $M>(\frac{cC_{0}}{\varepsilon C_{a}})^{\frac{1}{p}}$
then $\big|B_{r}(x)\cap\{|\mathbf{u}|=0\}\big|$ must be zero, as
desired. Note that in this case the above inequality also implies
that $u^{i}=h^{i}$ in $B_{r}(x)$ for each $i$; so $u^{i}$ satisfies
the equation in $B_{r}(x)$. 
\end{proof}
\begin{cor}\label{cor: u_e,d Lip}
All minimizers $\mathbf{u}_{\varepsilon}^{\delta}$
are Lipschitz, and for every $\Omega'\subset\subset \Omega$ there exists
a constant $K_{\varepsilon}=K_{\varepsilon}(\Omega')$, independent
of $\delta$, such that 
\[
\|\mathbf{u}_{\varepsilon}^{\delta}|_{\Omega'}\|_{\mathrm{Lip}}\le K_{\varepsilon}.
\]
In addition, $\Delta_{p}(u_{\varepsilon}^{\delta})^{i}=0$ in the
open set $\{|\mathbf{u}_{\varepsilon}^{\delta}|>0\}$. 
\end{cor}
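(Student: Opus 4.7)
The plan is to derive both assertions of the Corollary from Theorem~\ref{thm: Du<C} via a standard free-boundary dichotomy, working with the upper semicontinuous representatives of the $p$-subharmonic components $u^i$. I would proceed by first identifying the interior good set, then using it to bootstrap a uniform Lipschitz bound and the $p$-harmonicity claim simultaneously.

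Let $E$ be the union of all open balls $B_r(x)\subset\Omega$ such that $\frac{1}{r}\sup_{B_{r/2}(x)} u^j\geq M$ for some index $j$. Theorem~\ref{thm: Du<C} gives $E\subset\{|\mathbf{u}|>0\}$ and $\Delta_p u^i=0$ on $E$ for every $i$. Conversely, if $x\in\{|\mathbf{u}|>0\}$, pick $j$ with $u^j(x)>0$; then for $r<u^j(x)/M$ the hypothesis of the theorem holds, so $B_r(x)\subset E$. This shows $E=\{|\mathbf{u}|>0\}$, so this set is open and $\Delta_p u^i=0$ on it, proving the second assertion of the Corollary.

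For the uniform Lipschitz bound, fix $\Omega'\subset\subset\Omega$ and set $r_0=\mathrm{dist}(\Omega',\partial\Omega)/2$. For $x_0\in\Omega'$, set $d:=\mathrm{dist}(x_0,\{|\mathbf{u}|=0\})$, and split into two cases. If $d\geq r_0$, each $u^i$ is $p$-harmonic on $B_{r_0}(x_0)$ (by the first paragraph) and bounded by $C_0$ via Lemma~\ref{lem: aux obst pr}(3); the interior $C^{1,\alpha}$ estimate for $p$-harmonic functions then gives $|\nabla u^i(x_0)|\leq C(n,p)C_0/r_0$. If $d<r_0$, take $y\in\{|\mathbf{u}|=0\}$ with $|x_0-y|=d$. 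Applying the contrapositive of Theorem~\ref{thm: Du<C} at $y$ with radius $r=3d$ (the ball $B_{3d}(y)$ contains the zero point $y$, so the conclusion fails and hence the hypothesis fails) yields $\sup_{B_{3d/2}(y)} u^i\leq 3Md$. Since $B_{d/2}(x_0)\subset B_{3d/2}(y)$ and $u^i$ is $p$-harmonic on $B_d(x_0)\subset E$, the interior gradient estimate gives $|\nabla u^i(x_0)|\leq (C/d)\sup_{B_{d/2}(x_0)} u^i\leq 3CM$. Finally, for $x_0\in\Omega'\cap\{|\mathbf{u}|=0\}$, the theorem applied directly at $x_0$ with any small radius $r$ gives $\sup_{B_{r/2}(x_0)} u^j\leq Mr$, yielding the pointwise Lipschitz bound $u^j(y)\leq 2M|y-x_0|$ near $x_0$. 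Combining these estimates produces the uniform bound $\|\mathbf{u}_\varepsilon^\delta|_{\Omega'}\|_{\mathrm{Lip}}\leq K_\varepsilon$ depending only on $M$, $C_0$, and $r_0$, hence independent of $\delta$.

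The main obstacle is the minor circularity in choosing the correct representative of $\mathbf{u}$ in order to talk about pointwise values of $u^j$ and the distance to the zero set $\{|\mathbf{u}|=0\}$. This is resolved by exploiting that each $u^i$ is $p$-subharmonic in $\Omega$ and hence admits an upper semicontinuous representative, which in particular satisfies $\sup_{B_\rho(x)} u^j\geq u^j(x)$; on the good set $E$ the $p$-harmonic regularity upgrades this to genuine continuity. Once representatives are fixed, the covering/dichotomy argument above is standard and yields both conclusions in the stated form.
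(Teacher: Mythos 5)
Your proposal is essentially correct and follows the same overall strategy as the paper — openness of $\{|\mathbf{u}|>0\}$ and the interior $p$-harmonicity both come from Theorem~\ref{thm: Du<C}, and the Lipschitz estimate is obtained via a dichotomy on the distance of $x_0$ to the zero set, using the contrapositive of Theorem~\ref{thm: Du<C} near the free boundary. There is, however, one genuine difference worth flagging, and two small gaps.

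The genuine difference is in the case when $x_0$ is far from $\{|\mathbf{u}|=0\}$. You invoke the $L^\infty$ bound $u^i\le C_0$ from Lemma~\ref{lem: aux obst pr}(3) and the standard interior $C^{1,\alpha}$ estimate for bounded $p$-harmonic functions, which immediately gives $|\nabla u^i(x_0)|\le C(n,p)C_0/r_0$. The paper instead quotes Lewis' interior gradient estimate in the form $|\nabla u^i(x)|\le C(n,p)d^{-n}\|\mathbf{u}\|_{W^{1,p}}$ and then has to show separately, via the energy comparison against a fixed competitor $\mathbf{u}_0\in V_{\delta_0}$, that $\|\mathbf{u}_\varepsilon^\delta\|_{W^{1,p}}$ is bounded uniformly in $\delta$. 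Your $L^\infty$-based route is cleaner for the purposes of this Corollary alone. What the paper's route buys is the uniform $W^{1,p}$ bound itself, which is then reused downstream (in Lemma~\ref{lem: u>0 near bdry} when $p>n$ and in the weak-convergence argument in Theorem~\ref{thm: Lip-reg}); so the paper is not being wasteful, just amortizing the work.

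The two gaps: (i) In your case $d<r_0$ you apply Theorem~\ref{thm: Du<C} at $y$ with radius $3d$, but this requires $\overline B_{3d}(y)\subset\Omega$, which may fail for $d$ close to $r_0$. This is trivially fixed by splitting at $d\ge r_0/3$ versus $d<r_0/3$ instead; in the range $d\in[r_0/3,r_0)$ the $L^\infty$-based estimate on $B_d(x_0)\subset\{|\mathbf u|>0\}$ still gives $|\nabla u^i(x_0)|\le 3C C_0/r_0$. (ii) The Corollary asserts Lipschitz continuity of $\mathbf{u}_\varepsilon^\delta$ on all of $\Omega$, not just uniform interior bounds; the paper handles the near-boundary region by observing that $\mathbf{u}\in V_\delta$ is $p$-harmonic in $\Omega_\delta$ with smooth boundary data $\boldsymbol\varphi$, hence $C^{1,\alpha}$ up to $\partial\Omega$. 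You should add that sentence. With these two small repairs, your argument is a valid and somewhat more self-contained proof of the Corollary.
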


\begin{proof}
For simplicity we set $\mathbf{u}=\mathbf{u}_{\varepsilon}^{\delta}$.
First let us show that $\{|\mathbf{u}|>0\}$ is an open set. Suppose
$x\in\{|\mathbf{u}|>0\}$. Then $u^{j}(x)>0$ for some $j$. Then
for small enough $r$ we must have 
\[
\frac{1}{r}\sup_{B_{r/2}(x)}u^{j}\ge\frac{1}{r}u^{j}(x)\ge M.
\]
Hence the previous theorem implies that $B_{r}(x)\subset\{|\mathbf{u}|>0\}$
and we have $\Delta_{p}u^{i}=0$ in $B_{r}(x)$.

Next  note that $\nabla\mathbf{u}=0$ a.e. in $\{|\mathbf{u}|=0\}$. So suppose $x\in\{|\mathbf{u}|>0\}\cap\Omega'$. Let $\Omega'\subset\subset\tilde{\Omega}\subset\subset \Omega$,
and $B=B_{d}(x)$, where $d=\mathrm{dist}\big(x,\partial(\{|\mathbf{u}|>0\}\cap\tilde{\Omega})\big)$.
If $\partial B$ touches $\partial\{|\mathbf{u}|=0\}$ then $B_{d+d'}(x)$
intersects $\{|\mathbf{u}|=0\}$, and by previous theorem we have
\[
\frac{1}{d+d'}\sup_{B_{(d+d')/2}(x)}u^{i}\le M
\]
for every $i$. Hence in the limit $d'\to0$ we get 
\[
\frac{1}{d}\sup_{B_{d/2}(x)}u^{i}\le M.
\]
Now since $u^{i}$'s are $p$-harmonic in $B$, as shown in the proof
of Lemma 3.1 of \cite{danielli2005minimum}, we have 
\[
|\nabla u^{i}(x)|\le C\frac{1}{d}\sup_{B_{d/2}(x)}u^{i}\le CM,
\]
where the constant $C$ depends only on $p$ and the dimension $n$.
On the other hand, if $\partial B$ touches $\partial\tilde{\Omega}$ then,
by the interior derivative estimate of \cite{lewis1983regularity},
we obtain (the dependence on $d$ follows from the proof of this estimate; see equation (3.4) in \cite{lewis1983regularity})
\[
|\nabla u^{i}(x)|\le \frac{C(n,p)}{d^n}\|\mathbf{u}\|_{W^{1,p}}\le C,
\]
since $d\ge\mathrm{dist}(\Omega',\partial\tilde{\Omega})$, and $\|\mathbf{u}\|_{W^{1,p}}$ is bounded independently of $\delta$
as will be shown now. Let $\Omega'\subset\subset \Omega$ be a smooth open set
with $|\Omega-\Omega'|=1$. Let $\mathbf{u}_{0}$ be a vector-valued function
on $\Omega-\Omega'$ that satisfies the equation $\Delta_{p}u_{0}^{i}=0$, and
takes the boundary values $\boldsymbol{\varphi}$ on $\partial \Omega$
and $0$ on $\partial \Omega'$. Then for every small enough $\delta$
we have $\mathbf{u}_{0}\in V_{\delta}$. Hence (remember that $\mathbf{u}=\mathbf{u}_{\varepsilon}^{\delta}$)
\begin{align*}
C=J_{\varepsilon}(\mathbf{u}_{0})\ge J_{\varepsilon}(\mathbf{u}_{\varepsilon}^{\delta}) & \ge\int_{\partial \Omega}\Gamma\big(x,A_{\nu}\mathbf{u}_{\varepsilon}^{\delta}(x)\big)\,d\sigma\\
 & \ge\int_{\partial \Omega}\sum_{i=1}^{m}\psi_{i}(x)A_{\nu}(u_{\varepsilon}^{\delta})^{i}-C\,d\sigma,
\end{align*}
where we used \eqref{eq: Gamma coercive} in the last line. Thus by
Lemma \ref{lem: L2 bd der} the $\|\nabla\mathbf{u}_{\varepsilon}^{\delta}\|_{L^{p}(\Omega;\mathbb{R}^{m})}$
is bounded as $\delta\to0$, and the boundedness of $\|\mathbf{u}_{\varepsilon}^{\delta}\|_{W^{1,p}}$ follows from Poincare inequality and the fact that all of $\mathbf{u}_{\varepsilon}^{\delta}$'s have the same boundary values.

Finally, to see that $\mathbf{u}$ is Lipschitz continuous on all
of $\Omega$, note that $\mathbf{u}$ has $p$-harmonic components near
the smooth boundary $\partial \Omega$, attaining smooth boundary conditions
$\boldsymbol{\varphi}$; hence the gradient of $\mathbf{u}$ is bounded
near the boundary too. 
\end{proof}

\begin{lem}
\label{lem: u>0 near bdry} There exists $\delta_{0}=\delta_{0}(\varepsilon)>0$, such that for
every $\delta$ we have $|\mathbf{u}_{\varepsilon}^{\delta}|>0$
on $\Omega_{\delta_{0}}$.
\end{lem}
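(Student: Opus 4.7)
The plan is to split into two cases depending on whether $\delta \ge \delta_0$ or $\delta < \delta_0$. In the easy case $\delta \ge \delta_0$, we have $\Omega_{\delta_0}\subset\Omega_\delta$, and each component $u^i$ of $\mathbf{u}_{\varepsilon}^{\delta}$ is $p$-harmonic in $\Omega_\delta$ with $u^i|_{\partial\Omega}=\varphi^i>0$; the strong maximum principle then forces $u^i>0$ throughout $\Omega_\delta$, giving $|\mathbf{u}_{\varepsilon}^{\delta}|>0$ on $\Omega_{\delta_0}$.

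For $\delta<\delta_0$ I argue by contradiction. Suppose $|\mathbf{u}_{\varepsilon}^{\delta}|(x_0)=0$ for some $x_0\in\Omega_{\delta_0}$, and pick $y_0\in\partial\Omega$ realizing $d:=\mathrm{dist}(x_0,\partial\Omega)<\delta_0$. For $R\in(d/2,r_0)$, where $r_0$ is the interior-ball radius of $\partial\Omega$, let $B=B_R(y_0+R\nu(y_0))\subset\Omega$ be the interior ball tangent to $\partial\Omega$ at $y_0$ (with $\nu$ the inward unit normal); then $x_0\in B$. Applying Lemma \ref{lem: aux obst pr} with this $B$ produces $\mathbf{v}\in V_\delta$ with $v^i\ge u^i$ and $v^i$ $p$-harmonic in $B$; since $y_0\in\overline{B}\cap\partial\Omega$ and $v^i(y_0)=\varphi^i(y_0)>0$, the strong maximum principle gives $v^i>0$ on $B$, and in particular $v^i(x_0)>0=u^i(x_0)$.

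Combining the minimality $J_\varepsilon(\mathbf{u}_{\varepsilon}^{\delta})\le J_\varepsilon(\mathbf{v})$ with the chain of inequalities from the proof of Theorem \ref{thm: Du<C}---using property~(2) of $\Gamma$, the identity \eqref{eq: by parts 2}, the minimization property of $v^i$, and the $p$-energy convexity inequality from Lemma~3.1 of \cite{fotouhi2023minimization}---yields
\[
\sum_{i=1}^m \int_B |\nabla(u^i-h^i)|^p\,dy \;\le\; \frac{C}{\varepsilon}\bigl|B\cap\{|\mathbf{u}_{\varepsilon}^{\delta}|=0\}\bigr|,
\]
where $h^i$ is the $p$-harmonic replacement of $u^i$ in $B$. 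Applying Lemma \ref{lem: |u is 0| bdd} to any single $u^i$ and using $\{|\mathbf{u}_{\varepsilon}^{\delta}|=0\}\subset\{u^i=0\}$ upgrades this to
\[
\Bigl(\tfrac{1}{R}\sup_{B_{R/2}(y_0+R\nu)} h^i\Bigr)^{p} \;\le\; \frac{C}{\varepsilon}.
\]

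The decisive step is to secure a lower bound $\sup_{B_{R/2}} h^i \ge c_0/2$ with $c_0:=\min_i\min_{\partial\Omega}\varphi^i>0$, independently of $R$ and $\delta$. Since $h^i|_{\partial B}=u^i$ and boundary continuity gives $h^i(y_0)=\varphi^i(y_0)\ge c_0$, it suffices to establish a uniform-in-$\delta$ modulus of continuity for $u^i$ at $y_0$---ensuring $u^i\ge c_0/2$ on a definite-size portion of $\partial B$ near $y_0$---and then to invoke a standard Poisson-kernel/Harnack-type lower bound for $p$-harmonic functions in the ball. The uniform boundary modulus is the main technical obstacle; it should follow from exponential-type $p$-sub/superharmonic barriers adapted to the smooth boundary $\partial\Omega$, using only the regularity of $\partial\Omega$ and $\boldsymbol{\varphi}$ (and not the interior geometry of $\Omega_\delta$). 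Once it is in place, the displayed inequality forces $R\ge c_0(\varepsilon/(2^p C))^{1/p}$, which contradicts the freedom to take $R$ arbitrarily close to $d/2<\delta_0/2$, provided $\delta_0$ is chosen as a sufficiently small constant multiple of $\varepsilon^{1/p}$ depending only on $c_0$, $C$, and $p$.
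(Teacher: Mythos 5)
Your proof takes a genuinely different route from the paper. The paper argues by compactness: it assumes a sequence $\delta_k$ for which the free boundary approaches $\partial\Omega$, sets $d_k$ equal to half the distance, rescales $\mathbf{u}_{\varepsilon}^{\delta_k}$ by $d_k$ at the midpoint, and uses uniform interior H\"older bounds for $p$-harmonic functions to extract a limit that must be identically zero (because Theorem~\ref{thm: Du<C} forces $\sup_{B_{d_k/2}} u_k^i \le M_\varepsilon d_k$) yet simultaneously bounded away from zero on a set of positive measure (via a boundary oscillation estimate from \cite{maly1997fine}). You instead fix a single $\delta<\delta_0$ and a free boundary point $x_0$, place an interior tangent ball $B$ of radius $R$ at the nearest boundary point $y_0$, and run the quantitative chain from the proof of Theorem~\ref{thm: Du<C} plus Lemma~\ref{lem: |u is 0| bdd} to extract an explicit lower bound $R\gtrsim\varepsilon^{1/p}$, which contradicts $R$ being close to $d/2<\delta_0/2$. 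The quantitative-ball approach is a legitimate alternative and has the advantage of producing an explicit rate $\delta_0\sim\varepsilon^{1/p}$, whereas the compactness proof gives no rate.

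That said, there are real gaps you should close. First, Lemma~\ref{lem: |u is 0| bdd} is stated and proved under the hypothesis $\overline{B}_r(x_0)\subset\Omega$, but your tangent ball has $\overline B\cap\partial\Omega=\{y_0\}$; you would need to rerun that lemma's proof (or approximate $B$ by $B_{R-\eta}$ from inside, in which case $y_0\notin\partial B$ and the argument for $h^i(y_0)\ge c_0$ changes). Second, and more importantly, the ``decisive step''---a lower bound $\sup_{B_{R/2}}h^i\ge c c_0$ uniform in $R$ and $\delta$---is exactly where the substance of the lemma lives, and you only sketch it. It decomposes into two nontrivial pieces: (a) a boundary modulus of continuity for $u^i$ that is uniform in $\delta$, which is precisely what the paper provides via Theorem~4.19 of \cite{maly1997fine} (for $1<p\le n$) and the uniform $W^{1,p}$ bound plus Morrey embedding (for $p>n$), and uniform in $y_0$ as well since $\partial\Omega$ and $\boldsymbol\varphi$ are smooth; and (b) propagating positivity of the boundary data on the cap $\partial B\cap B_{\rho R}(y_0)$ to the concentric half-ball, which does work by comparison with a scale-invariant $p$-harmonic barrier on $B_1$ (so the constant is independent of $R$), but you need to say so. Third, you must dispose of the degenerate case $|B\cap\{|\mathbf{u}_\varepsilon^\delta|=0\}|=0$, which your displayed inequality does not rule out; as in the proof of Theorem~\ref{thm: Du<C}, in that case $u^i=h^i$ in $B$, which is $p$-harmonic and positive on $\partial B$, contradicting $u^i(x_0)=0$. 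None of these are fatal, but as written the proof pushes the whole weight of the lemma onto an asserted ``standard'' barrier argument that is in fact the core difficulty.
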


\begin{rem*}
Note that as a consequence, $\Delta_{p}(u_{\varepsilon}^{\delta})^{i}=0$
on $\Omega_{\delta_{0}}$ for every $\delta$ (by Theorem \ref{thm: Du<C}).
In other words, $\mathbf{u}_{\varepsilon}^{\delta}\in V_{\delta_{0}}$
for every $\delta$.
\end{rem*}

\begin{proof}
Suppose to the contrary that there is a sequence $\mathbf{u}_{k}=\mathbf{u}_{\varepsilon}^{\delta_{k}}$
for which we have 
\[
2d_{k}:=\mathrm{dist}(\{|\mathbf{u}_{k}|=0\},\partial \Omega)\to0.
\]
Then the midpoint of the closest points on $\{|\mathbf{u}_{k}|=0\}$
and $\partial \Omega$, which we call $x_{k}$, has distance $d_{k}$ from
both of these sets. So the boundary of the ball $B_{d_{k}}(x_{k})$
touches both of these sets. In addition, by Theorem \ref{thm: Du<C},
for every $t>0$ we must have 
\[
\frac{1}{d_{k}}\sup_{B_{d_{k}/2}(x_{k}+t\nu_{k})}u_{k}^{i}\le M_{\varepsilon}
\]
for every $i$ (here $\nu_{k}$ is the direction of the line segment
from $x_{k}$ to its closest point on $\{|\mathbf{u}_{k}|=0\}$).
So in the limit $t\to0$ we get 
\begin{equation}
\sup_{B_{d_{k}/2}(x_{k})}u_{k}^{i}\le M_{\varepsilon}d_{k}.\label{eq: sup u_k -> 0}
\end{equation}
We also have \begin{equation*}
\sup_{B_{d_{k}}(x_{k})}|\mathbf{u}_{k}|\ge c_{0},    
\end{equation*}
where $c_{0}=\min_{i}\min_{\partial \Omega}\varphi^{i}>0$. Because at the point $y_{k}\in\partial B_{d_{k}}(x_{k})\cap\partial \Omega$ we have $u_{k}^{i}(y_{k})=\varphi^{i}(y_{k})\ge c_{0}$ (note that $u_{k}^{i}$ is continuous up to the boundary).

Next consider the functions 
\[
\hat{\mathbf{u}}_{k}(x):=\frac{\mathbf{u}(x_{k}+d_{k}x)}{\sup_{B_{d_{k}}(x_{k})}|\mathbf{u}_{k}|}
\]
on $B_{1}$. Then $\hat{u}_{k}^{i}$ is positive and $p$-harmonic
on $B_{1}$, and we have $\sup_{B_{1}}|\hat{\mathbf{u}}_{k}|=1$.
In addition,
by (\ref{eq: sup u_k -> 0}) we have 
\[
\sup_{B_{1/2}}\hat{u}_{k}^{i}=\frac{\sup_{B_{d_{k}/2}(x_{k})}u_{k}^{i}}{\sup_{B_{d_{k}}(x_{k})}|\mathbf{u}_{k}|}\le\frac{M_{\varepsilon}d_{k}}{c_{0}}\underset{k\to\infty}{\longrightarrow}0.
\]
Furthermore, note that $\hat{u}_{k}^{i}$ is a uniformly bounded sequence
of $p$-harmonic functions on $B_{1}$, so there is $\alpha>0$ such
that for all $r<1$ the H\"older norms $\|\hat{u}_{k}^{i}\|_{C^{0,\alpha}(\overline{B}_{r})}$
are uniformly bounded (see page 251 of \cite{MR0244627}). Hence,
by a diagonal argument, we can construct a subsequence of $\hat{u}_{k}^{i}$,
which we still denote by $\hat{u}_{k}^{i}$, that locally uniformly
converges to a nonnegative $p$-harmonic function $\hat{u}_{\infty}^{i}$
on $B_{1}$. In addition, $\hat{u}_{\infty}^{i}$ must vanish on $B_{1/2}$
by the above estimate. Thus by the strong maximum principle we must
have $\hat{\mathbf{u}}_{\infty}\equiv0$ on $B_{1}$.  

Now for $y_{k}\in\partial B_{d_{k}}(x_{k})\cap\partial \Omega$ and $r<d_{k}$
we have 
\[
\underset{B_{r}(y_{k})\cap \Omega}{\mathrm{osc}}\,u_{k}^{i}\le C(n,p)\Big(r^{\alpha}+\underset{B_{r}(y_{k})\cap\partial \Omega}{\mathrm{osc}}\,\varphi^{i}\Big)\le Cr^{\alpha}
\]
for some $\alpha\in(0,1)$. This estimate holds by Theorem 4.19 of
\cite{maly1997fine} when $1<p\le n$. And when $p>n$ this estimate
holds due to the uniform H\"older continuity of $u_{k}^{i}$ on $\overline{\Omega}$,
since $\|\mathbf{u}_{k}\|_{W^{1,p}(\Omega)}$ is uniformly bounded as we
have seen in the proof of Corollary \ref{cor: u_e,d Lip}. Hence for
$r=d_{k}/2$ we have 
\[
\min_{B_{d_{k}/2}(y_{k})\cap B_{d_{k}}(x_{k})}u_{k}^{i}\ge\min_{B_{d_{k}/2}(y_{k})\cap \Omega}u_{k}^{i}\ge\frac{1}{2}c_{0},
\]
where $c_{0}=\min_{i}\min_{\partial \Omega}\varphi^{i}$. Therefore for
$\hat{y}_{k}=\frac{1}{d_{k}}(y_{k}-x_{k})\in\partial B_{1}$ we have
\[
\min_{B_{1/2}(\hat{y}_{k})\cap B_{1}}\hat{u}_{k}^{i}=\frac{1}{\sup_{B_{d_{k}}(x_{k})}|\mathbf{u}_{k}|}\min_{B_{d_{k}/2}(y_{k})\cap B_{d_{k}}(x_{k})}u_{k}^{i}\ge c>0,
\]
since $\sup_{B_{d_{k}}(x_{k})}|\mathbf{u}_{k}|\le mC_{0}$ where $C_{0}=\max_{\partial \Omega}|\boldsymbol{\varphi}|$.
Thus $\hat{u}_{k}^{i}$ has a uniform positive lower bound on a subset
of $B_{1}$ with positive volume (where the volume is independent
of $k$). So no subsequence of $\hat{\mathbf{u}}_{k}$ can converge
locally uniformly to $\hat{\mathbf{u}}_{\infty}\equiv0$, because
otherwise they will uniformly converge to $0$ outside a set of small
volume, contradicting the uniform boundedness from below.
\end{proof}

Now we can find a minimizer for $J_{\varepsilon}$ over $V$. 

\begin{thm}
\label{thm: Lip-reg} There exists a minimizer $\mathbf{u}_{\varepsilon}\in V$ for $J_{\varepsilon}$.
Moreover, $\mathbf{u}_{\varepsilon}$ is a Lipschitz function, and
$\Delta_{p}u_{\varepsilon}^{i}=0$ in the open set $\{|\mathbf{u}_{\varepsilon}|>0\}$. 
\end{thm}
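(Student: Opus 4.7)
My plan is to obtain $\mathbf{u}_\varepsilon$ as a subsequential limit of the family $\{\mathbf{u}_\varepsilon^{\delta_k}\}$ as $\delta_k \to 0^+$, and then verify the minimizing property by comparison with arbitrary $\mathbf{v} \in V$. I fix $\delta_0 = \delta_0(\varepsilon)$ from Lemma \ref{lem: u>0 near bdry}, so that $\mathbf{u}_\varepsilon^{\delta_k} \in V_{\delta_0}$ for every $\delta_k < \delta_0$. Corollary \ref{cor: u_e,d Lip} provides uniform Lipschitz bounds on compact subsets of $\Omega$ and a uniform $W^{1,p}(\Omega)$-bound; since each $\mathbf{u}_\varepsilon^{\delta_k}$ has $p$-harmonic components on the fixed collar $\Omega_{\delta_0}$ with smooth boundary data $\boldsymbol{\varphi}$, standard boundary regularity for the $p$-Laplacian upgrades this to uniform $C^{1,\alpha}(\overline{\Omega_{\delta_0/2}})$ estimates. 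Passing to a subsequence by Arzel\`a--Ascoli and Banach--Alaoglu, I obtain $\mathbf{u}_\varepsilon^{\delta_k} \to \mathbf{u}_\varepsilon$ locally uniformly on $\Omega$ and in $C^1(\overline{\Omega_{\delta_0/2}})$, with $\nabla \mathbf{u}_\varepsilon^{\delta_k} \rightharpoonup \nabla \mathbf{u}_\varepsilon$ weakly in $L^p(\Omega)$ and $A[u_\varepsilon^{\delta_k,i}] \rightharpoonup A[u_\varepsilon^i]$ weakly in $L^{p/(p-1)}(\Omega)$.

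Next I verify that $\mathbf{u}_\varepsilon \in V_{\delta_0}$. Nonnegativity of each $u_\varepsilon^i$ and the Dirichlet condition $u_\varepsilon^i = \varphi^i$ on $\partial\Omega$ are immediate from the convergence. Testing the distributional identity for $\Delta_p u_\varepsilon^{\delta_k,i}$ against any $\zeta \in C_c^\infty(\Omega)$ (with $\zeta \ge 0$ when needed) and using weak convergence of $A[u_\varepsilon^{\delta_k,i}]$ gives $\Delta_p u_\varepsilon^i \ge 0$ on $\Omega$ and $\Delta_p u_\varepsilon^i = 0$ on $\Omega_{\delta_0}$. Lipschitz continuity of $\mathbf{u}_\varepsilon$ on $\Omega$ follows by combining the uniform Lipschitz bound on compact subsets with the $C^{1,\alpha}$-control near $\partial\Omega$. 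To see $\Delta_p u_\varepsilon^i = 0$ on the open set $\{|\mathbf{u}_\varepsilon|>0\}$: if $|\mathbf{u}_\varepsilon(x_0)|>0$, then by local uniform convergence $|\mathbf{u}_\varepsilon^{\delta_k}|>0$ on some ball $B_r(x_0)$ for all large $k$, so $\Delta_p u_\varepsilon^{\delta_k,i} = 0$ on $B_r(x_0)$ by Corollary \ref{cor: u_e,d Lip}, and this equation is preserved in the weak limit.

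For the minimizing property, let $\mathbf{v} \in V$, so $\mathbf{v} \in V_\delta$ for some $\delta > 0$. Since $\delta_k < \delta$ implies $\Omega_{\delta_k} \subset \Omega_\delta$, and hence $V_\delta \subset V_{\delta_k}$, we have $\mathbf{v} \in V_{\delta_k}$ and the minimality of $\mathbf{u}_\varepsilon^{\delta_k}$ in $V_{\delta_k}$ gives $J_\varepsilon(\mathbf{u}_\varepsilon^{\delta_k}) \le J_\varepsilon(\mathbf{v})$. Taking $k \to \infty$: the boundary integral $\int_{\partial\Omega} \Gamma(x, A_\nu \mathbf{u}_\varepsilon^{\delta_k})\, d\sigma$ converges to $\int_{\partial\Omega} \Gamma(x, A_\nu \mathbf{u}_\varepsilon)\, d\sigma$ by $C^1$-convergence of the normal fluxes on $\partial\Omega$ and continuity of $\Gamma$, while the pointwise inequality $\chi_{\{|\mathbf{u}_\varepsilon|>0\}} \le \liminf_k \chi_{\{|\mathbf{u}_\varepsilon^{\delta_k}|>0\}}$ together with Fatou's lemma and monotonicity of $f_\varepsilon$ gives
\[
f_\varepsilon\big(|\{|\mathbf{u}_\varepsilon|>0\}|\big) \le \liminf_{k\to\infty} f_\varepsilon\big(|\{|\mathbf{u}_\varepsilon^{\delta_k}|>0\}|\big).
\]
Combining these, $J_\varepsilon(\mathbf{u}_\varepsilon) \le \liminf_k J_\varepsilon(\mathbf{u}_\varepsilon^{\delta_k}) \le J_\varepsilon(\mathbf{v})$, as required.

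The principal obstacle is ensuring a \emph{single} $\delta_0$ controls the entire sequence $\mathbf{u}_\varepsilon^{\delta_k}$, so that the limit itself belongs to some $V_{\delta_0}$; without this the $p$-harmonicity collar $\Omega_{\delta_k}$ would collapse and $\mathbf{u}_\varepsilon$ would not lie in $V$. This is precisely what Lemma \ref{lem: u>0 near bdry} provides, and it is the crucial ingredient that separates this theorem from the existence result in $V_\delta$. A secondary technical point is passing from weak convergence of gradients to weak convergence of $A[\cdot]$, which I handle by combining the uniform interior $C^{1,\alpha}$ estimates for $p$-harmonic functions on $\{|\mathbf{u}_\varepsilon|>0\}$ with boundary $C^{1,\alpha}$ estimates on the collar.
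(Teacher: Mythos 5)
Your proposal follows essentially the same route as the paper's proof: extract a subsequential limit $\mathbf{u}_\varepsilon^{\delta_k} \to \mathbf{u}_\varepsilon$ using the uniform $W^{1,p}$ and Lipschitz bounds of Corollary \ref{cor: u_e,d Lip}, invoke Lemma \ref{lem: u>0 near bdry} to fix the collar $\Omega_{\delta_0}$ on which all terms are $p$-harmonic (placing the limit in $V_{\delta_0} \subset V$), verify $p$-harmonicity in $\{|\mathbf{u}_\varepsilon|>0\}$ by stabilization of the positivity sets, and pass the minimality inequality $J_\varepsilon(\mathbf{u}_\varepsilon^{\delta_k}) \le J_\varepsilon(\mathbf{v})$ to the limit. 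The only notable variation is in the final step: you obtain actual convergence of the boundary integral by upgrading to uniform $C^{1,\alpha}$ control up to $\partial\Omega$ on the fixed collar (hence strong convergence of the normal fluxes), and handle the volume term via Fatou and monotonicity of $f_\varepsilon$, whereas the paper simply invokes the weak lower semicontinuity of $J_\varepsilon$ already established via convexity of $\Gamma$ — both are sound, and yours is somewhat more self-contained for this particular sequence since the uniform regularity is now available.
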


\begin{rem*}
As we will see in the following proof, $\mathbf{u}_{\varepsilon}^{\delta}\in V_{\delta_{0}}$ for $\delta_{0}=\delta_{0}(\varepsilon)$ given by the above lemma. So in fact $\mathbf{u}_{\varepsilon}$ is a minimizer of $J_{\varepsilon}$ over some $V_{\delta}$, and therefore it has all the properties of $\mathbf{u}_{\varepsilon}^{\delta}$'s that we have proved so far. In particular, we have $|\mathbf{u}_{\varepsilon}|>0$ on $\Omega_{\delta_{0}}$.
\end{rem*}

\begin{proof}
As we have shown in the proof of Corollary \ref{cor: u_e,d Lip},
 $\|\nabla\mathbf{u}_{\varepsilon}^{\delta}\|_{L^{p}(\Omega;\mathbb{R}^{m})}$
is bounded as $\delta\to0$. Hence there is a subsequence such that
$\mathbf{u}_{\varepsilon}^{\delta}\rightharpoonup\mathbf{u}_{\varepsilon}$
weakly in $W^{1,p}$ (and also a.e.) with $A(\nabla(u_{\varepsilon}^{\delta})^{i})\rightharpoonup A(\nabla u_{\varepsilon}^{i})$
in $L^{q}$ as $\delta\to0$. So, in particular, $u_{\varepsilon}^{i}\ge0$,
$u_{\varepsilon}^{i}$ is $p$-subharmonic, and attains the boundary
condition $\varphi^{i}$. Furthermore, by Corollary \ref{cor: u_e,d Lip}, $\mathbf{u}_{\varepsilon}^{\delta}\to\mathbf{u}_{\varepsilon}$
uniformly on compact subsets of $\Omega$. Hence for each ball $\overline{B}\subset\{|\mathbf{u}_{\varepsilon}|>0\}$
and all small enough $\delta$ we have $\overline{B}\subset\{|\mathbf{u}_{\varepsilon}^{\delta}|>0\}$.
Therefore by using test functions with support in $B$ together with
$A(\nabla(u_{\varepsilon}^{\delta})^{i})\rightharpoonup A(\nabla u_{\varepsilon}^{i})$
we can conclude that $u_{\varepsilon}^{i}$ is $p$-harmonic in $B$.

The same reasoning applied to test functions with support in $\Omega_{\delta_{0}}$, for $\delta_{0}$ given by the previous lemma, implies that $u_{\varepsilon}^{i}$
is $p$-harmonic in $\Omega_{\delta_{0}}$, and thus $\mathbf{u}_{\varepsilon}\in V_{\delta_{0}}\subset V$.
In particular, $u_{\varepsilon}^{i}$ is $p$-harmonic near the smooth
boundary $\partial \Omega$, attaining smooth boundary conditions $\varphi^{i}$,
so it is Lipschitz near $\partial \Omega$. Moreover, $\mathbf{u}_{\varepsilon}$
is Lipschitz inside $\Omega$ away from its boundary, because it is the
uniform limit of a sequence of Lipschitz functions with uniformly
bounded Lipschitz constants. Hence $\mathbf{u}_{\varepsilon}$ is
Lipschitz on all of $\Omega$.

Finally note that $\mathbf{u}_{\varepsilon}$ minimizes $J_{\varepsilon}$
over $V$, since for every $\mathbf{w}\in V$ we have $\mathbf{w}\in V_{\delta}$
for some $\delta$. Thus $J_{\varepsilon}(\mathbf{u}_{\varepsilon}^{\delta})\le J_{\varepsilon}(\mathbf{w})$.
However, $\mathbf{u}_{\varepsilon}^{\delta}\to\mathbf{u}_{\varepsilon}$,
so we get $J_{\varepsilon}(\mathbf{u}_{\varepsilon})\le J_{\varepsilon}(\mathbf{w})$
due to the semicontinuity of $J_{\varepsilon}$. 
\end{proof}

\section{Regularity of solutions to the penalized problem}

To simplify the notation, throughout this section we will suppress the index $\varepsilon$ in $\mathbf{u}_{\varepsilon}$.
\begin{thm}
\label{thm: Du>c} 
For $\tau\in(0,1/4)$ there exists $m_{\varepsilon}(\tau)$
such that if for each $i$ we have 
\[
\frac{1}{r}\sup_{B_{r/2}(x)}u^{i}\le m_{\varepsilon}(\tau),
\]
then $B_{\tau r}(x)\subset\{|\mathbf{u}|=0\}$. 
\end{thm}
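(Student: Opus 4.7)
The plan mirrors the proof of Theorem~\ref{thm: Du<C} but in the opposite direction: I would construct a competitor $\mathbf{v}$ that vanishes on $B_{\tau r}(x)$ and use the minimality of $J_\varepsilon$ to derive a measure estimate on $B_{\tau r}(x)\cap\{|\mathbf{u}|>0\}$. First I would produce the competitor analogously to Lemma~\ref{lem: aux obst pr}, but with the enlarged obstacle $\{u^i=0\}\cup\overline{B_{\tau r}(x)}$: let $v^i$ be the unique minimizer of $\int_\Omega|\nabla w|^p\,dx$ subject to $w=\varphi^i$ on $\partial\Omega$ and $w\le 0$ on this set. Since Lemma~\ref{lem: u>0 near bdry} together with the strong maximum principle for $p$-harmonic functions forces $u^i>0$ on $\Omega_{\delta_0}$, the obstacle sits entirely in $\Omega\setminus\Omega_{\delta_0}$, and the same reasoning as in Lemma~\ref{lem: aux obst pr} yields: $v^i=0$ on the obstacle, $0\le v^i\le u^i$ in $\Omega$, $\int_\Omega v^i\,\Delta_p v^i\,dx=0$, and $\mathbf{v}\in V_{\delta_0}$.

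Next, since $\mathbf{v}\equiv 0$ on $B_{\tau r}(x)$ and $f_\varepsilon$ has slope at least $\varepsilon$, the minimality $J_\varepsilon(\mathbf{u})\le J_\varepsilon(\mathbf{v})$ rearranges to
\[
\varepsilon\,\bigl|B_{\tau r}(x)\cap\{|\mathbf{u}|>0\}\bigr|\;\le\;\int_{\partial\Omega}\bigl[\Gamma(x,A_\nu\mathbf{v})-\Gamma(x,A_\nu\mathbf{u})\bigr]\,d\sigma.
\]
The comparison $v^i\le u^i$ forces $A_\nu v^i\ge A_\nu u^i$ on $\partial\Omega$, while Lewis' $C^{1,\alpha}$ gradient estimate applied to the $p$-harmonic function $v^i$ in $\Omega_{\delta_0}$ (bounded by $C_0=\max|\boldsymbol{\varphi}|$) yields a uniform upper bound $b$ for $A_\nu v^i$. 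Property~(2) of $\Gamma$ then gives $\int_{\partial\Omega}[\Gamma(x,A_\nu\mathbf{v})-\Gamma(x,A_\nu\mathbf{u})]\,d\sigma\le C_b\sum_i\int_{\partial\Omega}(A_\nu v^i-A_\nu u^i)\,d\sigma$, and using \eqref{eq: by parts 2} together with $\int v^i\Delta_p v^i\,dx=0$ and $u^i\Delta_p u^i\ge 0$ this is in turn bounded by $(C_b/c_0)\sum_i\int_\Omega(|\nabla v^i|^p-|\nabla u^i|^p)\,dx$, where $c_0=\min_{\partial\Omega}\varphi^i$.

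For the energy difference I would use the admissible test function $\hat u^i := u^i(1-\eta)$, where $\eta\in C_c^\infty(B_{r/4}(x))$ is a radial cutoff with $\eta\equiv 1$ on $B_{\tau r}(x)$ and $|\nabla\eta|\le C/r$ (possible since $\tau<1/4$). Since $\hat u^i$ satisfies the obstacle and boundary constraints, minimality gives $\int|\nabla v^i|^p\le\int|\nabla\hat u^i|^p$. Expanding $|\nabla\hat u^i|^p=|(1-\eta)\nabla u^i-u^i\nabla\eta|^p$ and combining the Caccioppoli inequality for the $p$-subharmonic $u^i$ (which gives $\int_{B_{r/4}}|\nabla u^i|^p\le Cr^{-p}\int_{B_{r/2}}(u^i)^p$) with the smallness hypothesis $\sup_{B_{r/2}(x)}u^i\le m_\varepsilon(\tau)\,r$ yields $\int_\Omega(|\nabla\hat u^i|^p-|\nabla u^i|^p)\,dx\le C\,m_\varepsilon(\tau)^p\,r^n$. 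Assembling the chain,
\[
\varepsilon\,\bigl|B_{\tau r}(x)\cap\{|\mathbf{u}|>0\}\bigr|\;\le\;C\,m_\varepsilon(\tau)^p\,r^n.
\]

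Choosing $m_\varepsilon(\tau)$ sufficiently small relative to $\varepsilon,\tau,n,p$ and the structural constants forces the intersection to have Lebesgue measure arbitrarily small compared with $|B_{\tau r}(x)|$. The hardest step — and the main obstacle — is upgrading this measure estimate to the pointwise inclusion $B_{\tau r}(x)\subset\{|\mathbf{u}|=0\}$: since $\{|\mathbf{u}|>0\}$ is open by Corollary~\ref{cor: u_e,d Lip}, any non-empty intersection with $B_{\tau r}(x)$ would contain an open subset of strictly positive measure, and combining this observation with the uniform Lipschitz regularity of $\mathbf{u}$ (iterating the above estimate at dyadic scales around a hypothetical positivity point $y\in B_{\tau r}(x)\cap\{|\mathbf{u}|>0\}$ and using Theorem~\ref{thm: Du<C} to control the size of a ball $B_\rho(y)\subset\{|\mathbf{u}|>0\}$) delivers the contradiction that closes the argument.
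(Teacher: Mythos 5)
The early scaffolding of your argument matches the paper: you build the same obstacle minimizer $v^i$ for the enlarged contact set $\{u^i=0\}\cup\overline{B_{\tau r}(x)}$, use the sign relation $A_\nu v^i\ge A_\nu u^i$ on $\partial\Omega$, invoke the convexity and property~(2) of $\Gamma$, and arrive at
\[
\varepsilon\,\bigl|B_{\tau r}(x)\cap\{|\mathbf{u}|>0\}\bigr|\;\le\;C\sum_i\int_\Omega\bigl(|\nabla v^i|^p-|\nabla u^i|^p\bigr)\,dx,
\]
which is essentially the paper's inequality \eqref{eq: 1}. The divergence, and the genuine gap, is in what you insert as the competitor in the energy comparison. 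You use the cutoff $\hat u^i = u^i(1-\eta)$, and after Caccioppoli you only obtain
\[
\varepsilon\,\bigl|B_{\tau r}(x)\cap\{|\mathbf{u}|>0\}\bigr| + C\sum_i\int_{B_{\tau r}(x)}|\nabla u^i|^p\,dx\;\le\;C\,m_\varepsilon(\tau)^p\,r^n,
\]
where the right-hand side is an \emph{external} quantity of order $r^n$, not something tied back to the measure. This shows the measure is small, but it cannot show it is zero, because the transition annulus $B_{r/4}\setminus B_{\tau r}$ contributes the free $O(m_\varepsilon^p r^n)$ that has no reason to vanish. The paper instead takes $h^i=\min\{u^i,\tfrac{s_i}{g(\sqrt\tau r)}(g(|y-x|))^+\}$ built from the radial $p$-fundamental solution, which vanishes on $\overline{B_{\tau r}}$ and dominates $u^i$ on $\partial B_{\sqrt\tau r}$; integrating by parts collapses the energy excess to a single boundary term $\frac{s_i^{p-1}}{r^{p-1}}\int_{\partial B_{\tau r}}u^i\,d\sigma$, and the trace/Poincar\'e estimate then re-expresses this term in terms of the very same two quantities $\bigl|B_{\tau r}\cap\{|\mathbf{u}|>0\}\bigr|$ and $\int_{B_{\tau r}}|\nabla u^i|^p$ with a factor $m_\varepsilon^{p-1}$. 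That is what makes the inequality self-defeating and forces the measure (and hence, by openness, the whole intersection) to vanish.

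Your proposed repair of the gap does not close it. If $y\in B_{\tau r}(x)\cap\{|\mathbf{u}|>0\}$, then inside $B_{r/2}(x)$ the hypothesis gives $\tfrac1s\sup_{B_{s/2}(y)}u^j\le m_\varepsilon r/s$, so Theorem~\ref{thm: Du<C} only guarantees $B_s(y)\subset\{|\mathbf{u}|>0\}$ for $s\lesssim m_\varepsilon r/M_\varepsilon$. The resulting volume is $O\bigl((m_\varepsilon r/M_\varepsilon)^n\bigr)$, and comparing this with your measure bound $O(m_\varepsilon^p r^n)$ requires $m_\varepsilon^{n-p}\gtrsim M_\varepsilon^n$; for $p<n$ this wants $m_\varepsilon$ \emph{large}, the opposite of what you are free to impose. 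The dyadic iteration therefore cannot substitute for the barrier-based argument. The missing idea is precisely the choice of the $p$-harmonic radial competitor and the integration by parts that converts the energy excess into a boundary trace of $u^i$ rather than a bulk Caccioppoli term.
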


\begin{proof}
Similarly to Lemma \ref{lem: aux obst pr}, we can show that there
is $v^{i}\in W^{1,p}(\Omega)$ that minimizes the functional $\int_{\Omega}|\nabla v^{i}|^{p}\,dx$
among all functions with $v^{i}=\varphi^{i}$ on $\partial \Omega$ and
$v^{i}\le0$ on $\{u^{i}=0\}\cup\overline{B}_{\tau r}(x)$. The function
$v^{i}$ also satisfies $\Delta_{p}v^{i}\ge0$, $\int_{\Omega}v^{i}\,\Delta_{p}v^{i}\,dx=0$,
and $u^{i}\ge v^{i}\ge0$ (to see this, note that $\Delta_{p}v^{i}\ge\Delta_{p}u^{i}$
on $\Omega-\big(\{u^{i}=0\}\cup\overline{B}_{\tau r}(x)\big)\subset\{|\mathbf{u}|>0\}$,
and $v^{i}-u^{i}\le0$ on $\{u^{i}=0\}\cup\overline{B}_{\tau r}(x)$
or $\partial \Omega$). In addition, we have $\mathbf{v}=(v_{1},\dots,v_{m})\in V_{\delta_{1}}\subset V$
(where $\delta_{1}$ is small enough so that $\overline{B}_{\tau r}(x)\subset \Omega-\Omega_{\delta_{1}}$).
Thus 
\[
J_{\varepsilon}(\mathbf{u})\le J_{\varepsilon}(\mathbf{v}).
\]
Let us assume that $\delta_{1}$ is small enough so that $\overline{B}_{r}(x)\subset \Omega-\Omega_{\delta_{1}}$
and $\mathbf{u}\in V_{\delta_{1}}$. Let $\mathbf{w}$ be a vector-valued
$p$-harmonic function in $\Omega_{\delta_{1}}$ with boundary values
equal to $\boldsymbol{\varphi}$ on $\partial \Omega$ and equal to 0 on
$\partial \Omega_{\delta_{1}}-\partial \Omega$. Then we have $u^{i}\ge v^{i}\ge w^{i}\ge0$
(since $\mathbf{u},\mathbf{v}$ are also $p$-harmonic on $\Omega_{\delta_{1}}$,
and nonnegative everywhere). Thus for each $z\in\partial \Omega$ we have
\[
0\ge\partial_{\nu}w^{i}(z)\ge\partial_{\nu}v^{i}(z)\ge\partial_{\nu}u^{i}(z).
\]
Next using the fact that $\mathbf{u},\mathbf{v},\mathbf{w}$ take
the same boundary values on $\partial \Omega$, and therefore have equal
tangential derivatives on $\partial \Omega$, we  deduce that 
\[
0\ge A_{\nu}w^{i}(z)\ge A_{\nu}v^{i}(z)\ge A_{\nu}u^{i}(z).
\]
Now similar to (\ref{eq: Gamma > sum}) we  can show that 
\[
\int_{\partial \Omega}\Gamma\big(x,A_{\nu}\mathbf{v}(x)\big)-\Gamma\big(x,A_{\nu}\mathbf{u}(x)\big)\,d\sigma\le C_{1}\sum_{i=1}^{m}\int_{\partial \Omega}A_{\nu}v^{i}-A_{\nu}u^{i}\,d\sigma,
\]
where $C_{1}>0$ is the upper bound of $\partial_{\xi_{i}}\Gamma$'s
on the set $\{(x,\xi):\xi_{i}\le0\}$. On the other hand, using the
identity (\ref{eq: by parts 2}) we obtain  (using the notation $c_{0}=\min_{i}\min_{\partial \Omega}\varphi^{i}$)
\begin{align}\label{boundary-to-gradient-v<u}
c_{0}\int_{\partial \Omega}A_{\nu}v^{i}-A_{\nu}u^{i}\,d\sigma & \le\int_{\partial \Omega}\varphi^{i}\big(A_{\nu}v^{i}-A_{\nu}u^{i}\big)\,d\sigma\notag\\
 & =\int_{\Omega}v^{i}\,\Delta_{p}v^{i}+|\nabla v^{i}|^{p}\,dy\notag\\
 & \qquad\qquad-\int_{\Omega}u^{i}\,\Delta_{p}u^{i}+|\nabla u^{i}|^{p}\,dy\notag\\
 & =\int_{\Omega}|\nabla v^{i}|^{p}\,dy-\int_{\Omega}|\nabla u^{i}|^{p}\,dy,
\end{align}
where in the last line we used the facts that $\int_{\Omega}v^{i}\,\Delta_{p}v^{i}\,dy=0$,
and $\Delta_{p}u^{i}=0$ on $\{u^{i}\ne0\}\subset\{|\mathbf{u}|>0\}$.

Summing the above inequality for each $i$, and using the facts that
$J_{\varepsilon}(\mathbf{u})\le J_{\varepsilon}(\mathbf{v})$, and
the derivative of $f_{\varepsilon}$ is bounded below by $\varepsilon$,
we get 
\begin{align}
\frac{C_{1}}{c_{0}}\sum_{i\le m}\int_{\Omega}|\nabla v^{i}|^{p}-|\nabla u^{i}|^{p}\,dy & \ge C_{1}\int_{\partial \Omega}\sum_{i\le m}(A_{\nu}v^{i}-A_{\nu}u^{i})\,d\sigma\nonumber \\
 & \ge\int_{\partial \Omega}\Gamma\big(x,A_{\nu}\mathbf{v}(x)\big)-\Gamma\big(x,A_{\nu}\mathbf{u}(x)\big)\,d\sigma\nonumber \\
 & \ge f_{\varepsilon}\big(\big|\{|\mathbf{u}|>0\}\big|\big)-f_{\varepsilon}\big(\big|\{|\mathbf{v}|>0\}\big|\big)\nonumber \\
 & \ge\varepsilon\big|\{|\mathbf{u}|>0\}\cap\{|\mathbf{v}|=0\}\big|\nonumber \\
 & \ge\varepsilon\big|\{|\mathbf{u}|>0\}\cap B_{\tau r}(x)\big|,\label{eq: 1}
\end{align}
since $u^{i}\ge v^{i}\ge0$ and $v^{i}=0$ in $B_{\tau r}(x)$.

Next we define $g:(0,\infty)\to\mathbb{R}$ by 
\[
g(t):=\begin{cases}
t^{\frac{p-n}{p-1}}-(\tau r)^{\frac{p-n}{p-1}} & p>n,\\
\log t-\log(\tau r) & p=n,\\
(\tau r)^{\frac{p-n}{p-1}}-t^{\frac{p-n}{p-1}} & p<n.
\end{cases}
\]
Note that $g$ is an increasing function that vanishes at $t=\tau r$,
and is negative for $t<\tau r$. In addition, $g(|x|)$ is a $p$-harmonic
function in $\mathbb{R}^{n}-\{0\}$, which is negative on $B_{\tau r}(x)$
and vanishes on $\partial B_{\tau r}(x)$. Now let us define $h^{i}:B_{\sqrt{\tau}r}(x)\to\mathbb{R}$
by 
\[
h^{i}(y):=\min\{u^{i}(y),\,\frac{s_{i}}{g(\sqrt{\tau}r)}\big(g(|y-x|)\big)^{+}\},
\]
where $s_{i}:=\max_{\overline{B}_{\sqrt{\tau}r}(x)}u^{i}$. We extend
$h^{i}$ by $u^{i}$ outside of $B_{\sqrt{\tau}r}(x)$. Note that
we have $h^{i}=0$ on $\{u^{i}=0\}\cap\overline{B}_{\tau r}(x)$ and
$h^{i}=u^{i}=\varphi^{i}$ on $\partial \Omega$. Hence $h^{i}$ competes
with $v^{i}$, and we have $\int_{\Omega}|\nabla v^{i}|^{p}\,dx\le\int_{\Omega}|\nabla h^{i}|^{p}\,dx$.
Therefore we can exchange $v^{i}$ by $h^{i}$ in the inequality (\ref{eq: 1})
to get 
\[
\frac{\varepsilon c_{0}}{C_{1}}\big|\{|\mathbf{u}|>0\}\cap B_{\tau r}(x)\big|\le\sum_{i\le m}\int_{B_{\sqrt{\tau}r}(x)}|\nabla h^{i}|^{p}-|\nabla u^{i}|^{p}\,dy.
\]
Now since $h^{i}=0$ on $B_{\tau r}(x)$, we can rewrite the above
inequality as 
\begin{align}
\frac{\varepsilon c_{0}}{C_{1}}\big|\{|\mathbf{u}|>0\}\cap B_{\tau r}(x)\big|\,+ & \sum_{i\le m}\int_{B_{\tau r}(x)}|\nabla u^{i}|^{p}\,dy\nonumber \\
 & \le\sum_{i\le m}\int_{B_{\sqrt{\tau}r}(x)-B_{\tau r}(x)}|\nabla h^{i}|^{p}-|\nabla u^{i}|^{p}\,dy.\label{eq: 2}
\end{align}
But 
\[
|\nabla h^{i}|^{p}-|\nabla u^{i}|^{p}\le-p|\nabla h^{i}|^{p-2}\nabla h^{i}\cdot\nabla(u^{i}-h^{i}),
\]
since for two vectors $a,b$ we have $|a|^{p}-|b|^{p}\le-p|a|^{p-2}a\cdot(b-a)$.
So we can estimate the right hand side of \eqref{eq: 2} as follows
(using integration by parts, and the facts that $\Delta_{p}h^{i}=0$
on $\{u^{i}>h^{i}\}$, $h^{i}=0$ on $\partial B_{\tau r}(x)$, and
$h^{i}=u^{i}$ on $\partial B_{\sqrt{\tau}r}(x)$): 
\begin{align*}
\int_{B_{\sqrt{\tau}r}(x)-B_{\tau r}(x)} & |\nabla h^{i}|^{p}-|\nabla u^{i}|^{p}\,dy\\
 & \le-p\int_{B_{\sqrt{\tau}r}(x)-B_{\tau r}(x)}|\nabla h^{i}|^{p-2}\nabla(u^{i}-h^{i})\cdot\nabla h^{i}\,dy\\
 & =p\int_{\partial B_{\tau r}(x)}(u^{i}-h^{i})|\nabla h^{i}|^{p-2}\nabla h^{i}\cdot\nu\,d\sigma\\
 & \qquad-p\int_{\partial B_{\sqrt{\tau}r}(x)}(u^{i}-h^{i})|\nabla h^{i}|^{p-2}\nabla h^{i}\cdot\nu\,d\sigma\\
 & =p\int_{\partial B_{\tau r}(x)}u^{i}|\nabla h^{i}|^{p-2}\nabla h^{i}\cdot\nu\,d\sigma=C(n,p,\tau)\frac{s_{i}^{p-1}}{r^{p-1}}\int_{\partial B_{\tau r}(x)}u^{i}\,d\sigma,
\end{align*}
where the last equality is calculated using the fact $h^{i}(y)=\frac{s_{i}}{g(\sqrt{\tau}r)}\big(g(|y-x|)\big)^{+}=0$
on $\overline{B}_{\tau r}(x)$; hence on $\partial B_{\tau r}(x)$
we have 
\[
\nabla h^{i}=C(\tau)s_{i}r^{\frac{n-p}{p-1}}\begin{cases}
\frac{2|p-n|}{p-1}|y-x|^{\frac{2-n-p}{p-1}}(y-x) & p\ne n,\\
|y-x|^{-2}(y-x) & p=n,
\end{cases}
\]
and thus 
\begin{align*}
|\nabla h^{i}|^{p-2}\nabla h^{i}\cdot\nu & =C(n,p,\tau)s_{i}^{p-1}r^{n-p}|y-x|^{2-n-p}|y-x|^{p-2}(y-x)\cdot\frac{(y-x)}{\tau r}\\
 & =C(n,p,\tau)s_{i}^{p-1}r^{n-p}|y-x|^{-n+2}\frac{1}{\tau r}=C(n,p,\tau)\frac{s_{i}^{p-1}}{r^{p-1}}.
\end{align*}
Hence (\ref{eq: 2}) becomes 
\begin{align}
\frac{\varepsilon c_{0}}{C_{1}}\big|\{|\mathbf{u}|>0\}\cap B_{\tau r}(x)\big|\,+ & \sum_{i\le m}\int_{B_{\tau r}(x)}|\nabla u^{i}|^{p}\,dy\nonumber \\
 & \le C(n,p,\tau)\sum_{i\le m}\frac{s_{i}^{p-1}}{r^{p-1}}\int_{\partial B_{\tau r}(x)}u^{i}\,d\sigma.\label{eq: 3}
\end{align}
On the other hand we have 
\begin{align}
\int_{\partial B_{\tau r}(x)}u^{i}\,d\sigma & \le c(n,\tau)\Big(\int_{B_{\tau r}(x)}u^{i}\,dy+\int_{B_{\tau r}(x)}|\nabla u^{i}|\,dy\Big)\nonumber \\
 & \le c(n,\tau)\Big((s_{i}+1)\cdot\big|\{|\mathbf{u}|>0\}\cap B_{\tau r}(x)\big|+\int_{B_{\tau r}(x)}|\nabla u^{i}|^{p}\,dy\Big),\label{eq: 4}
\end{align}
where in the last line we estimated $u^{i},|\nabla u^{i}|$ from above
by $s_{i},1+|\nabla u^{i}|^{p}$ on the set $\{u^{i}>0\}\subset\{|\mathbf{u}|>0\}$.
Next note that 
\begin{equation}
s_{i}=\max_{\overline{B}_{\sqrt{\tau}r}(x)}u^{i}\le\sup_{B_{r/2}(x)}u^{i}\le rm_{\varepsilon}(\tau),\label{eq: 5}
\end{equation}
since $\sqrt{\tau}<1/2$. Combining the inequalities (\ref{eq: 3}),
(\ref{eq: 4}), and (\ref{eq: 5}) we get 
\begin{align*}
 & \frac{\varepsilon c_{0}}{C_{1}}\big|\{|\mathbf{u}|>0\}\cap B_{\tau r}(x)\big|+\sum_{i\le m}\int_{B_{\tau r}(x)}|\nabla u^{i}|^{p}\,dy\\
 & \le cC\sum_{i\le m}\frac{s_{i}^{p-1}}{r^{p-1}}\Big((s_{i}+1)\cdot\big|\{|\mathbf{u}|>0\}\cap B_{\tau r}(x)\big|+\int_{B_{\tau r}(x)}|\nabla u^{i}|^{p}\,dy\Big)\\
 & \le cC\,m_{\varepsilon}^{p-1}(\tau)\Big(\big|\{|\mathbf{u}|>0\}\cap B_{\tau r}(x)\big|\sum_{i\le m}(s_{i}+1)+\sum_{i\le m}\int_{B_{\tau r}(x)}|\nabla u^{i}|^{p}\,dy\Big).
\end{align*}
Now if $m_{\varepsilon}(\tau)$ is small enough, we must necessarily
have $|\mathbf{u}|=0$ on $B_{\tau r}(x)$, as desired. 
\end{proof}
Now let us set 
\begin{align*}
U & :=\{x\in \Omega:|\mathbf{u}(x)|>0\},\\
E & :=\{x\in \Omega:|\mathbf{u}(x)|=0\}.
\end{align*}
\begin{lem}\label{lem:coperative}
For every $i$ we have 
\[
U=\{x\in\Omega:u^{i}(x)>0\},\qquad E=\{x\in\Omega:u^{i}(x)=0\}.
\]
\end{lem}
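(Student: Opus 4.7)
The inclusion $\{u^i>0\}\subset U$ is immediate, so the plan is to prove $U\subset\{u^i>0\}$ by contradiction: assume $x_0\in U$ with $u^i(x_0)=0$. By Theorem \ref{thm: Lip-reg} the set $U$ is open and $u^i$ is $p$-harmonic on $U$, so the strong maximum principle for nonnegative $p$-harmonic functions forces $u^i\equiv 0$ on the connected component $V$ of $U$ containing $x_0$. The aim is to show that no such $V$ can exist.

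If $\overline{V}$ meets $\partial\Omega$, I can conclude immediately: since $\mathbf{u}\in V_{\delta_0}$ by Lemma \ref{lem: u>0 near bdry}, $u^i$ is $p$-harmonic near the smooth boundary with the strictly positive data $\varphi^i$, hence continuous up to $\partial\Omega$ and strictly positive in a neighborhood of any point of $\overline{V}\cap\partial\Omega$; that neighborhood meets $V$, contradicting $u^i\equiv 0$ on $V$.

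Otherwise $\overline{V}\subset\subset\Omega$, and I plan to use a competitor argument, defining
\[
\mathbf{u}'(x):=\begin{cases}\mathbf{0},& x\in V,\\ \mathbf{u}(x),& x\in\Omega\setminus V,\end{cases}
\]
and showing $\mathbf{u}'\in V_{\delta_0}$ with $J_\varepsilon(\mathbf{u}')<J_\varepsilon(\mathbf{u})$. A preliminary topological observation, namely $\partial V\cap\Omega\subset E$ (otherwise a point of $\partial V$ would be interior to a different component of $U$, contradicting its being a limit of points of $V$), guarantees that $\mathbf{u}$ vanishes on $\partial V$ and hence that $\mathbf{u}'$ is continuous. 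Most admissibility conditions for $V_{\delta_0}$ are routine because $V\cap\Omega_{\delta_0}=\emptyset$; the one step I expect to need genuine care is checking $\Delta_p u'^i\ge 0$ as a Radon measure across $\partial V$, which should follow from integration by parts over $\Omega\setminus\overline{V}$ together with the observation that the nonnegative $u^i$ attains $0$ on $\partial V$ from outside, so the resulting boundary term $|\nabla u^i|^{p-2}\partial_{\nu_V}u^i$ is nonnegative.

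To finish, note that $\mathbf{u}'=\mathbf{u}$ on $\Omega_{\delta_0}$, so the normal derivatives on $\partial\Omega$ are unchanged and the boundary integral $\int_{\partial\Omega}\Gamma(x,A_\nu\mathbf{u}')\,d\sigma$ matches that for $\mathbf{u}$. On the other hand $|\{|\mathbf{u}'|>0\}|=|\{|\mathbf{u}|>0\}|-|V|$, a strict decrease since $V$ is open and nonempty, and the strict monotonicity of $f_\varepsilon$ then gives $J_\varepsilon(\mathbf{u}')<J_\varepsilon(\mathbf{u})$, contradicting the minimality of $\mathbf{u}$. The main obstacle is the distributional sign verification for $\Delta_p u'^i$ at $\partial V$; once that is in place, the rest of the argument is straightforward.
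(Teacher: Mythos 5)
Your Case 1 analysis and the topological observation $\partial V\cap\Omega\subset E$ in Case 2 are exactly the right ingredients, and they match the structure of the paper's proof up to that point. But you then miss the immediate finish: once you know $\partial V\subset E$, you know that \emph{every} component $u^{j}$ (not just the fixed $u^{i}$) vanishes on $\partial V$. Each $u^{j}$ is $p$-harmonic and nonnegative on $V$ (Theorem \ref{thm: Lip-reg}), so by the maximum principle $u^{j}\equiv0$ on $V$ for every $j$, hence $\mathbf{u}\equiv\mathbf{0}$ on $V$ and $V\subset E$. That directly contradicts $V\subset U$ and closes Case 2. This is precisely what the paper does.

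The competitor argument you set up instead is not merely unnecessary detail; as written it cannot reach a contradiction, because its premise is already self-defeating. Under the Case 2 hypotheses, the reasoning above forces $\mathbf{u}$ to vanish identically on $V$, so your truncated competitor $\mathbf{u}'$ actually coincides with $\mathbf{u}$. Consequently $\big|\{|\mathbf{u}'|>0\}\big|=\big|\{|\mathbf{u}|>0\}\big|$, not $\big|\{|\mathbf{u}|>0\}\big|-|V|$, and $J_{\varepsilon}(\mathbf{u}')=J_{\varepsilon}(\mathbf{u})$, so no strict decrease is available to contradict minimality. The ``main obstacle'' you flagged --- the distributional sign of $\Delta_{p}(u')^{i}$ across $\partial V$ --- is therefore not the issue; the issue is that Case 2 is already vacuous once you push the maximum principle through all $m$ components, and the variational comparison never gets off the ground. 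Replace your Case 2 with the two-line maximum-principle argument above, and the proof coincides with the paper's.
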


\begin{proof}
By Theorem \ref{thm: Lip-reg}, each $u^{i}$ is $p$-harmonic in
the open set $U$. So in each component of $U$ either $u^{i}>0$
or $u^{i}\equiv0$ (by the strong maximum principle). Now consider
a component of $U$, say $U_{1}$. If $\partial U_{1}$ does not intersect
$\partial\Omega$ then it must be a subset of $E$. Therefore every
$u^{i}$ vanishes on $\partial U_{1}$, and hence every $u^{i}$ vanishes
on $U_{1}$ by the maximum principle. So we would have $U_{1}\subset E$,
which is a contradiction. Thus $\partial U_{1}$ must intersect $\partial\Omega$.
Hence each $u^{i}>0$ on $U_{1}$, since they are positive on $\partial\Omega$.
Therefore each $u^{i}$ is positive on every component of $U$, as
desired.
\end{proof}

\begin{cor}\label{cor-nondegeneracy}
There are $c,C>0$ such that for $x\in U$ near $\partial E$ we have
\[
c\cdot\mathrm{dist}(x,\partial E)\le|\mathbf{u}(x)|\le C\cdot\mathrm{dist}(x,\partial E).
\]
\end{cor}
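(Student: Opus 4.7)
The plan is to split this into two independent inequalities. The upper bound should be a direct consequence of the Lipschitz regularity established in Theorem~\ref{thm: Lip-reg}: since $\mathbf{u}\equiv 0$ on the closed set $E$, any $x\in U$ satisfies $|\mathbf{u}(x)|=|\mathbf{u}(x)-\mathbf{u}(y)|\le L|x-y|$ for every $y\in E$, so taking the infimum over $y$ gives $|\mathbf{u}(x)|\le L\,\mathrm{dist}(x,E)$. A quick observation, that the nearest point of $E$ to an $x\in U$ must lie in $\partial E$ because $U$ is open, identifies $\mathrm{dist}(x,E)$ with $\mathrm{dist}(x,\partial E)$ and yields the upper bound with $C=L$.

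For the lower bound I would combine the nondegeneracy statement in Theorem~\ref{thm: Du>c} with Harnack's inequality. Fix once and for all some $\tau\in(0,1/4)$ (say $\tau=1/8$) and let $m_{\varepsilon}=m_{\varepsilon}(\tau)$ be the associated constant. Given $x\in U$ near $\partial E$, set $d:=\mathrm{dist}(x,\partial E)$ and take $r=d$. Since $x\in B_{\tau r}(x)\cap U$, the ball $B_{\tau r}(x)$ is \emph{not} a subset of $E$, so the contrapositive of Theorem~\ref{thm: Du>c} forces an index $i$ with
\[
\sup_{B_{r/2}(x)}u^{i}\;>\;r\,m_{\varepsilon}\;=\;m_{\varepsilon}\,d.
\]

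The remaining step is to convert this supremum bound into a pointwise bound at $x$ itself. For this I would use that $B_{d}(x)\subset U$ (which follows from $d=\mathrm{dist}(x,E)$ once we observe, as in the upper-bound step, that the nearest point of $E$ lies in $\partial E$), that by Lemma~\ref{lem:coperative} the component $u^{i}$ is strictly positive on $B_{d}(x)$, and that by Theorem~\ref{thm: Lip-reg} it is $p$-harmonic there. Harnack's inequality for positive $p$-harmonic functions on $B_{d}(x)$ then gives $\sup_{B_{d/2}(x)}u^{i}\le C_{H}\,u^{i}(x)$ with $C_{H}=C_{H}(n,p)$, and chaining the two inequalities delivers
\[
|\mathbf{u}(x)|\;\ge\;u^{i}(x)\;\ge\;\frac{m_{\varepsilon}}{C_{H}}\,\mathrm{dist}(x,\partial E).
\]

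I do not anticipate serious obstacles. The one bookkeeping detail is to ensure that $B_{d}(x)\subset\Omega$ so that Harnack applies, but this is automatic as soon as $x$ is sufficiently close to $\partial E$: by Lemma~\ref{lem: u>0 near bdry} the free boundary $\partial E$ sits a positive distance $\delta_{0}=\delta_{0}(\varepsilon)$ away from $\partial\Omega$, so $d\le\delta_{0}/2$ (say) is enough, and this is precisely the meaning of the qualifier ``near $\partial E$'' in the statement.
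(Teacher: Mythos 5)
Your proof is correct, and the two halves line up exactly with what the paper does for the upper bound but take a genuinely different route for the lower bound. The paper argues indirectly: it supposes a sequence $x_k\in U$ with $|\mathbf{u}(x_k)|\le \frac{1}{k}\mathrm{dist}(x_k,\partial E)$, blows up by $\mathbf{u}_k(x)=\mathbf{u}(x_k+r_kx)/r_k$ with $r_k=\mathrm{dist}(x_k,\partial E)$, extracts a locally uniformly convergent subsequence of $p$-harmonic functions, invokes Theorem~\ref{thm: Du>c} to force $\sup_{B_{1/2}}|\mathbf{u}_0|\ge m_\varepsilon$ while the assumption gives $\mathbf{u}_0(0)=0$, and concludes by the strong minimum principle. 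You instead convert the nondegeneracy estimate of Theorem~\ref{thm: Du>c} into a pointwise lower bound directly via Harnack's inequality on the interior tangent ball $B_d(x)\subset U$, using Lemma~\ref{lem:coperative} to guarantee strict positivity of each component and Theorem~\ref{thm: Lip-reg} for $p$-harmonicity there.

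Your route is more elementary and yields an explicit constant $c=m_\varepsilon/C_H$, whereas the paper's compactness argument produces a non-explicit constant but is the pattern reused throughout the paper (e.g., in the subsequent porosity estimate), so it keeps the toolkit uniform. One small housekeeping point you handled correctly but that is worth stating explicitly when writing it up: Theorem~\ref{thm: Du>c}, as the proof reveals, implicitly requires $\overline{B}_r(x)\subset\Omega-\Omega_{\delta_1}$; your appeal to Lemma~\ref{lem: u>0 near bdry}, which puts $\partial E$ at a uniform positive distance $\delta_0(\varepsilon)$ from $\partial\Omega$, exactly supplies this once $d=\mathrm{dist}(x,\partial E)$ is small, and this is what gives precise content to the phrase ``near $\partial E$''.
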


\begin{proof}
The right hand side inequality holds according to the Lipschitz regularity
of the solutions, Theorem \ref{thm: Lip-reg}. To see the left hand
side inequality, we argue indirectly. Assume to the contrary that there exists a sequence $x_{k}\in U$ such that 
\begin{equation}
|\mathbf{u}(x_{k})|\le\frac{1}{k}\mathrm{dist}(x_{k},\partial E).\label{cor:nondeg-Lip:eq1}
\end{equation}
Let $r_{k}=\mathrm{dist}(x_{k},\partial E)$ and define 
\[
\mathbf{u}_{k}(x)=\frac{\mathbf{u}(x_{k}+r_{k}x)}{r_{k}}.
\]
The sequence $\mathbf{u}_{k}$ is uniformly bounded and uniformly Lipschitz in $B_{1}$ due to Lipschitz regularity
of $\mathbf{u}$ and assumption \eqref{cor:nondeg-Lip:eq1}.

Recall that $\Delta_{p}u_{k}^{i}=0$ in $U$, then we
may choose a converging subsequence $\mathbf{u}_{k}\to\mathbf{u}_{0}$ such
that $u_{0}^{i}$ is also $p$-harmonic. Furthermore, by Theorem \ref{thm: Du>c}
we get that 
\[
\sup_{B_{1/2}(0)}|\mathbf{u}_{0}|=\lim_{k\to\infty}\sup_{B_{1/2}(0)}|\mathbf{u}_{k}|\ge m_{\varepsilon}>0,
\]
since $|\mathbf{u}_{k}(0)|>0$. Also, \eqref{cor:nondeg-Lip:eq1} yields that $\mathbf{u}_{0}(0)=0$, which
contradicts the maximum (minimum) principle; remember that each component
of $\mathbf{u}_{0}$ is nonnegative. 
\end{proof}
\begin{cor}
There exists $c=c_{\varepsilon}\in(0,1)$ such that for any $x\in\partial U$
and small enough $r$ we have 
\begin{equation}
c\le\frac{\big|E\cap B_{r}(x)\big|}{\big|B_{r}(x)\big|}\le1-c.\label{thm:porosity-rel1}
\end{equation}
\end{cor}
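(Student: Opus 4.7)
The plan is to prove the upper and lower density bounds separately. The upper bound on $|E\cap B_r(x)|/|B_r(x)|$ (equivalent to positive density of $U$ at the free boundary point $x$) follows readily from the nondegeneracy of Theorem \ref{thm: Du>c} combined with the uniform Lipschitz regularity from Theorem \ref{thm: Lip-reg}. The lower bound (positive density of $E$ at $x$) is more delicate and I would obtain it via a blow-up argument by contradiction.

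For the upper bound, I observe that since $x\in\partial U$ and $U$ is open, the ball $B_{\tau r}(x)$ cannot be contained in $E$ for any fixed $\tau\in(0,1/4)$; otherwise $x$ would lie in the interior of $E$, contradicting $x\in\partial U$. The contrapositive of Theorem \ref{thm: Du>c} thus furnishes an index $i_0$ with $\sup_{B_{r/2}(x)}u^{i_0}\ge m_\varepsilon(\tau)\,r$. Picking $y\in\overline{B}_{r/2}(x)$ with $u^{i_0}(y)\ge m_\varepsilon(\tau)\,r/2$ and invoking the uniform Lipschitz constant $L_\varepsilon$ from Theorem \ref{thm: Lip-reg}, $u^{i_0}$ stays strictly positive on the ball $B_\rho(y)$ with $\rho=m_\varepsilon(\tau)\,r/(4L_\varepsilon)$. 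Choosing $\tau$ small enough to guarantee $r/2+\rho<r$, this ball lies inside $B_r(x)$ and is contained in $U$, giving $|U\cap B_r(x)|\ge c_\varepsilon r^n$.

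For the lower bound I argue by contradiction. Assume there exists a sequence $r_k\downarrow 0$ with $|E\cap B_{r_k}(x)|/|B_{r_k}|\to 0$ and define the rescaled functions $\mathbf{u}_k(y):=\mathbf{u}(x+r_k y)/r_k$ on $B_1$. By Theorem \ref{thm: Lip-reg} these are uniformly bounded and uniformly Lipschitz, so along a subsequence they converge uniformly on $\overline{B}_1$ to a nonnegative Lipschitz limit $\mathbf{u}_\infty$ with $\mathbf{u}_\infty(0)=0$. A rescaled application of Theorem \ref{thm: Du>c} rules out the identical vanishing of $\mathbf{u}_\infty$ on any ball $B_{s/2}(y_0)\subset B_1$: if $\mathbf{u}_\infty\equiv 0$ on such a ball, then for large $k$ each $u_k^i$ would satisfy $\sup_{B_{s/2}(y_0)}u_k^i\le m_\varepsilon(\tau)\,s$, forcing $B_{\tau s}(y_0)\subset\{|\mathbf{u}_k|=0\}$ and contradicting $|\{|\mathbf{u}_k|=0\}\cap B_1|\to 0$. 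Hence the zero set of $\mathbf{u}_\infty$ has empty interior in $B_1$.

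The main obstacle is turning this structural information about $\mathbf{u}_\infty$ into a genuine contradiction. My strategy is to pair Corollary \ref{cor-nondegeneracy}, which passes to the limit in the form $c\,\mathrm{dist}(y,\{|\mathbf{u}_\infty|=0\})\le|\mathbf{u}_\infty(y)|$, with the $p$-harmonicity of each $u_\infty^i$ on $\{|\mathbf{u}_\infty|>0\}$. The upper-bound positive density proved in the first step, applied in the rescaled form to $\mathbf{u}_\infty$ at the origin, forces $\sup_{B_{1/2}}u_\infty^{i_0}\ge c>0$ for some index $i_0$, so $u_\infty^{i_0}$ is a nontrivial nonnegative $p$-harmonic function on the connected component of $\{u_\infty^{i_0}>0\}$ whose closure contains $0$. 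Hopf's lemma (Lemma \ref{lem: Hopf}), applied on a ball inside this component tangent to $0$, then produces a quantitative linear lower bound for $u_\infty^{i_0}$ in a neighborhood of $0$ inside the component, which is incompatible with $u_\infty^{i_0}(0)=0$ once one combines it with the fact that the zero set of $\mathbf{u}_\infty$ must be negligible in $B_1$ (deduced from $|\{|\mathbf{u}_k|=0\}\cap B_1|\to 0$ together with the nondegeneracy preserved in the limit). This contradiction yields the lower bound $|E\cap B_r(x)|\ge c_\varepsilon|B_r(x)|$.
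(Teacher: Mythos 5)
Your upper-bound argument follows the paper's proof in essence: the nondegeneracy from Theorem \ref{thm: Du>c} produces a point in $B_{r/2}(x)$ where $|\mathbf{u}|\gtrsim m_\varepsilon(\tau)\,r$, and the uniform Lipschitz estimate grows a ball of radius comparable to $r$ around that point lying entirely in $U\cap B_r(x)$. That part is sound.

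The lower bound, however, has a genuine gap. You set up the same contradiction hypothesis and blow-up sequence $\mathbf{u}_k$ as the paper, but then you try to extract the contradiction purely from structural facts about $\mathbf{u}_\infty$ (nonnegativity, Lipschitz bound, nondegeneracy, $p$-harmonicity on $\{|\mathbf{u}_\infty|>0\}$, and smallness of the zero set). This cannot work as stated. First, Hopf's lemma applied on a ball $B\subset\{u_\infty^{i_0}>0\}$ tangent to the origin gives $u_\infty^{i_0}\gtrsim\mathrm{dist}(\cdot,\partial B)$, which vanishes precisely at $0$ and is therefore entirely consistent with $u_\infty^{i_0}(0)=0$; there is no incompatibility to exploit. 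Second, $|\{|\mathbf{u}_k|=0\}\cap B_1|\to 0$ does not directly pass to $|\{|\mathbf{u}_\infty|=0\}\cap B_1|=0$: uniform convergence only gives $\{|\mathbf{u}_\infty|>0\}\subset\liminf\{|\mathbf{u}_k|>0\}$, which yields the wrong inequality. Most importantly, the minimality of $\mathbf{u}$ with respect to $J_\varepsilon$ never enters your lower-bound argument, yet it is the essential ingredient. The paper compares $\mathbf{u}$ with its $p$-harmonic replacement $h_k^i$ in $B_{r_k/2}(x_k)$ and uses the optimality (as in Theorem \ref{thm: Du<C}) to get $\int_{B_{1/2}}|\nabla(u_k^i-v_k^i)|^p\,dx\le \frac{C}{\varepsilon\,r_k^n}|\{|\mathbf{u}|=0\}\cap B_{r_k}(x_k)|\to 0$, which forces the blow-up limit $u_0^i$ to differ from a $p$-harmonic function by a constant, hence to be $p$-harmonic across the whole of $B_{1/4}$. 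Only then does the strong minimum principle (applied to a function that is $p$-harmonic at the origin, not merely on the positivity set) give $u_0^i\equiv 0$ and hence the contradiction with nondegeneracy. Without a mechanism to promote $p$-harmonicity from $\{|\mathbf{u}_\infty|>0\}$ to the full ball — and that mechanism is minimality, not measure-theoretic smallness of the zero set — your outline does not close.
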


\begin{proof}
The proof is similar to the proof of Theorem 4.2 in \cite{fotouhi2023minimization}.
By Theorem \ref{thm: Du>c}, there exists $z\in B_{r/2}(x)$ such
that $|\mathbf{u}(z)|\ge m_{\varepsilon}r>0$. Now for any $y\in B_{\tau r}(z)$
we have 
\[
|\mathbf{u}(y)-\mathbf{u}(z)|\le\mathrm{Lip}(\mathbf{u})|y-z|<\mathrm{Lip}(\mathbf{u})\tau r<\frac{m_{\varepsilon}r}{2},
\]
provided that $\tau$ is small enough. Hence we must have $|\mathbf{u}(y)|>\frac{m_{\varepsilon}r}{2}>0$.
This gives the upper estimate in \eqref{thm:porosity-rel1}.

To prove the estimate from below, suppose to the contrary that there
exists a sequence of points $x_{k}\in\partial U$ and radii $r_{k}\to0$
such that 
\[
\big|\{|\mathbf{u}|=0\}\cap B_{r_{k}}(x_{k})\big|<\frac{1}{k}|B_{r_{k}}(x_{k})|=\frac{1}{k}r_{k}^{n}\,|B_{1}|.
\]
Now let us define 
\[
\mathbf{u}_{k}(x)=\frac{\mathbf{u}(x_{k}+r_{k}x)}{r_{k}}.
\]
Note that $\mathbf{u}_{k}(0)=\mathbf{u}(x_{k})=0$, and thus $\mathbf{u}_{k}$
is uniformly bounded and uniformly Lipschitz in $B_{1}=B_{1}(0)$
due to Lipschitz regularity of $\mathbf{u}$. Also 
\[
\big|\{|\mathbf{u}_{k}|=0\}\cap B_{1}\big|=\frac{1}{r_{k}^{n}}\big|\{|\mathbf{u}|=0\}\cap B_{r_{k}}(x_{k})\big|\underset{k\to\infty}{\longrightarrow}0.
\]
Let $v_{k}^{i}$ be a $p$-harmonic function in $B_{1/2}$ with boundary
data $v_{k}^{i}=u_{k}^{i}$ on $\partial B_{1/2}$. Then $h_{k}^{i}(x)=r_{k}v_{k}^{i}(\frac{x-x_{k}}{r_{k}})$
is a $p$-harmonic function in $B_{r_{k}/2}(x_{k})$ with boundary
data $h_{k}^{i}=u^{i}$ on $\partial B_{r_{k}/2}(x_{k})$. Now, similarly
to the proof of Theorem \ref{thm: Du<C}, we can show that 
\begin{align}
\int_{B_{1/2}}|\nabla(u_{k}^{i}-v_{k}^{i})|^{p}\,dx & =\frac{1}{r_{k}^{n}}\int_{B_{r_{k}/2}(x_{k})}|\nabla(u^{i}-h_{k}^{i})|^{p}\,dx\label{thm:porosity-rel2}\\
 & \le\frac{C}{\varepsilon}\frac{1}{r_{k}^{n}}\big|\{|\mathbf{u}|=0\}\cap B_{r_{k}}(x_{k})\big|\underset{k\to\infty}{\longrightarrow}0.\nonumber 
\end{align}
(Note that the constant $C$ does not depend on the radius $r_{k}$
or the point $x_{k}$.) 

Since $u_{k}^{i}$ and therefore $v_{k}^{i}$ are uniformly Lipschitz
in $B_{1/4}$, we may assume that $u_{k}^{i}\to u_{0}^{i}$ and $v_{k}^{i}\to v_{0}^{i}$
uniformly in $B_{1/4}$. Observe that $\Delta_{p}v_{0}^{i}=0$, and
\eqref{thm:porosity-rel2} implies that $u_{0}^{i}=v_{0}^{i}+C$ for
some constant $C$. Thus $\Delta_{p}u_{0}^{i}=0$ in $B_{1/4}$ and
from the strong minimum principle it follows $u_{0}^{i}\equiv0$ in
$B_{1/4}$, since $u_{0}^{i}\ge0$ and $u_{0}^{i}(0)=\lim u_{k}^{i}(0)=0$.
On the other hand the nondegeneracy property, Theorem \ref{thm: Du>c},
implies that (since $x_{k}$ is not in the interior of $\{|\mathbf{u}|=0\}$)
\[
\|\mathbf{u}_{k}\|_{L^{\infty}(B_{1/4})}=\frac{1}{r_{k}}\|\mathbf{u}\|_{L^{\infty}(B_{r_{k}/4}(x_{k}))}\ge m_{\varepsilon}/2>0.
\]
Therefore we get $\|\mathbf{u}_{0}\|_{L^{\infty}(B_{1/4})}\ge m_{\varepsilon}/2$,
which is a contradiction.
\end{proof}
Hence we can apply the results in section 4 of \cite{alt1981existence}
and in section 3 of  \cite{alt1984free}  to conclude (see also sections 5 and 6 of \cite{fotouhi2023minimization}) 
\begin{thm} \label{Thm:Hausdorff-dim-FB}
Let $\mathbf{u}=\mathbf{u}_{\varepsilon}$ be a minimizer of $J_{\varepsilon}$
over $V$. Then we have 
\begin{enumerate}
\item The $(n-1)$-dimensional Hausdorff measure of $\partial E$ is locally
finite, i.e. $\mathcal{H}^{n-1}(\Omega'\cap\partial E)<\infty$ for
every $\Omega'\subset\subset \Omega$. Moreover, there exist positive constants
$c_{\varepsilon},C_{\varepsilon}$, depending on $n,p,\Omega,\Omega',\varepsilon$,
such that for each ball $B_{r}(x)\subset\Omega'$ with $x\in\partial E$
we have 
\[
c_{\varepsilon}r^{n-1}\le\mathcal{H}^{n-1}(B_{r}(x)\cap\partial E)\le C_{\varepsilon}r^{n-1}.
\]
\item There exist Borel functions $q^{i}=q_{\varepsilon}^{i}$
such that \[\Delta_{p}u^{i}=q^{i}\,\mathcal{H}^{n-1}\mres\partial E,\]that is, for any $\zeta\in C_{0}^{\infty}(\Omega)$ we have 
\[
-\int_{\Omega}A[u^{i}]\cdot\nabla\zeta\,dy=\int_{\partial E}\zeta q^{i}\,d\mathcal{H}^{n-1}.
\]

\item For $\mathcal{H}^{n-1}$-a.e. points $x\in\partial E$ we have 
\[
c_{\varepsilon}\le \sum_{i=1}^{m} q^{i}(x)\le C_{\varepsilon}.
\]
\item For $\mathcal{H}^{n-1}$-a.e. points $x\in\partial E$
an outward unit normal $\nu=\nu_{E}(x)$ is defined, and 
\[
u^{i}(x+y)=(q^{i}(x))^{\frac{1}{p-1}}(y\cdot\nu)^{+}+o(|y|),
\]
which allows us to define
$A_{\nu}u^{i}(x)= q^{i}(x)$ at those points. 
\item The reduced boundary $\partial_{\mathrm{red}}E$ satisfies $\mathcal{H}^{n-1}(\partial E-\partial_{\mathrm{red}}E)=0$.
\end{enumerate}
\end{thm}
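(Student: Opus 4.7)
The plan is to invoke the Alt--Caffarelli free boundary machinery of \cite{alt1981existence, alt1984free}, whose key hypotheses have been verified in the preceding results: Lipschitz regularity of $\mathbf{u}$ (Theorem \ref{thm: Lip-reg}), linear nondegeneracy (Corollary \ref{cor-nondegeneracy}), and uniform positive density of both $E$ and $U$ at every free boundary point (the corollary immediately above). Lemma \ref{lem:coperative} ensures that $\partial E = \partial\{u^i > 0\}$ for every $i$, which is convenient when working componentwise. I would establish part (2) first, then deduce (1), (5), (4), and finally (3).

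For part (2), observe that $\Delta_p u^i$ is a nonnegative Radon measure by definition of $V$; it vanishes on $U$ by Theorem \ref{thm: Lip-reg}, and on $\mathrm{int}(E)$ where $u^i \equiv 0$. Hence $\mathrm{supp}(\Delta_p u^i) \subset \partial E$. Testing with a cutoff $\zeta$ satisfying $\chi_{B_r(x)} \le \zeta \le \chi_{B_{2r}(x)}$ and $|\nabla \zeta| \le C/r$, together with the Lipschitz bound $|\nabla u^i| \le L_\varepsilon$, yields
\[
\Delta_p u^i(B_r(x)) \le \int_\Omega |\nabla \zeta|\,|\nabla u^i|^{p-1}\,dy \le C L_\varepsilon^{p-1}\, r^{n-1}.
\]
Combined with the Radon--Nikodym-type representation argument of Lemma 4.5 in \cite{alt1981existence}, the linear growth of $\Delta_p u^i$ together with local finiteness of $\mathcal{H}^{n-1}\mres\partial E$ (established next) gives $\Delta_p u^i = q^i\, \mathcal{H}^{n-1} \mres \partial E$ for some nonnegative Borel $q^i$.

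For part (1), the upper $\mathcal{H}^{n-1}$-estimate follows from a Besicovitch--Vitali covering of $\partial E \cap B_r$ by balls of comparable radius $\rho_j$: nondegeneracy supplies a point in each with some component $\gtrsim \rho_j$, which via Hopf's lemma (Lemma \ref{lem: Hopf}) and integration yields $\Delta_p u^{i_j}(B_{2\rho_j}(y_j)) \gtrsim \rho_j^{n-1}$; summing and comparing to the global measure bound controls the packing number. The lower $\mathcal{H}^{n-1}$-estimate follows from positive density on both sides and the relative isoperimetric inequality, which also show $E$ has locally finite perimeter. Part (5) is then the De Giorgi structure theorem together with the two-sided positive density. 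For part (4), fix $x_0 \in \partial_{\mathrm{red}} E$ with outward normal $\nu$ and consider the rescalings $u^i_r(y) := u^i(x_0 + r y)/r$. Uniform Lipschitz control yields a subsequential limit $u^i_0$; the two-sided density estimate forces Hausdorff convergence of the free boundaries to the hyperplane $\{y \cdot \nu = 0\}$, so $u^i_0$ is a nonnegative Lipschitz $p$-harmonic function on $\{y\cdot\nu > 0\}$ vanishing on $\{y \cdot \nu \le 0\}$. A Liouville-type argument gives $u^i_0(y) = \alpha_i (y \cdot \nu)^+$, and matching the blow-up of $\Delta_p u^i = q^i \mathcal{H}^{n-1}\mres\partial E$ at an $\mathcal{H}^{n-1}$-Lebesgue point of $q^i$ with $\Delta_p u^i_0 = \alpha_i^{p-1} \mathcal{H}^{n-1}\mres\{y\cdot\nu = 0\}$ forces $\alpha_i = q^i(x_0)^{1/(p-1)}$. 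Part (3) is then immediate: the upper bound is the linear growth of $\Delta_p u^i$ at $\mathcal{H}^{n-1}$-Lebesgue points, while the lower bound follows from applying nondegeneracy to the blow-up, forcing $\max_i \alpha_i \gtrsim 1/m$.

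The main obstacle I expect is the Hausdorff convergence of the free boundaries under blow-up in part (4): one must rule out the scenario where the free boundary ``flattens'' or collapses in the limit, ensuring that $u^i_0$ vanishes on a full half-space rather than merely on the hyperplane. This is precisely where the two-sided density estimate of the preceding corollary is indispensable, and is the most delicate ingredient being imported from \cite{alt1981existence, alt1984free}. Fortunately, the vector-valued structure and the nonlinearity of $\Delta_p$ introduce no essential new difficulty in this step thanks to Lemma \ref{lem:coperative} and the scale invariance of the $p$-Laplacian.
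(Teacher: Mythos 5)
Your proposal is correct and takes essentially the same approach as the paper: the paper's own proof of this theorem is a one-line citation to the Alt--Caffarelli representation machinery (Section 4 of \cite{alt1981existence}, Section 3 of \cite{alt1984free}, Sections 5--6 of \cite{fotouhi2023minimization}) after noting that the required hypotheses --- Lipschitz regularity, nondegeneracy, and two-sided density --- have already been established, and your sketch unpacks exactly those cited arguments (with the reduction to componentwise statements justified by Lemma \ref{lem:coperative}). One minor slip in part (3): nondegeneracy applied to the blow-up forces $\max_i \alpha_i \ge c\, m_\varepsilon$ (hence $\sum_i q^i \ge c\, m_\varepsilon^{p-1}$), not ``$\gtrsim 1/m$''.
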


\medskip

\section{The Original Problem}

In this section we will show that for $\varepsilon>0$ small enough,
a minimizer of $J_{\varepsilon}$ over $V$ satisfies $\big|\{|\mathbf{u}_{\varepsilon}|>0\}\big|=1$,
and hence it can be regarded as a solution to our original problem
(\ref{eq: main_eq}). Remember that 
\[
U=U_{\varepsilon}=\{|\mathbf{u}_{\varepsilon}|>0\},\qquad E=E_{\varepsilon}=\{|\mathbf{u}_{\varepsilon}|=0\}.
\]
Note that by Lemma \ref{lem: u>0 near bdry}, the free boundary $\partial E$
has a positive distance from the fixed boundary $\partial\Omega$.
We say $x\in\partial E$ is a \textit{regular point} of the free boundary
if it satisfies (3) and (4) in Theorem \ref{Thm:Hausdorff-dim-FB}%
. The set of such regular points of the free boundary will be denoted
by $\mathcal{R}=\mathcal{R}_{\varepsilon}$; Theorem \ref{Thm:Hausdorff-dim-FB}
shows that $\mathcal{H}^{n-1}(\partial E-\mathcal{R})=0$.
\begin{lem}
\label{lem: q<C}There is a constant $C>0$, independent of $\varepsilon$,
such that 
\[
\inf_{\mathcal{R}_{\varepsilon}}\Big(\sum_{i\le m}q_{\varepsilon}^{i}\Big)\le C.
\]
\end{lem}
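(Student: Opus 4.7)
The plan is an averaging argument: first bound the total integrated flux $\sum_i \int_{\partial E} q^i\, d\mathcal{H}^{n-1}$ uniformly in $\varepsilon$, and then extract the pointwise $\inf$ bound using a uniform lower bound on $\mathcal{H}^{n-1}(\partial E)$.

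For the flux bound, Theorem \ref{Thm:Hausdorff-dim-FB}(2) gives $\Delta_p u^i = q^i\, \mathcal{H}^{n-1}\mres\partial E$, so using the divergence identity $\int_\Omega \Delta_p u^i\,dx = \int_{\partial\Omega} A_\nu u^i\,d\sigma$ (stated right after Lemma \ref{lem: L2 bd der}),
\[
\sum_i \int_{\partial E} q^i\, d\mathcal{H}^{n-1} = \sum_i \int_{\partial\Omega} A_\nu u^i\, d\sigma .
\]
The coercivity \eqref{eq: Gamma coercive} together with $\inf_{\partial\Omega} \psi_i > 0$ bounds the right-hand side by $C\int_{\partial\Omega} \Gamma(\cdot, A_\nu \mathbf{u})\,d\sigma + C$. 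Minimality of $\mathbf{u}_\varepsilon$ against the fixed competitor $\mathbf{u}_0$ used in the proof of Corollary \ref{cor: u_e,d Lip} yields $\int_{\partial\Omega}\Gamma(\cdot, A_\nu \mathbf{u}_\varepsilon)\,d\sigma \le J_\varepsilon(\mathbf{u}_0) \le C$ independently of $\varepsilon$, giving $\sum_i \int_{\partial E} q^i\, d\mathcal{H}^{n-1} \le M$ for a universal constant $M$.

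For the lower bound on $\mathcal{H}^{n-1}(\partial E_\varepsilon)$, restrict to $\varepsilon \le \varepsilon_0$ (for larger $\varepsilon$ either the claim holds by the same constants or $\mathcal{R}_\varepsilon = \emptyset$ and the statement is vacuous). The estimate $f_\varepsilon(V_{u_\varepsilon}) \le J_\varepsilon(\mathbf{u}_0) \le C$ gives $V_{u_\varepsilon}\le 1 + C\varepsilon$, whence $|E_\varepsilon|\ge (|\Omega|-1)/2 > 0$. For the reverse inequality $V_{u_\varepsilon} \ge c_1 > 0$, a Poincaré/$p$-capacity argument using the uniform bound $\|\nabla\mathbf{u}_\varepsilon\|_{L^p(\Omega)} \le C$ from Lemma \ref{lem: L2 bd der} shows that $u^i$ cannot transition from $\varphi^i\ge c_0 > 0$ on $\partial\Omega$ to $0$ across an arbitrarily thin region without violating the $L^p$ gradient bound. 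With both $|E_\varepsilon|$ and $V_{u_\varepsilon}$ bounded away from $0$ uniformly, the relative isoperimetric inequality applied to $E_\varepsilon$ (which has finite perimeter by Theorem \ref{Thm:Hausdorff-dim-FB}(1)) gives
\[
\mathcal{H}^{n-1}(\partial E_\varepsilon)\ge P(E_\varepsilon;\Omega) \ge c_I \min(|E_\varepsilon|, V_{u_\varepsilon})^{(n-1)/n} \ge c_0 > 0,
\]
where we used $\mathcal{H}^{n-1}(\partial E\setminus \partial_{\mathrm{red}} E)=0$ from Theorem \ref{Thm:Hausdorff-dim-FB}(5).

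Combining the two steps,
\[
\inf_{\mathcal{R}_\varepsilon} \sum_i q^i \;\le\; \frac{1}{\mathcal{H}^{n-1}(\mathcal{R}_\varepsilon)}\sum_i \int_{\mathcal{R}_\varepsilon} q^i\, d\mathcal{H}^{n-1} \;\le\; \frac{M}{c_0} \;=:\; C,
\]
which is the asserted bound. The main obstacle is the uniform lower bound on $V_{u_\varepsilon}$ (and thus on $\mathcal{H}^{n-1}(\partial E_\varepsilon)$); the global flux inequality and the final averaging step are immediate from the by-parts identities and the structural hypotheses on $\Gamma$.
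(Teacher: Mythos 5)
Your proposal matches the paper's argument almost step for step: compete with the fixed $\mathbf{u}_0$ to bound $J_\varepsilon(\mathbf{u}_\varepsilon)$, deduce $|U|\le 1+C\varepsilon$ and hence $|E|\ge (|\Omega|-1)/2$, get a uniform lower bound on $\mathcal{H}^{n-1}(\partial E)$ by an isoperimetric inequality, use the divergence identity to convert $\int_{\partial E}q^i$ into a boundary integral over $\partial\Omega$, control that via coercivity of $\Gamma$ and minimality, and finish by averaging. One small remark: the lower bound on $V_{u_\varepsilon}=|U|$ that you single out as the ``main obstacle'' is not actually needed. Since $E$ is a closed set of finite perimeter compactly contained in $\Omega$ (Lemma \ref{lem: u>0 near bdry} ensures $|\mathbf{u}_\varepsilon|>0$ near $\partial\Omega$), the plain isoperimetric inequality in $\mathbb{R}^n$ already gives $\mathcal{H}^{n-1}(\partial E)\ge P(E;\mathbb{R}^n)\ge c|E|^{(n-1)/n}\ge c'>0$, with no need for a relative isoperimetric inequality or the Poincar\'e/capacity detour. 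The paper takes exactly this route, so your capacity step is correct but superfluous.
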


\begin{rem*}
Note that $\sum_{i\le m}q_{\varepsilon}^{i}\ge c_{\varepsilon}>0$
by Theorem \ref{Thm:Hausdorff-dim-FB}. 
{} 
\end{rem*}
\begin{proof}
Let $\Omega'\subset\subset\Omega$ be a smooth open set with $|\Omega-\Omega'|=1$.
Let $\mathbf{u}_{0}$ be a vector-valued function on $\Omega-\Omega'$
that satisfies the equation $\Delta_{p}u_{0}^{i}=0$, and takes the
boundary values $\boldsymbol{\varphi}$ on $\partial\Omega$ and $0$
on $\partial\Omega'$. Then for some small enough $\delta_{0}$ we
have $\mathbf{u}_{0}\in V_{\delta_{0}}\subset V$; hence 
\begin{align*}
C=\int_{\partial\Omega}\Gamma(x,A_{\nu}\mathbf{u}_{0})\,d\sigma+1 & =J_{\varepsilon}(\mathbf{u}_{0})\ge J_{\varepsilon}(\mathbf{u}_{\varepsilon})\\
 & =\int_{\partial\Omega}\Gamma(x,A_{\nu}\mathbf{u}_{\varepsilon})\,d\sigma+f_{\varepsilon}\big(\big|\{|\mathbf{u}_{\varepsilon}|>0\}\big|\big)\\
 & \ge\int_{\partial\Omega}\sum_{i=1}^{m}\psi_{i}A_{\nu}u_{\varepsilon}^{i}-C\,d\sigma+f_{\varepsilon}\big(\big|\{|\mathbf{u}_{\varepsilon}|>0\}\big|\big)\\
 & \ge C\sum_{i=1}^{m}\int_{\Omega}|\nabla u_{\varepsilon}^{i}|^{p}\,dx-C+f_{\varepsilon}\big(\big|\{|\mathbf{u}_{\varepsilon}|>0\}\big|\big)\\
 & \ge-C+\frac{1}{\varepsilon}\big(\big|\{|\mathbf{u}_{\varepsilon}|>0\}\big|-1\big),
\end{align*}
where we have used (\ref{eq: Gamma coercive}) and Lemma \ref{lem: L2 bd der}.
Thus we get the bound
\[
|U|=\big|\{|\mathbf{u}_{\varepsilon}|>0\}\big|\le1+C\varepsilon.
\]
Note that $J_{\varepsilon}(\mathbf{u}_{0})$, and thus $C$, does
not depend on $\varepsilon$ due to the definition of $f_{\varepsilon}$.
As a result, we have a lower bound for the volume of $E$. Hence,
by the isoperimetric inequality, we have a lower bound for $\mathcal{H}^{n-1}(\partial E)$,
independent of $\varepsilon$. Now note that (keep in mind that $\nu_{E}$
points to the interior of $U$) 
\[
\int_{\partial\Omega}A_{\nu}u_{\varepsilon}^{i}\,d\sigma-\int_{\partial E}A_{\nu}u_{\varepsilon}^{i}\,d\mathcal{H}^{n-1}=\int_{U}\Delta_{p}u_{\varepsilon}^{i}\,dx=0.
\]
Therefore we get 
\begin{align*}
\int_{\partial E}A_{\nu}u_{\varepsilon}^{i}\,d\mathcal{H}^{n-1}=\int_{\partial\Omega}A_{\nu}u_{\varepsilon}^{i}\,d\sigma & =\int_{\Omega}\Delta_{p}u_{\varepsilon}^{i}\,dx\\
 & \le C+C\int_{\partial\Omega}\psi_{i}A_{\nu}u_{\varepsilon}^{i}\,d\sigma,
\end{align*}
where the last inequality follows from the remark below Lemma \ref{lem: L2 bd der}.
Thus we have 
\begin{align*}
\inf_{\mathcal{R}_{\varepsilon}}\Big(\sum_{i\le m}A_{\nu}u_{\varepsilon}^{i}\Big)\mathcal{H}^{n-1}(\partial E) & \le\int_{\partial E}\sum_{i\le m}A_{\nu}u_{\varepsilon}^{i}\,d\mathcal{H}^{n-1}\\
 & \le C+C\int_{\partial\Omega}\sum_{i\le m}\psi_{i}A_{\nu}u_{\varepsilon}^{i}\,d\sigma\\
 & \le C+C\int_{\partial\Omega}\Gamma\big(x,A_{\nu}\mathbf{u}_{\varepsilon}\big)\,d\sigma\tag{by \eqref{eq: Gamma coercive}}\\
 & \le C+CJ_{\varepsilon}(\mathbf{u}_{\varepsilon})\le C+CJ_{\varepsilon}(\mathbf{u}_{0})\le C,
\end{align*}
which gives the desired (noting that $q^{i}=A_{\nu}u_{\varepsilon}^{i}$ by Theorem \ref{Thm:Hausdorff-dim-FB}). 
\end{proof}
\begin{lem}
\label{lem: vol >1}For small enough $\varepsilon$ we have 
\[
\big|\{|\mathbf{u}_{\varepsilon}|>0\}\big|\ge1.
\]
\end{lem}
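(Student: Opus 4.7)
The plan is to argue by contradiction: suppose $|U_\varepsilon|<1$ along a sequence $\varepsilon\to 0$, and for each such $\varepsilon$ I will construct a competitor $\mathbf v\in V$ with $J_\varepsilon(\mathbf v)<J_\varepsilon(\mathbf u_\varepsilon)$. The construction begins by using Lemma \ref{lem: q<C} to select a regular free-boundary point $x_\varepsilon\in\mathcal R_\varepsilon$ at which $\sum_i q^i_\varepsilon(x_\varepsilon)\le C$ uniformly in $\varepsilon$; I then fix a small ball $B:=B_r(x_\varepsilon)$ with $|B|<1-|U_\varepsilon|$ and let $\mathbf v$ be the function furnished by Lemma \ref{lem: aux obst pr} with this choice of $B$. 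Then $u^i_\varepsilon\le v^i\le C_0$, $\mathbf v\in V$, $\{|\mathbf v|>0\}\setminus U_\varepsilon\subseteq B\cap E_\varepsilon$, and $|\{|\mathbf v|>0\}|\le|U_\varepsilon|+|B|<1$, so that $f_\varepsilon(|\{|\mathbf v|>0\}|)-f_\varepsilon(|U_\varepsilon|)=\varepsilon(|\{|\mathbf v|>0\}|-|U_\varepsilon|)\le\varepsilon\,|B\cap E_\varepsilon|$.

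Next, running the chain of inequalities that opens the proof of Theorem \ref{thm: Du<C} (convexity of $\Gamma$ via property~(2), the integration-by-parts identity \eqref{eq: by parts 2}, and the vanishing of $\int u^i_\varepsilon\,\Delta_pu^i_\varepsilon\,dx$ and $\int v^i\,\Delta_pv^i\,dx$), together with the minimality $J_\varepsilon(\mathbf u_\varepsilon)\le J_\varepsilon(\mathbf v)$, I expect to obtain
\[
\sum_i\int_\Omega\bigl(|\nabla u^i_\varepsilon|^p-|\nabla v^i|^p\bigr)\,dx\;\le\;\tfrac{C_0}{C_a}\,\varepsilon\,|B\cap E_\varepsilon|.
\]

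The contradiction will then follow from a matching lower bound of the form $\sum_i\int_\Omega(|\nabla u^i_\varepsilon|^p-|\nabla v^i|^p)\,dx\ge c\,|B\cap E_\varepsilon|$ with $c>0$ independent of $\varepsilon$. To establish it I set $\phi^i:=v^i-u^i_\varepsilon\ge 0$, which vanishes on $\partial\Omega$ and on $E_\varepsilon\setminus B$; integrating by parts against $\Delta_pu^i_\varepsilon=q^i_\varepsilon\mathcal H^{n-1}\mres\partial E_\varepsilon$ reduces the desired estimate to a lower bound on $\sum_i\int_{\partial E_\varepsilon\cap B}\phi^i\,q^i_\varepsilon\,d\mathcal H^{n-1}$. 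Near $x_\varepsilon$ the blow-up formula in Theorem \ref{Thm:Hausdorff-dim-FB}(4) makes $\partial E_\varepsilon$ essentially a hyperplane, so $v^i$ is close to the $p$-harmonic filler of an approximate half-ball vanishing on its flat side; Hopf's Lemma \ref{lem: Hopf} then delivers $\phi^i\gtrsim r$ on a large portion of $\partial E_\varepsilon\cap B_{r/2}(x_\varepsilon)$, and the surface-density estimate in Theorem \ref{Thm:Hausdorff-dim-FB}(1) closes out the lower bound, yielding $c\le(C_0/C_a)\varepsilon$ — impossible for small $\varepsilon$. The hard part will be ensuring that $c$ is genuinely $\varepsilon$-independent: the Hopf step needs $\sum_i q^i_\varepsilon(x_\varepsilon)$ to admit a \emph{positive} lower bound uniform in $\varepsilon$, whereas Lemma \ref{lem: q<C} controls it only from above. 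I plan to extract such a uniform positive lower bound from a uniform estimate on $\int_{\partial\Omega}A_\nu u^i_\varepsilon\,d\sigma=\int_{\partial E_\varepsilon}q^i_\varepsilon\,d\mathcal H^{n-1}$, obtained via a Poincar\'e--trace inequality applied to $u^i_\varepsilon-\varphi^i\in W_0^{1,p}(\Omega)$ together with the uniform positivity $|E_\varepsilon|\ge|\Omega|-1-C\varepsilon$ that is implicit in the proof of Lemma \ref{lem: q<C}.
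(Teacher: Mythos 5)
Your overall contradiction scheme — compare $\mathbf u_\varepsilon$ with the obstacle minimizer $\mathbf v$ from Lemma \ref{lem: aux obst pr} over a ball $B$, get an upper bound $\varepsilon\,|B\cap E_\varepsilon|$ from the slope of $f_\varepsilon$ on $(-\infty,1)$, and try to match it with an $\varepsilon$-independent lower bound $c\,|B\cap E_\varepsilon|$ — is the same skeleton as the paper's proof. Where the two part ways, and where your argument has a genuine gap, is in how the $\varepsilon$-independent lower bound is produced.

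You correctly flag that the Hopf step needs a positive lower bound, uniform in $\varepsilon$, on the linear growth of $u^i_\varepsilon$ near your chosen point $x_\varepsilon$, and you try to obtain it from a positive lower bound on $\sum_i q^i_\varepsilon(x_\varepsilon)$ extracted from a Poincar\'e--trace estimate on $\int_{\partial E_\varepsilon}\sum_i q^i_\varepsilon\,d\mathcal H^{n-1}$. This step does not go through. A uniform lower bound on $\int_{\partial E_\varepsilon}\sum_i q^i_\varepsilon\,d\mathcal H^{n-1}$ is an \emph{integral} statement; it is perfectly compatible with $\sum_i q^i_\varepsilon$ being arbitrarily small on a set of positive $\mathcal H^{n-1}$-measure (Theorem \ref{Thm:Hausdorff-dim-FB}(3) only gives $\sum_i q^i_\varepsilon\ge c_\varepsilon$, with $c_\varepsilon$ possibly degenerating). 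Your $x_\varepsilon$ is chosen to nearly \emph{minimize} $\sum_i q^i_\varepsilon$, i.e.\ at exactly the place where you have no lower control. There is a second, related gap: even granting a positive pointwise lower bound on $q^i_\varepsilon(x_\varepsilon)$, you would still rely on the blow-up expansion of Theorem \ref{Thm:Hausdorff-dim-FB}(4) holding at a \emph{fixed} scale uniformly in $\varepsilon$; the $o(|y|)$ term is, a priori, controlled only at scales depending on $\varepsilon$ and on the point.

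The paper sidesteps these difficulties entirely by not working at a regular free-boundary point, but at the special point $x_0(\varepsilon)$ where an expanding exterior ball $B_{\delta_0+t}(z_0)$ first touches $\partial E_\varepsilon$ (here $z_0\in\Omega^c$ and $\delta_0>0$ are fixed, with $\partial B_{\delta_0}(z_0)$ tangent to $\partial\Omega$). Inside the annulus $B_{\delta_0+t}(z_0)\setminus \overline B_{\delta_0}(z_0)$ one compares $u^i_\varepsilon$ with the $p$-harmonic barrier $v$ equal to $0$ on the outer sphere and $c_0=\min_i\min_{\partial\Omega}\varphi^i$ on the inner sphere; by maximum principle $u^i_\varepsilon\ge v$ there, and a variant of Hopf's lemma \ref{lem: Hopf} gives $v(x)\ge c\,c_0\,\mathrm{dist}(x,\partial B_{\delta_0+t}(z_0))$ with $c=c(n,p,\delta_0)$ independent of $\varepsilon$. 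This yields $\frac1r\sup_{B_{r/2}(x_0)}u^i_\varepsilon\ge cc_0/2$ uniformly in $\varepsilon$, and then Lemma \ref{lem: |u is 0| bdd} converts this into the desired lower bound $C\,|B_r(x_0)\cap E_\varepsilon|\le\sum_i\int_{B_r(x_0)}|\nabla(u^i_\varepsilon-h^i)|^p\,dy$. No control of $q^i_\varepsilon$ is needed at all, nor any uniformity of blow-ups. To repair your proof you would need to supply precisely this kind of geometric, $\varepsilon$-independent linear growth at some accessible free-boundary point; the route through Lemma \ref{lem: q<C} and a trace inequality does not furnish it.
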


\begin{proof}
Consider a point $z_{0}\in\Omega^{c}$ which has distance $\delta_{0}$
from $\partial\Omega$. Then the ball $B_{\delta_{0}}(z_{0})$ is
an exterior tangent ball to $\partial\Omega$. Let $t=t(\varepsilon)$
be the first time at which $\partial B_{\delta_{0}+t}(z_{0})$ intersects
$\partial\{|\mathbf{u}_{\varepsilon}|=0\}$, at a point $x_{0}=x_{0}(\varepsilon)$.
Now let $v$ be a $p$-harmonic function in $B_{\delta_{0}+t}(z_{0})-\overline{B}_{\delta_{0}}(z_{0})$
with boundary values $0$ on $\partial B_{\delta_{0}+t}(z_{0})$ and
$c_{0}$ on $\partial B_{\delta_{0}}(z_{0})$, where $c_{0}=\min_{i}\min_{\partial\Omega}\varphi^{i}>0$.
Then on $\partial\big(\Omega\cap B_{\delta_{0}+t}(z_{0})\big)$ we
have $v\le u^{i}$; so by the maximum principle we have $v\le u^{i}$
in $\Omega\cap B_{\delta_{0}+t}(z_{0})$. However, by an easy modification
of the proof of Hopf's lemma (Lemma \ref{lem: Hopf}), we can see
that 
\[
v(x)\ge cc_{0}\,\mathrm{dist}\big(x,\partial B_{\delta_{0}+t}(z_{0})\big),
\]
where the constant $c$ only depends on $n,p,\delta_{0}$. Therefore,
for points $x$ in the line segment between $x_{0},z_{0}$ we have
\[
u^{i}(x)\ge v(x)\ge cc_{0}\,\mathrm{dist}\big(x,\partial B_{\delta_{0}+t}(z_{0})\big)=cc_{0}|x-x_{0}|.
\]
Now consider the ball $B_{r}(x_{0})$ for small enough $r$. Then
we have 
\[
\frac{1}{r}\sup_{B_{r/2}(x_{0})}u^{i}\ge\frac{1}{r}cc_{0}\frac{r}{2}=\frac{cc_{0}}{2},
\]
independently of $\varepsilon$. 

Let $\mathbf{h}$ be the vector-valued function which satisfies $\Delta_{p}h^{i}=0$
in $B_{r}(x_{0})$, and is equal to $\mathbf{u}$ in $\Omega-B_{r}(x_{0})$.
By Lemma \ref{lem: |u is 0| bdd} and the fact that $h^{i}\ge u^{i}$
we have 
\begin{align*}
\int_{B_{r}(x_{0})}|\nabla(u^{i}-h^{i})|^{p}\,dy & \ge C\Big(\frac{1}{r}\sup_{B_{r/2}(x_{0})}h^{i}\Big)^{p}\cdot\big|B_{r}(x_{0})\cap\{u^{i}=0\}\big|\\
 & \ge C\Big(\frac{1}{r}\sup_{B_{r/2}(x_{0})}u^{i}\Big)^{p}\cdot\big|B_{r}(x_{0})\cap\{u^{i}=0\}\big|\\
 & \ge C\big|B_{r}(x_{0})\cap\{u^{i}=0\}\big|\ge C\big|B_{r}(x_{0})\cap\{|\mathbf{u}|=0\}\big|.
\end{align*}
Next let $\mathbf{v}$ be the function given by Lemma \ref{lem: aux obst pr}
for $B_{r}(x_{0})$. We know that $J_{\varepsilon}(\mathbf{u})\le J_{\varepsilon}(\mathbf{v})$.
Then similarly to the proof of Theorem \ref{thm: Du<C} we can see
that 
\begin{align*}
C\sum_{i\le m}\int_{B_{r}(x_{0})}|\nabla(u^{i}-h^{i})|^{p}\,dy & \le\int_{\partial\Omega}\Gamma(x,A_{\nu}\mathbf{u})-\Gamma(x,A_{\nu}\mathbf{v})\,d\sigma\\
 & \le f_{\varepsilon}\big(\big|\{|\mathbf{v}|>0\}\big|\big)-f_{\varepsilon}\big(\big|\{|\mathbf{u}|>0\}\big|\big).
\end{align*}
(A closer inspection of the proof of Theorem \ref{thm: Du<C} reveals
that the constant $C$ in the above estimate only depends on $n,p,\Omega,\boldsymbol{\varphi},\Gamma$.) 

Now suppose to the contrary that $\big|\{|\mathbf{u}|>0\}\big|<1$.
Then, since $0\le u^{i}\le v^{i}$, and outside of $B_{r}(x_{0})$,
$|\mathbf{u}|=0$ implies $|\mathbf{v}|=0$, we have 
\[
\big|\{|\mathbf{v}|>0\}\big|\le\big|\{|\mathbf{u}|>0\}\big|+\big|B_{r}(x_{0})\cap\{|\mathbf{u}|=0\}\big|<1
\]
for small enough $r$. Hence (using the monotonicity of $f_{\varepsilon}$)
we have 
\begin{align*}
 & f_{\varepsilon}\big(\big|\{|\mathbf{v}|>0\}\big|\big)-f_{\varepsilon}\big(\big|\{|\mathbf{u}|>0\}\big|\big)\\
 & \qquad\qquad\le f_{\varepsilon}\Big(\big|\{|\mathbf{u}|>0\}\big|+\big|B_{r}(x_{0})\cap\{|\mathbf{u}|=0\}\big|\Big)-f_{\varepsilon}\big(\big|\{|\mathbf{u}|>0\}\big|\big)\\
 & \qquad\qquad=\varepsilon\big|B_{r}(x_{0})\cap\{|\mathbf{u}|=0\}\big|.
\end{align*}
Combining this estimate with the estimates of the above paragraph,
and using (\ref{thm:porosity-rel1}), we obtain 
\[
0<C\big|B_{r}(x_{0})\cap\{|\mathbf{u}|=0\}\big|\le\varepsilon\big|B_{r}(x_{0})\cap\{|\mathbf{u}|=0\}\big|,
\]
which gives a positive lower bound for $\varepsilon$, and results
in a contradiction. 
\end{proof}
\begin{thm}
\label{thm: vol is 1}
When $\varepsilon$ is small enough we have
\[
\big|\{|\mathbf{u}_{\varepsilon}|>0\}\big|=1.
\]
\end{thm}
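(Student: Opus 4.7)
The plan is to combine Lemma~\ref{lem: vol >1}, which yields $|U_\varepsilon| \geq 1$ for small $\varepsilon$, with the reverse bound $|U_\varepsilon| \leq 1$ proved by contradiction via a ``shrinking'' perturbation modeled on the proof of Theorem~\ref{thm: Du>c}. Suppose toward contradiction that $|U_\varepsilon| > 1$ for arbitrarily small $\varepsilon > 0$. Pick a regular free boundary point $x_0 \in \mathcal{R}_\varepsilon$, and for $\tau \in (0, 1/4)$ and $r > 0$ to be chosen, let $v^i$ minimize $\int_\Omega |\nabla v^i|^p\,dx$ over $W^{1,p}(\Omega)$ subject to $v^i = \varphi^i$ on $\partial\Omega$ and $v^i \leq 0$ on $\{u^i = 0\} \cup \overline{B}_{\tau r}(x_0)$. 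By the same arguments as in Theorem~\ref{thm: Du>c}, $\mathbf{v} \in V$, $0 \leq v^i \leq u^i$, and $v^i \equiv 0$ on $\overline{B}_{\tau r}(x_0)$; hence $\{|\mathbf{v}|>0\}\subset U - \overline{B}_{\tau r}(x_0)$, and the porosity estimate \eqref{thm:porosity-rel1} gives
\[
|U| - |V| \geq |U \cap \overline{B}_{\tau r}(x_0)| \geq c(\tau r)^n.
\]

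Next, the same boundary-to-gradient chain as in Theorem~\ref{thm: Du>c} --- property (2) of $\Gamma$ combined with \eqref{eq: by parts 2} --- together with comparison of $v^i$ against the capacitary-type competitor $h^i = \min\{u^i,\, s_i\, g(|\cdot - x_0|)^+/g(\sqrt\tau r)\}$ used there (whose energy bounds $\int|\nabla v^i|^p$ via obstacle minimality) and the associated integration-by-parts computation, yields
\[
\int_{\partial\Omega} \bigl(\Gamma(x, A_\nu \mathbf{v}) - \Gamma(x, A_\nu \mathbf{u})\bigr)\,d\sigma \leq C \tau^{\alpha} r^n
\]
for some exponent $\alpha > 0$ and a constant $C$ independent of $\tau, r, \varepsilon$.

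Now choose $r$ small enough that $|V| \geq 1$; since $|U| - |V| \leq |B_r(x_0)| = C_0 r^n$, it suffices to take $r^n \leq (|U|-1)/(2C_0)$. This is possible because the energy bound of Lemma~\ref{lem: q<C} gives $|U|-1 \leq C\varepsilon$, so $r$ of order $\varepsilon^{1/n}$ works. In this regime $f_\varepsilon$ is in its steep branch, and therefore $f_\varepsilon(|U|) - f_\varepsilon(|V|) = (|U|-|V|)/\varepsilon \geq c(\tau r)^n/\varepsilon$. The minimality inequality $J_\varepsilon(\mathbf{u}) \leq J_\varepsilon(\mathbf{v})$ combined with the above estimates then yields $c(\tau r)^n/\varepsilon \leq C\tau^\alpha r^n$, i.e., $\tau^{n-\alpha} \leq C\varepsilon$. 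Fixing $\tau$ to be a small absolute constant, this inequality is violated for all sufficiently small $\varepsilon$, producing the desired contradiction.

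The main obstacle is deriving the cost estimate with an explicit positive exponent $\alpha$: it requires revisiting the integration-by-parts computation of Theorem~\ref{thm: Du>c}, using the improved bound $s_i = \max_{\overline{B}_{\sqrt\tau r}(x_0)} u^i \lesssim \sqrt\tau r$ (available since $u^i(x_0) = 0$ and $u^i$ is Lipschitz), and carefully tracking the $\tau$-dependence of the normal derivative of the $p$-capacitary potential on $\partial B_{\tau r}(x_0)$. A secondary subtlety is guaranteeing $|V| \geq 1$ so that the slope $1/\varepsilon$ of $f_\varepsilon$ applies; this is where the crude bound $|U| - 1 \leq C\varepsilon$ from the proof of Lemma~\ref{lem: q<C} enters to justify the choice of $r$.
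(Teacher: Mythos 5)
Your approach is genuinely different from the paper's. The paper computes a first variation along a smooth Hadamard vector field $T_r(x)=x+r\lambda\rho(|x-x_0|/r)\nu(x_0)$ near a regular free boundary point, extracts in the limit $r\to0$, $\lambda\to 0$ the sharp relation $\frac{1}{\varepsilon}\le(p-1)\sum_i|q^i(x_0)|^{p/(p-1)}$, and then invokes Lemma~\ref{lem: q<C} to bound $\sum_iq^i(x_0)$ uniformly. Your scheme instead recycles the ``ball-scooping'' competitor from the nondegeneracy Theorem~\ref{thm: Du>c}: compare scooping cost $O(\tau^\alpha r^n)$ with $\alpha<n$ against the volume gain $\approx c\tau^nr^n$ paid at the steep rate $1/\varepsilon$. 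This is a legitimate alternative and, if pushed through carefully, would avoid the sharper first-variation computation and even the use of Lemma~\ref{lem: q<C}. However, as written there are three gaps.

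\emph{Porosity constant.} You invoke \eqref{thm:porosity-rel1} to write $|U\cap \overline{B}_{\tau r}(x_0)|\ge c(\tau r)^n$, but that corollary gives $c=c_\varepsilon$, depending on $\varepsilon$. Your final inequality $\tau^{n-\alpha}\le C\varepsilon/c$ would then read $\tau^{n-\alpha}\le C\varepsilon/c_\varepsilon$, which is no contradiction if $c_\varepsilon$ decays along with $\varepsilon$. The fix is to use the fact that $x_0$ is a \emph{regular} point: by (4) of Theorem~\ref{Thm:Hausdorff-dim-FB} the blow-up at $x_0$ is a half-space, so $|U\cap B_s(x_0)|/|B_s|\to 1/2$ as $s\to0$, giving $|U\cap B_{\tau r}(x_0)|\ge\tfrac14\omega_n(\tau r)^n$ for $r$ small with a universal constant.

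\emph{The bound $|U|-|V|\le C_0r^n$ is not justified and is in general false.} The obstacle solution $\mathbf{v}$ is $p$-harmonic and nonnegative on $U-\overline{B}_{\tau r}(x_0)$, hence on each connected component of that set it is either strictly positive or identically zero; any component disconnected from $\partial\Omega$ by removing $\overline{B}_{\tau r}(x_0)$ will have $v^i\equiv 0$, so $U\setminus\{|\mathbf{v}|>0\}$ need not be confined to a ball of radius $r$. What you actually need is just $|V|\ge 1$ for $r$ small enough, and this follows from a monotonicity argument: as $r\downarrow 0$ the constraint set $\{u^i=0\}\cup\overline{B}_{\tau r}(x_0)$ shrinks, so $v^i_r$ increases pointwise to $u^i$, hence $\{|\mathbf{v}_r|>0\}\uparrow U$ and $|V_r|\uparrow |U|>1$. (With this in hand, the appeal to Lemma~\ref{lem: q<C} to size $r\sim\varepsilon^{1/n}$ is unnecessary; any $r$ small enough works, and $r^n$ cancels in the end.)

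\emph{The cost estimate $\alpha<n$ is the heart of the argument and is only asserted.} It does work out: at $\partial B_{\tau r}(x_0)$ the capacitary competitor satisfies $|\nabla h^i|\sim c\,s_i/(\tau r)$ for $p\le n$ (with an extra $1/\log(1/\tau)$ factor when $p=n$), so $|\nabla h^i|^{p-1}\sim C\tau^{1-p}s_i^{p-1}/r^{p-1}$; combining the improved bounds $s_i\le\mathrm{Lip}(\mathbf{u})\sqrt\tau\,r$ and $\int_{\partial B_{\tau r}}u^i\,d\sigma\le C\tau^nr^n$ yields $\int_\Omega|\nabla v^i|^p-|\nabla u^i|^p\le C\tau^{\,n-(p-1)/2}r^n$, so $\alpha=n-(p-1)/2<n$. (For $p>n$ a similar computation gives $\alpha=(n+1)/2<n$.) This should be written out explicitly; it is where the $\varepsilon$-independence of the threshold $\tau$ actually comes from, and the $\tau$-dependence of the capacitary constant is precisely why the ``hole'' must be chosen in the thin annular regime $\tau r\ll\sqrt\tau r$ used in Theorem~\ref{thm: Du>c}.

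In short, your route can be made rigorous and is conceptually simpler than the paper's Hadamard perturbation (which has the extra benefit of identifying the free boundary condition up to constants), but the porosity constant, the $|V|\ge 1$ step, and the $\tau$-exponent all need the above repairs.
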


\begin{proof}
By the above lemma we only need to show that $\big|\{|\mathbf{u}_{\varepsilon}|>0\}\big|\le1$.
To this end, we will compare $\mathbf{u}_{\varepsilon}$ with a suitable
perturbation of itself. Let $x_{0}\in\mathcal{R}$, and let $\rho:\mathbb{R}\to\mathbb{R}$
be a nonnegative smooth function supported in $(0,1)$. For small
enough $r,\lambda>0$ we consider the vector field 
\[
T_{r}(x):=\begin{cases}
x+r\lambda\rho(|x-x_{0}|/r)\nu(x_{0}) & \textrm{if }x\in B_{r}(x_{0}),\\
x & \text{elsewhere}.
\end{cases}
\]
Here, $\nu(x_{0})$ is the outward normal vector provided in (4) of
Theorem \ref{Thm:Hausdorff-dim-FB}. We can easily see that for $x$
in $B_{r}(x_{0})$ we have 
\begin{equation}
DT_{r}(x)\,\cdot=I\cdot+\,\lambda\rho'(|x-x_{0}|/r)\frac{\langle x-x_{0},\cdot\,\rangle}{|x-x_{0}|}\nu(x_{0}),\label{eq:DT_r-2}
\end{equation}
where $I$ is the identity matrix. Hence, if $\lambda$ is small enough,
$T_{r}$ is a diffeomorphism that maps $B_{r}(x_{0})$ onto itself.

Now consider 
\[
\mathbf{v}_{r}(x):=\mathbf{u}(T_{r}^{-1}(x))
\]
for $r>0$ small enough. Similarly to the proof of Theorem \ref{thm: Du>c},
we consider the vector-valued function $\mathbf{w}$ whose components
minimize the Dirichlet $p$-energy subject to the condition 
\[
w^{i}\le0\;\textrm{ on }\;\{\mathbf{u}=0\}\cup\big(\overline{B}_{r}(x_{0})\cap\{\mathbf{v}_{r}=0\}\big).
\]
With a calculation similar to (\ref{boundary-to-gradient-v<u}) and
(\ref{eq: Gamma > sum}) we get 
\begin{align}
0\le J_{\varepsilon}(\mathbf{w})-J_{\varepsilon}(\mathbf{u}) & \le C\sum_{i=1}^{m}\int_{\Omega}|\nabla w^{i}|^{p}-|\nabla u^{i}|^{p}\,dx\nonumber \\
 & \qquad\qquad+f_{\varepsilon}\big(\big|\{|\mathbf{w}|>0\}\big|\big)-f_{\varepsilon}\big(\big|\{|\mathbf{u}|>0\}\big|\big)\nonumber \\
 & \le C\sum_{i=1}^{m}\int_{B_{r}(x_{0})}|\nabla v_{r}^{i}|^{p}-|\nabla u^{i}|^{p}\,dx\label{inq:compare u-w2-1}\\
 & \qquad\qquad+f_{\varepsilon}\big(\big|\{|\mathbf{w}|>0\}\big|\big)-f_{\varepsilon}\big(\big|\{|\mathbf{u}|>0\}\big|\big),\nonumber 
\end{align}
where in the last inequality we have compared the Dirichlet $p$-energy
of $\mathbf{w}$ with that of $\mathbf{v}_{r}\chi_{B_{r}(x_{0})}+\mathbf{u}\chi_{\Omega-B_{r}(x_{0})}$. 

Now notice that 
\begin{align*}
\int_{B_{r}(x_{0})}|\nabla v_{r}^{i}|^{p}\,dx & =\int_{B_{r}(x_{0})}\big|DT_{r}(T_{r}^{-1}(x))^{-1}\nabla u^{i}(T_{r}^{-1}(x))\big|^{p}\,dx\\
 & =\int_{B_{r}(x_{0})}\big|DT_{r}(y)^{-1}\nabla u^{i}(y)\big|^{p}\,|\det DT_{r}(y)|\,dy\\
 & =r^{n}\int_{B_{1}}\big|DT_{r}(y)^{-1}\nabla u^{i}(y)\big|^{p}\,|\det DT_{r}(y)|\,dz.\tag{\ensuremath{z=\frac{y-x_{0}}{r}}}
\end{align*}
From (\ref{eq:DT_r-2}), for small enough $\lambda$ we can write
\begin{align}
DT_{r}(y)^{-1} & =I+\Big(\sum_{k=1}^{\infty}(-1)^{k}\lambda^{k}\rho'(|z|)^{k}\frac{\langle z,\nu\rangle^{k-1}}{|z|^{k-1}}\Big)\frac{\langle z,\cdot\,\rangle}{|z|}\nu(x_{0})\nonumber \\
 & =I-\lambda\rho'(|z|)\frac{\langle z,\cdot\,\rangle}{|z|}\nu(x_{0})+\lambda^{2}g(\lambda,z)\frac{\langle z,\cdot\,\rangle}{|z|}\nu(x_{0})\label{eq: DT^-1-1}
\end{align}
for some $g$. Hence we have 
\[
DT_{r}(y)^{-1}\nabla u^{i}(y)=\nabla u^{i}(y)-\lambda\rho'(|z|)\frac{\langle z,\nabla u^{i}(y)\rangle}{|z|}\nu(x_{0})+O(\lambda^{2}).
\]
Thus 
\[
\big|DT_{r}(y)^{-1}\nabla u^{i}(y)\big|^{2}=|\nabla u^{i}(y)|^{2}-2\lambda\rho'(|z|)\frac{\langle z,\nabla u^{i}(y)\rangle}{|z|}\langle\nu(x_{0}),\nabla u^{i}(y)\rangle+O(\lambda^{2}),
\]
and therefore 
\begin{align*}
\big|DT_{r}(y)^{-1}\nabla u^{i}(y)\big|^{p} & =|\nabla u^{i}(y)|^{p}\Big(1-p\lambda\rho'(|z|)\frac{\langle z,\nabla u^{i}(y)\rangle}{|z||\nabla u^{i}(y)|^{2}}\langle\nu(x_{0}),\nabla u^{i}(y)\rangle\Big)\\
 & \qquad+O(\lambda^{2}).
\end{align*}
Also, we have (noting that $DT_{r}$ is the identity matrix plus a
rank $1$ matrix) 
\[
|\det DT_{r}(y)|=1+\lambda\rho'(|z|)\frac{\langle z,\nu(x_{0})\rangle}{|z|}.
\]
All these together, we obtain (remember that $y=x_{0}+rz$) 
\begin{align*}
 & r^{-n}\int_{B_{r}(x_{0})}|\nabla v_{r}^{i}|^{p}-|\nabla u^{i}|^{p}\,dx\\
 & \qquad=\lambda\int_{B_{1}}|\nabla u^{i}(y)|^{p}\rho'(|z|)\left(\frac{\langle z,\nu(x_{0})\rangle}{|z|}-p\frac{\langle z,\nabla u^{i}(y)\rangle\langle\nabla u^{i}(y),\nu(x_{0})\rangle}{|z||\nabla u^{i}(y)|^{2}}\right)dz\\
 & \qquad\quad+O(\lambda^{2}).
\end{align*}
Now consider the blowup sequence $\mathbf{u}_{r}(z):=\mathbf{u}(x_{0}+rz)/r$.
We know that as $r\to0$ (see \cite{alt1984free})
\[
\begin{split} & \{u_{r}^{i}>0\}\cap B_{1}\to\{z:z\cdot\nu(x_{0})>0\}\cap B_{1},\\
 & \nabla u^{i}(y)=\nabla u_{r}^{i}(z)\to(q^{i}(x_{0}))^{\frac{1}{p-1}}\nu(x_{0})\chi_{\{z\cdot\nu(x_{0})>0\}},\qquad\text{a.e. in }B_{1}.
\end{split}
\]
Therefore we get 
\begin{align*}
 & r^{-n}\int_{B_{r}(x_{0})}|\nabla v_{r}^{i}|^{p}-|\nabla u^{i}|^{p}\,dx\\
 & \qquad\underset{r\to0}{\longrightarrow}-(p-1)\lambda|q^{i}(x_{0})|^{\frac{p}{p-1}}\int_{B_{1}\cap\{z\cdot\nu(x_{0})>0\}}\rho'(|z|)\frac{\langle z,\nu(x_{0})\rangle}{|z|}\,dz+O(\lambda^{2}).
\end{align*}
Note that the formula (\ref{eq: DT^-1-1}) for $(DT_{r})^{-1}$ does
not depend on $r$, and the function $\cdot\mapsto|\cdot|^{p}$ is
continuous; so the $O(\lambda^{2})$ term converges to an $O(\lambda^{2})$
term as $r\to0$. Next note that 
\[
\mathrm{div}(\rho(|z|)\nu)=\frac{\rho'(|z|)}{|z|}\langle z,\nu\rangle.
\]
Thus (noting that $\rho(|z|)$ is zero near $\partial B_{1}$) 
\begin{align*}
\int_{B_{1}\cap\{z\cdot\nu(x_{0})>0\}}\rho'(|z|)\frac{\langle z,\nu(x_{0})\rangle}{|z|}\,dz & =-\int_{B_{1}\cap\{z\cdot\nu(x_{0})=0\}}\rho(|z|)\,dz\\
 & =-\omega_{n-1}\int_{0}^{1}\rho(t)t^{n-1}\,dt=-C_{\rho}\omega_{n-1},
\end{align*}
where $\omega_{n-1}$ is the volume of the $(n-1)$-dimensional ball
of radius $1$, and $C_{\rho}$ depends only on $\rho$. Hence we
can write 
\begin{align*}
 & \int_{B_{r}(x_{0})}|\nabla v_{r}^{i}|^{p}-|\nabla u^{i}|^{p}\,dx\\
 & \qquad=\big[(p-1)\lambda C_{\rho}\omega_{n-1}|q^{i}(x_{0})|^{\frac{p}{p-1}}+O(\lambda^{2})\big]r^{n}+o(r^{n}).
\end{align*}

On the other hand, 
\[
\begin{split}
& \lim_{r\to 0} r^{-n}\big|B_{r}(x_{0})\cap\{|\mathbf{v}_{r}|>0\}\big|  = \lim_{r\to 0} r^{-n}\int_{\{|\mathbf{v}_{r}|>0\}\cap B_{r}(x_{0})}dx \\
 & \qquad= \lim_{r\to 0} r^{-n}\int_{\{|\mathbf{u}|>0\}\cap B_{r}(x_{0})}|\det DT_{r}(y)|\,dy \\
 & \qquad= \int_{B_{1}\cap\{z\cdot\nu(x_{0})>0\}}1+\lambda\rho'(|z|)\frac{\langle z,\nu(x_{0})\rangle}{|z|}\,dz \\
 & \qquad= \frac{1}{2}\omega_{n}-\lambda\omega_{n-1}\int_{0}^{1}\rho(t)t^{n-1}\,dt = \frac{1}{2}\omega_{n}-\lambda C_{\rho}\omega_{n-1}.
\end{split}
\]
Thus for $A_{0}:=\big(\{|\mathbf{u}|>0\}-B_{r}(x_{0})\big)\cup\big(\{|\mathbf{v}_{r}|>0\}\cap B_{r}(x_{0})\big)$
we have 
\begin{align*}
|A_{0}|-\big|\{|\mathbf{u}|>0\}\big| & =\big|B_{r}(x_{0})\cap\{|\mathbf{v}_{r}|>0\}\big|-\big|B_{r}(x_{0})\cap\{|\mathbf{u}|>0\}\big|\\
 & =-\lambda C_{\rho}\omega_{n-1}r^{n}+o(r^{n}).
\end{align*}
In addition, it is easy to see that $\{|\mathbf{w}|>0\}\subset A_{0}$.

Now suppose to the contrary that $\big|\{|\mathbf{u}|>0\}\big|>1$.
Then we can choose $r$ small enough so that 
\[
|A_{0}|=\big|\{|\mathbf{u}|>0\}\big|-\lambda C_{\rho}\omega_{n-1}r^{n}+o(r^{n})>1.
\]
Therefore, using the monotonicity of $f_{\varepsilon}$ we get 
\[
\begin{split}f_{\varepsilon}\big(\big|\{|\mathbf{w}|>0\}\big|\big) & -f_{\varepsilon}\big(\big|\{|\mathbf{u}|>0\}\big|\big) \le f_{\varepsilon}(|A_{0}|)-f_{\varepsilon}\big(\big|\{|\mathbf{u}|>0\}\big|\big)\\
 & =\frac{1}{\varepsilon}\big(|A_{0}|-\big|\{|\mathbf{u}|>0\}\big|\big)=-\frac{1}{\varepsilon}\lambda C_{\rho}\omega_{n-1}r^{n}+o(r^{n}).
\end{split}
\]
Finally, by putting all these estimates in (\ref{inq:compare u-w2-1}),
we obtain 
\begin{align*}
0 & \le C\sum_{i=1}^{m}\int_{B_{r}(x_{0})}|\nabla v_{r}^{i}|^{p}-|\nabla u^{i}|^{p}\,dx+f_{\varepsilon}(|A_{0}|)-f_{\varepsilon}\big(\big|\{|\mathbf{u}|>0\}\big|\big)\\
 & =\big[(p-1)\lambda C_{\rho}\omega_{n-1}\sum_{i=1}^{m}|q^{i}(x_{0})|^{\frac{p}{p-1}}+O(\lambda^{2})\big]r^{n}-\frac{1}{\varepsilon}\lambda C_{\rho}\omega_{n-1}r^{n}+o(r^{n}).
\end{align*}
Dividing by $r^{n}$ and letting $r\to0$, and then dividing by $\lambda$
and letting $\lambda\to0$, we get 
\[
\frac{1}{\varepsilon}\le(p-1)\sum_{i=1}^{m}|q^{i}(x_{0})|^{\frac{p}{p-1}}.
\]
Now if we choose $x_{0}$ such that 
\[
\sum_{i\le m}q^{i}(x_{0})\le\inf_{\mathcal{R}_{\varepsilon}}\Big(\sum_{i\le m}q^{i}\Big)+1,
\]
then by Lemma \ref{lem: q<C} (and the equivalence of all norms on
the finite-dimensional space $\mathbb{R}^{m}$) we have $\sum_{i\le m}|q^{i}(x_{0})|^{\frac{p}{p-1}}\le C$,
independently of $\varepsilon$. However, this implies that $\varepsilon$
has a positive lower bound, which is a contradiction.
\end{proof}

\section{Regularity of the free boundary (case $p=2$)}

We are going to show that $\mathcal{R}$ is an analytic hypersurface when $p=2$. To see this, we first derive the free boundary condition, also known as the optimality condition, in the following lemma. We perturb the optimal set $\Omega$ and compute the first variation of the energy functional $J_{\varepsilon}$. To perform this computation, it is
crucial to ensure that the $p$-harmonic solution within the perturbed domain is differentiable with respect to the perturbation parameter. When $p=2$, this can be established through the implicit function theorem . However, it is noteworthy that for $p\ne2$ the proof does not hold, primarily due to the ill-posedness of the derivative of the map $u\mapsto\Delta_{p}u$.



\begin{lem}\label{lem:FBC}
Let $\mathbf{u}$ be a solution of the minimization problem
(\ref{eq: main_eq}) for $p=2$. Let $h^{i}$ be the solution of 
\[
\begin{cases}
\Delta h^{i}=0 & \text{in }\Omega-E,\\
h^{i}=0 & \text{on }E,\\
h^{i}=\partial_{\xi_{i}}\Gamma(x,\partial_{\nu}\mathbf{u}) & \text{on }\partial\Omega.
\end{cases}
\]
Then, on the regular part of the free boundary we have 
\begin{equation}\label{FB-condition}
 \sum_{i=1}^{m}\partial_{\nu}h^{i}\partial_{\nu}u^{i} = C   
\end{equation}
for some positive constant $C$.
\end{lem}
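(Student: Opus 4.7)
My strategy is to perform a compactly supported shape variation of the free boundary $\partial E$ near a regular point $x_0\in\mathcal{R}$ and convert the resulting first-order optimality condition into the claimed identity via Green's identity. Throughout, $\mathbf{u}=\mathbf{u}_{\varepsilon}$ with $\varepsilon$ small enough that Theorem \ref{thm: vol is 1} gives $|U|=1$, so $\mathbf{u}$ solves (\ref{eq: main_eq}).

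Pick $\eta\in C_{c}^{\infty}(\Omega;\mathbb{R}^{n})$ supported in a small ball about $x_0$ (in particular, $\eta=0$ near $\partial\Omega$), set $\Phi_{t}(x):=x+t\eta(x)$, $\tilde{U}_{t}:=\Phi_{t}(U)$, $\tilde{E}_{t}:=\Phi_{t}(E)$, and let $\tilde{\mathbf{u}}_{t}$ have components harmonic in $\tilde{U}_{t}$ with boundary data $\boldsymbol{\varphi}$ on $\partial\Omega$ and $0$ on $\partial\tilde{E}_{t}$. The key analytic step is to differentiate in $t$: on the fixed domain $U$, the pull-back $w_{t}^{i}(x):=\tilde{u}_{t}^{i}(\Phi_{t}(x))$ solves a uniformly elliptic linear equation whose coefficients depend smoothly on $t$ with $w_{0}^{i}=u^{i}$, so the implicit function theorem gives differentiability in $H^{1}(U)$. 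This is exactly the point where $p=2$ is essential, as the paragraph preceding the lemma indicates: the linearisation of $\Delta$ is $\Delta$ itself, an isomorphism on $H_{0}^{1}(U)$, whereas for $p\neq 2$ the linearisation of $\Delta_{p}$ involves the possibly degenerate weight $|\nabla u|^{p-2}$. Unpacking yields the shape derivative $u'^{i}:=\partial_{t}\tilde{u}_{t}^{i}\big|_{t=0}$ satisfying
\[
\Delta u'^{i}=0 \text{ in } U,\quad u'^{i}=0 \text{ on } \partial\Omega,\quad u'^{i}=-(\eta\cdot\nu)\partial_{\nu}u^{i} \text{ on } \partial E,
\]
where $\nu$ is the outward unit normal of $E$; the boundary value on $\partial E$ comes from differentiating $w_{t}^{i}|_{\partial E}=0$ and using that $\nabla u^{i}=(\partial_{\nu}u^{i})\nu$ on $\partial E$.

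I then choose $\eta$ with $\int_{\partial E}\eta\cdot\nu\,d\sigma=0$, so $|\tilde{U}_{t}|=1+O(t^{2})$ and the penalty $f_{\varepsilon}(|\tilde{U}_{t}|)$ makes no contribution to the first variation of $J_{\varepsilon}$ at $t=0$; replacing $\eta$ by $-\eta$, the stationarity of $J_{\varepsilon}$ gives (using $A_{\nu}=\partial_{\nu}$ since $p=2$)
\[
0=\sum_{i=1}^{m}\int_{\partial\Omega}\partial_{\xi_{i}}\Gamma(x,\partial_{\nu}\mathbf{u})\,\partial_{\nu}u'^{i}\,d\sigma.
\]
To push this integral to the free boundary, I apply Green's identity to the harmonic functions $h^{i}$ and $u'^{i}$ on $U$. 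With $\nu_{\mathrm{out}}$ the outward normal of $U$ (equal to the outward normal of $\Omega$ on $\partial\Omega$ and equal to $-\nu$ on $\partial E$), and using $u'^{i}=0$, $h^{i}=\partial_{\xi_{i}}\Gamma(x,\partial_{\nu}\mathbf{u})$ on $\partial\Omega$ and $h^{i}=0$, $u'^{i}=-(\eta\cdot\nu)\partial_{\nu}u^{i}$ on $\partial E$, the identity reduces to
\[
\int_{\partial\Omega}\partial_{\xi_{i}}\Gamma(x,\partial_{\nu}\mathbf{u})\,\partial_{\nu}u'^{i}\,d\sigma=\int_{\partial E}(\eta\cdot\nu)(\partial_{\nu}u^{i})(\partial_{\nu}h^{i})\,d\sigma.
\]
Summing in $i$ and inserting into the stationarity relation yields
\[
\int_{\partial E}(\eta\cdot\nu)\sum_{i=1}^{m}(\partial_{\nu}u^{i})(\partial_{\nu}h^{i})\,d\sigma=0
\]
for every such $\eta$, and the fundamental lemma of the calculus of variations forces $\sum_{i}\partial_{\nu}u^{i}\,\partial_{\nu}h^{i}\equiv C$ on $\mathcal{R}$. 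Positivity of $C$ is immediate: $\partial_{\nu}u^{i}>0$ on $\mathcal{R}$ by Theorem \ref{Thm:Hausdorff-dim-FB}(4), while $h^{i}$ is harmonic with $h^{i}=0$ on $\partial E$ and strictly positive boundary data $\partial_{\xi_{i}}\Gamma>0$ on $\partial\Omega$, so the strong maximum principle and Hopf's lemma give $\partial_{\nu}h^{i}>0$ on $\partial E$.

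The main obstacle is step 2, the rigorous existence and identification of the shape derivative $u'^{i}$ with the stated boundary trace. The free boundary is only known a priori to have good measure-theoretic structure (the $C^{1,\alpha}$ regularity we are ultimately heading towards is in fact a consequence of the present lemma), so the implicit function theorem argument and the trace identification on $\partial E$ must be carried out with care in a neighborhood of the regular set $\mathcal{R}$. A secondary subtlety is that $f_{\varepsilon}$ fails to be differentiable at $|U|=1$, which forces the restriction to volume-preserving variations above.
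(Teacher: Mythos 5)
Your outline shares the paper's starting point (a domain variation, a pullback to the fixed domain, an implicit-function-theorem argument that requires $p=2$, and then integration by parts against $h^i$), but the crucial analytic step is not justified, and in fact the paper is structured precisely to avoid it.

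The gap you flag yourself is genuine and not a minor technicality. You identify the shape derivative by the boundary condition $u'^i=-(\eta\cdot\nu)\partial_{\nu}u^i$ on $\partial E$, and then you integrate Green's identity over $U$ treating $\partial E$ as a hypersurface on which both $u'^i$ and $\partial_{\nu}h^i$ have classical traces. At this stage of the argument $\partial E$ is only known to be a set of locally finite perimeter with measure-theoretic normals defined $\mathcal{H}^{n-1}$-a.e.\ on $\mathcal{R}$; the $C^{1,\alpha}$ regularity is the \emph{output} of this lemma, not an available hypothesis. Neither the trace of $u'^i$ on $\partial E$, nor the Green identity with boundary term over $\partial E$, nor the final ``fundamental lemma of the calculus of variations'' step (which requires $(\eta\cdot\nu)$ to range over a rich enough class of functions on $\partial E$ and the integrand to be, say, continuous there) is justified. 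Restricting to $\int_{\partial E}\eta\cdot\nu\,d\sigma=0$ to tame $f_{\varepsilon}$ is a further unverified trace manipulation.

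The paper circumvents all of this. It uses a two-point deformation supported in $B_r(x_1)\cup B_r(x_2)$ around two regular points, with the inward push at $x_1$ matching the outward push at $x_2$ so that the penalty contributes only $\tfrac{\lambda}{\varepsilon}o(r^n)$ (no volume-preserving family is needed). After the implicit function theorem, the first variation is expressed as the \emph{interior} integral $-\sum_i\int_{(B_r(x_1)\cup B_r(x_2))-E}h^i\,\partial_{\lambda}F|_{\lambda=0}\,dx$; integration by parts is done inside $U$ with all boundary terms vanishing because $h^i=0$ on $\partial E$ and $\rho$ vanishes near $\partial B_r$. One then divides by $r^n$ and sends $r\to0$, using the blowup convergence $\nabla u_r^i\to q^i(x_a)\nu(x_a)$ (from \cite{alt1984free}) and the analogous convergence for $h^i$ (after first showing $h^i$ is Lipschitz via $0\le h^i\le c u^i$). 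This yields the identity $\sum_i\partial_{\nu}h^i(x_2)\partial_{\nu}u^i(x_2)=\sum_i\partial_{\nu}h^i(x_1)\partial_{\nu}u^i(x_1)$ for \emph{any} two regular points, entirely bypassing the need for a trace theorem on $\partial E$, Green's identity across $\partial E$, or a fundamental-lemma argument. To repair your proof you would essentially need to localize to two regular points and replace the surface integral over $\partial E$ by blowup limits, at which point you recover the paper's argument. The positivity of $C$ is argued correctly (Hopf plus strong maximum principle for $h^i$, $q^i>0$ for $u^i$).
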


\begin{proof}
Let $x_{1}$ and $x_{2}$ be two regular points in $\mathcal{R}$
with corresponding unit normal vectors $\nu(x_{1})$ and $\nu(x_{2})$.
Also, let $\rho:\mathbb{R}\to\mathbb{R}$ be a nonnegative smooth
function supported in $(0,1)$. Similarly to the proof of Theorem
\ref{thm: vol is 1} we define the vector field 
\[
T_{r,\lambda}(x):=\begin{cases}
x-r\lambda\rho(|x-x_{1}|/r)\nu(x_{1}) & \textrm{if }x\in B_{r}(x_{1}),\\
x+r\lambda\rho(|x-x_{2}|/r)\nu(x_{2}) & \textrm{if }x\in B_{r}(x_{2}),\\
x & \text{elsewhere},
\end{cases}
\]
for small enough $r,\lambda>0$ (which makes $T_{r,\lambda}$ a diffeomorphism from $B_r(x_a)$ onto itself for $a=1,2$).

Now for some fixed $r>0$ let $E_{\lambda}=T_{r,\lambda}^{-1}(E)$, and assume that $\mathbf{w}_{\lambda}$
solves 
\[
\begin{cases}
\Delta w_{\lambda}^{i}=0 & \text{in }\Omega-E_{\lambda},\\
w_{\lambda}^{i}=\varphi^{i} & \text{on }\partial\Omega,\\
w_{\lambda}^{i}=0 & \text{on }\partial E_{\lambda}.
\end{cases}
\]
Define $\mathbf{v}_{\lambda}(y):=\mathbf{w}_{\lambda}(T_{r,\lambda}^{-1}(y))$.
We are going to show that $\lambda\mapsto\mathbf{v}_{\lambda}$ is
a $C^{1}$ map from a neighborhood of $\lambda=0$ into $W^{1,2}(\Omega-E)$.
We know that each $v_{\lambda}^{i}$ satisfies an elliptic PDE of
the form 
\[
F[v,\lambda]=F(D_{y}^{2}v,\nabla_{y}v,y,\lambda)=0,\qquad\text{ in }\;U=\Omega-E.
\]
We also know that $F=\Delta$ when $y\notin B_{r}(x_{1})\cup B_{r}(x_{2})$
or when $\lambda=0$. In addition, we can consider $F$ as a $C^{1}$
map 
\begin{align*}
F:W^{1,2}(U)\times\mathbb{R} & \to W^{-1,2}(U),\\
(v,\lambda) & \mapsto F[v,\lambda]
\end{align*}
where $U=\Omega-E$. 

Now we employ the implicit function theorem to show that $\lambda\mapsto\mathbf{v}_{\lambda}$
is $C^{1}$. This can be readily deduced from the fact that 
\[
\partial_{v}F|_{\lambda=0}:W_{0}^{1,2}(U)\to W^{-1,2}(U)
\]
is invertible, since we have 
\[
\partial_{v}F|_{\lambda=0}\;\cdot=\frac{d}{ds}\Big|_{s=0}F[v+s\,\cdot\,,0]=\frac{d}{ds}\Big|_{s=0}\Delta(v+s\,\cdot\,)=\Delta\cdot.
\]
Therefore, $\mathbf{v}_{\lambda}=\mathbf{u}+\lambda\mathbf{u}_{0}+o(\lambda)$
in $W^{1,2}(U)$, where $\mathbf{u}_{0}\in W_{0}^{1,2}(U)$ solves
\[
0=\frac{d}{d\lambda}\Big|_{\lambda=0}F[v_{\lambda}^{i},\lambda]=\partial_{v}Fu_{0}^{i}+\partial_{\lambda}F.
\]
In other words 
\[
\Delta u_{0}^{i}=-\partial_{\lambda}F|_{v=u^{i},\,\lambda=0}.
\]
Note that we also have $\nabla\mathbf{v}_{\lambda}=\nabla\mathbf{u}+\lambda\nabla\mathbf{u}_{0}+o(\lambda)$,
since $\lambda\mapsto\mathbf{v}_{\lambda}$ is a $C^{1}$ map into
$W^{1,2}(U)$; so $\lambda\mapsto\nabla\mathbf{v}_{\lambda}$ is a
$C^{1}$ map into $L^{2}(U)$. 

Now let $h^{i}$ be the solution of $\Delta h^{i}=0$ in $U=\Omega-E$
with boundary data $h^{i}=\partial_{\xi_{i}}\Gamma(x,\partial_{\nu}\mathbf{u})$
on $\partial\Omega$ and $h^{i}=0$ on $\partial E$. Then for small
$\lambda>0$ we have (note that for $p=2$ we have $A_{\nu}=\partial_{\nu}$)
\begin{align*}
 & \int_{\partial\Omega}\Gamma(x,\partial_{\nu}\mathbf{v}_{\lambda})-\Gamma(x,\partial_{\nu}\mathbf{u})\,d\sigma\\
 & \hspace{1cm}=\int_{\partial\Omega}\sum_{i}\partial_{i}\Gamma(x,\partial_{\nu}\mathbf{u})(\partial_{\nu}v_{\lambda}^{i}-\partial_{\nu}u^{i})\,d\sigma+o(\lambda)\\
 & \hspace{1cm}=\lambda\int_{\partial\Omega}\sum_{i}\partial_{i}\Gamma(x,\partial_{\nu}\mathbf{u})\partial_{\nu}u_{0}^{i}\,d\sigma+o(\lambda)\\
 & \hspace{1cm}=\lambda\int_{\partial\Omega}\sum_{i}h^{i}\partial_{\nu}u_{0}^{i}\,d\sigma+o(\lambda)\\
 & \hspace{1cm}=\lambda\sum_{i}\int_{U}\nabla h^{i}\cdot\nabla u_{0}^{i}+h^{i}\Delta u_{0}^{i}\,dx+o(\lambda)\\
 & \hspace{1cm}=-\lambda\sum_{i}\int_{{\textstyle (}B_{r}(x_{1})\cup B_{r}(x_{2}){\textstyle )}-E}h^{i}\partial_{\lambda}F|_{v=u^{i},\,\lambda=0}\,dx+o(\lambda).
\end{align*}
Note that in the last line we have used the facts that $\Delta h^{i}=0$
in $U$ and $u_{0}^{i}=0$ on $\partial U=\partial\Omega\cup\partial E$.
Also, $\partial_{\lambda}F|_{v=u^{i},\,\lambda=0}=0$ outside $B_{r}(x_{1})\cup B_{r}(x_{2})$,
because in that region $F=\Delta$ for all $\lambda$.

Now let us extend $\mathbf{w}_{\lambda}$ to all of $\Omega$ by setting
it equal to $0$ on $E_{\lambda}$. Note that $w_{\lambda}^{i}$ is
positive on $\Omega-E_{\lambda}$ by the maximum principle. Hence
\[
\{|\mathbf{w}_{\lambda}|>0\}=\Omega-E_{\lambda}.
\]
Furthermore, similarly to the proof of Theorem \ref{thm: vol is 1},
we obtain 
\begin{align*}
f_{\varepsilon}\big(\big|\{|\mathbf{w}_{\lambda}|>0\}\big|\big) & -f_{\varepsilon}\big(\big|\{|\mathbf{u}|>0\}\big|\big)\le\frac{1}{\varepsilon}(|E|-|E_{\lambda}|)\\
 & =\frac{\lambda}{\varepsilon}\Big(\int_{B_{r}(x_{2})\cap\{|\mathbf{u}|>0\}}\rho'(|x-x_{2}|)\frac{\langle x-x_{2},\nu(x_{2})\rangle}{|x-x_{2}|}\,dx\\
 & \qquad\;-\int_{B_{r}(x_{1})\cap\{|\mathbf{u}|>0\}}\rho'(|x-x_{1}|)\frac{\langle x-x_{1},\nu(x_{1})\rangle}{|x-x_{1}|}\,dx\Big)=\frac{\lambda}{\varepsilon}o(r^{n}).
\end{align*}
Therefore if we compare the energy of $\mathbf{u}$ with $\mathbf{w}_{\lambda}$
(it is easy to see that $\mathbf{w}_{\lambda}\in V$) we get (in the
second equality below we use the fact that $\mathbf{v}_{\lambda}=\mathbf{w}_{\lambda}$
near $\partial\Omega$) 
\begin{align*}
0\le J_{\varepsilon}(\mathbf{w}_{\lambda})-J_{\varepsilon}(\mathbf{u}) & =\int_{\partial\Omega}\Gamma(x,\partial_{\nu}\mathbf{w}_{\lambda})-\Gamma(x,\partial_{\nu}\mathbf{u})\,d\sigma\\
 & \qquad\qquad+f_{\varepsilon}\big(\big|\{|\mathbf{w}_{\lambda}|>0\}\big|\big)-f_{\varepsilon}\big(\big|\{|\mathbf{u}|>0\}\big|\big)\\
 & =\int_{\partial\Omega}\Gamma(x,\partial_{\nu}\mathbf{v}_{\lambda})-\Gamma(x,\partial_{\nu}\mathbf{u})\,d\sigma+\frac{\lambda}{\varepsilon}o(r^{n})\\
 & =-\lambda\sum_{i}\int_{{\textstyle (}B_{r}(x_{1})\cup B_{r}(x_{2}){\textstyle )}-E}h^{i}\partial_{\lambda}F|_{v=u^{i},\,\lambda=0}\,dx+o(\lambda)+\frac{\lambda}{\varepsilon}o(r^{n}).
\end{align*}
Hence if we divide by $\lambda$ and let $\lambda\to0$ we obtain
\begin{equation}
0\le-\sum_{i}\int_{{\textstyle (}B_{r}(x_{1})\cup B_{r}(x_{2}){\textstyle )}-E}h^{i}\partial_{\lambda}F|_{v=u^{i},\,\lambda=0}\,dx+o(r^{n}).\label{Compar:v-u}
\end{equation}
So we need to compute $\partial_{\lambda}F|_{v=u^{i},\,\lambda=0}$. 

Next let us compute $F$ explicitly. Set $x=T_{r,\lambda}^{-1}(y)$
so that $y=T_{r,\lambda}(x)$. To simplify the notation we suppress
the $\lambda$ or $r$ in the indices. We have $v^{i}(T(x))=v^{i}(y)=w^{i}(x)$.
Hence 
\begin{align*}
\partial_{x_{k}}w^{i} & =\sum_{j}\partial_{y_{j}}v^{i}\partial_{x_{k}}T^{j},\\
\partial_{x_{k}x_{k}}^{2}w^{i} & =\sum_{j}\partial_{x_{k}}\big(\partial_{y_{j}}v^{i}\partial_{x_{k}}T^{j}\big)\\
 & =\sum_{j,\ell}\partial_{y_{j}y_{\ell}}^{2}v^{i}\partial_{x_{k}}T^{j}\partial_{x_{k}}T^{\ell}+\sum_{j}\partial_{y_{j}}v^{i}\partial_{x_{k}x_{k}}^{2}T^{j}.
\end{align*}
Therefore 
\[
0=\Delta w^{i}=\sum_{j,\ell,k}\partial_{y_{j}y_{\ell}}^{2}v^{i}\partial_{x_{k}}T^{j}\partial_{x_{k}}T^{\ell}+\sum_{j,k}\partial_{y_{j}}v^{i}\partial_{x_{k}x_{k}}^{2}T^{j}.
\]
It is easy to see that inside $B_{r}(x_{a})$ ($a=1,2$) we have 
\begin{align*}
\partial_{x_{k}}T^{j} & =\delta_{jk}+(-1)^{a}\lambda\rho'(|z|)\frac{z_{k}}{|z|}\nu^{j}(x_{a}),\qquad\big(z=\frac{x-x_{a}}{r}\big),\\
\partial_{x_{k}x_{k}}^{2}T^{j} & =(-1)^{a}\lambda\partial_{x_{k}}\big(\rho'(|z|)\frac{z_{k}}{|z|}\big)\nu^{j}(x_{a}).
\end{align*}
Thus 
\begin{align*}
F[v,\lambda] & =\sum_{j,\ell,k}\partial_{y_{j}y_{\ell}}^{2}v\partial_{x_{k}}T^{j}\partial_{x_{k}}T^{\ell}+\sum_{j,k}\partial_{y_{j}}v\partial_{x_{k}x_{k}}^{2}T^{j}\\
 & =\sum_{j,\ell,k}\big[\delta_{jk}+(-1)^{a}\lambda\rho'(|z|)\frac{z_{k}}{|z|}\nu^{j}(x_{a})\big]\big[\delta_{\ell k}+(-1)^{a}\lambda\rho'(|z|)\frac{z_{k}}{|z|}\nu^{\ell}(x_{a})\big]\partial_{y_{j}y_{\ell}}^{2}v\\
 & \qquad\qquad+\sum_{j,k}\big[(-1)^{a}\lambda\partial_{x_{k}}\big(\rho'(|z|)\frac{z_{k}}{|z|}\big)\nu^{j}(x_{a})\big]\partial_{y_{j}}v
\end{align*}
in $B_{r}(x_{a})$ for $a=1,2$, and $F[v,\lambda]=\Delta v$ elsewhere.
Now note that 
\[
\sum_{k}\partial_{x_{k}}\big(\rho'(|z|)\frac{z_{k}}{|z|}\big)=\sum_{k}\big(\rho''(|z|)\frac{z_{k}^{2}}{r|z|^{2}}+\rho'(|z|)\frac{1}{r|z|}-\rho'(|z|)\frac{z_{k}^{2}}{r|z|^{3}}\big)=\frac{1}{r}\rho''(|z|).
\]
Hence we get 
\[
\partial_{\lambda}F|_{v=u^{i},\,\lambda=0}=(-1)^{a}\Big(2\rho'(|z|)\sum_{j,k}\frac{z_{k}}{|z|}\nu^{j}(x_{a})\partial_{jk}^{2}u^{i}+\frac{1}{r}\rho''(|z|)\sum_{j}\nu^{j}(x_{a})\partial_{j}u^{i}\Big)
\]
in $B_{r}(x_{a})$ for $a=1,2$. Note that although a priori $z,u^{i}$
in the above equation are functions of $y$, at $\lambda=0$ we have
$y=x$, and thus we can regard them as functions of $x$ too.

Let $\mathbf{u}_{r}(z)=\frac{1}{r}\mathbf{u}(x_{a}+rz)=\frac{1}{r}\mathbf{u}(x)$
and $h_{r}^{i}(z)=\frac{1}{r}h^{i}(x_{a}+rz)=\frac{1}{r}h^{i}(x)$.
Putting all these in (\ref{Compar:v-u}) we get (note that in the following integration by parts the boundary term is zero, since $\rho$ is $0$ for $z$ near $\partial B_{1}$ and $h^{i}$ is $0$ on $\partial E$) 
\begin{align*}
0 & \le-\sum_{i}\int_{{\textstyle (}B_{r}(x_{1})\cup B_{r}(x_{2}){\textstyle )}-E}h^{i}\partial_{\lambda}F|_{v=u^{i},\,\lambda=0}\,dx+o(r^{n})\\
 & =\sum_{a,i}(-1)^{a+1}\int_{B_{r}(x_{a})-E}h^{i}\Big(2\rho'(|z|)\sum_{j,k}\frac{z_{k}}{|z|}\nu^{j}(x_{a})\partial_{jk}^{2}u^{i}\\
 & \qquad\qquad\qquad\qquad\qquad\qquad\qquad\qquad+\frac{1}{r}\rho''(|z|)\sum_{j}\nu^{j}(x_{a})\partial_{j}u^{i}\Big)dx+o(r^{n})\allowdisplaybreaks\\
 & =\sum_{a,i}(-1)^{a+1}\int_{B_{r}(x_{a})-E}\Bigl(-2\sum_{k}\partial_{k}\Big[h^{i}\rho'(|z|)\frac{z_{k}}{|z|}\Big]\sum_{j}\nu^{j}(x_{a})\partial_{j}u^{i}\\
 & \qquad\qquad\qquad\qquad\qquad\qquad\qquad\qquad+\frac{1}{r}h^{i}\rho''(|z|)\sum_{j}\nu^{j}(x_{a})\partial_{j}u^{i}\Big)dx+o(r^{n})\allowdisplaybreaks\\
 & =\sum_{a,i}(-1)^{a+1}\int_{B_{r}(x_{a})-E}\Bigl(-2\sum_{k}\Big[\partial_{k}h^{i}\rho'(|z|)\frac{z_{k}}{|z|}+h^{i}\partial_{k}\big(\rho'(|z|)\frac{z_{k}}{|z|}\big)\Big]\\
 & \qquad\qquad\qquad\qquad\qquad\qquad\qquad\qquad+\frac{1}{r}h^{i}\rho''(|z|)\Big)\sum_{j}\nu^{j}(x_{a})\partial_{j}u^{i}\,dx+o(r^{n})\allowdisplaybreaks\\
 & =\sum_{a,i}(-1)^{a+1}\int_{B_{r}(x_{a})-E}\Bigl(-2\sum_{k}\Big[\partial_{k}h^{i}\rho'(|z|)\frac{z_{k}}{|z|}\Big]\\
 & \qquad\qquad\qquad\qquad\qquad\qquad\qquad\qquad-\frac{1}{r}h^{i}\rho''(|z|)\Big)\sum_{j}\nu^{j}(x_{a})\partial_{j}u^{i}\,dx+o(r^{n})\allowdisplaybreaks\\
 & =\sum_{a,i}(-1)^{a+1}r^{n}\int_{B_{1}\cap\{|\mathbf{u}_{r}|>0\}}\Bigl(-2\sum_{k}\Big[\partial_{k}h_{r}^{i}\rho'(|z|)\frac{z_{k}}{|z|}\Big]\\
 & \qquad\qquad\qquad\qquad\qquad\qquad\qquad\qquad-\frac{1}{r}rh_{r}^{i}\rho''(|z|)\Big)\sum_{j}\nu^{j}(x_{a})\partial_{j}u_{r}^{i}\,dz+o(r^{n}).
\end{align*}
Now note that $\partial_{j}u_{r}^{i}(z)\to q^{i}(x_{a})\nu^{j}(x_{a})=\partial_{j}u^{i}(x_{a})$
when $z\cdot\nu(x_{a})>0$ by the results of \cite{alt1984free}.
Next note that $h^{i}$ is Lipschitz continuous, since $u^{i}$ is Lipschitz and we have $0\le h^{i}\le cu^{i}$
for some constant $c$. To see this note that the function $\partial_{\xi_{i}}\Gamma(x,\partial_{\nu}\mathbf{u})$
is positive and continuous on the compact set $\partial\Omega$, so
it is bounded there, and thus for some $c>0$ we have $h^{i}=\partial_{\xi_{i}}\Gamma(x,\partial_{\nu}\mathbf{u})\le c\varphi^{i}=cu^{i}$
on $\partial\Omega$. Hence the claim follows by the maximum principle.
Therefore, by Lemma B.1 in \cite{fotouhi2023minimization}, we also
have $\partial_{k}h_{r}^{i}(z)\to p^{i}(x_{a})\nu^{k}(x_{a})=\partial_{k}h^{i}(x_{a})$
for some function $p^{i}$, and $h_{r}^{i}(z)\to\nabla h^{i}(x_{a})\cdot z$
as $h^{i}(x_{a})=0$. Thus if we divide the above expression by $r^{n}$
and let $r\to0$ we obtain 
\begin{align*}
0 & \le\sum_{a,i}(-1)^{a+1}\int_{B_{1}\cap\{z\cdot\nu(x_{a})>0\}}\Bigl(-2\sum_{k}\Big[\partial_{k}h^{i}(x_{a})\rho'(|z|)\frac{z_{k}}{|z|}\Big]\\
 & \qquad\qquad\qquad\qquad\qquad\qquad\qquad-\big(\nabla h^{i}(x_{a})\cdot z\big)\rho''(|z|)\Big)\sum_{j}\nu^{j}(x_{a})\partial_{j}u^{i}(x_{a})\,dz\allowdisplaybreaks\\
 & =\sum_{a,i}(-1)^{a+1}\int_{B_{1}\cap\{z\cdot\nu(x_{a})>0\}}\Bigl(-2\sum_{k}\Big[p^{i}(x_{a})\nu^{k}(x_{a})\rho'(|z|)\frac{z_{k}}{|z|}\Big]\\
 & \qquad\qquad\qquad\qquad\qquad\qquad\qquad\qquad-p^{i}(x_{a})\big(\nu(x_{a})\cdot z\big)\rho''(|z|)\Big)\partial_{\nu}u^{i}(x_{a})\,dz\allowdisplaybreaks\\
 & =\sum_{a,i}(-1)^{a}\int_{B_{1}\cap\{z\cdot\nu(x_{a})>0\}}\Bigl(\frac{2}{|z|}\rho'(|z|)+\rho''(|z|)\Big)\big(\nu(x_{a})\cdot z\big)p^{i}(x_{a})\partial_{\nu}u^{i}(x_{a})\,dz\\
 & =\sum_{a,i}(-1)^{a}\partial_{\nu}h^{i}(x_{a})\partial_{\nu}u^{i}(x_{a})\int_{B_{1}\cap\{z\cdot\nu(x_{a})>0\}}\Bigl(\frac{2}{|z|}\rho'(|z|)+\rho''(|z|)\Big)\big(\nu(x_{a})\cdot z\big)dz\\
 & =C_{\rho}\Big(\sum_{i}\partial_{\nu}h^{i}(x_{2})\partial_{\nu}u^{i}(x_{2})-\sum_{i}\partial_{\nu}h^{i}(x_{1})\partial_{\nu}u^{i}(x_{1})\Big),
\end{align*}
where $C_{\rho}=\int_{B_{1}\cap\{z\cdot\nu(x_{a})>0\}}\bigl(\frac{2}{|z|}\rho'(|z|)+\rho''(|z|)\big)\big(\nu(x_{a})\cdot z\big)dz$
does not depend on $x_{a}$; we have also used the fact that $p^{i}(x_{a})=\partial_{\nu}h^{i}(x_{a})$.
By switching the role of $x_{1},x_{2}$ we conclude that 
\[
\sum_{i}\partial_{\nu}h^{i}(x_{2})\partial_{\nu}u^{i}(x_{2})-\sum_{i}\partial_{\nu}h^{i}(x_{1})\partial_{\nu}u^{i}(x_{1})
\]
must be zero, as desired.
\end{proof}

The main idea to show the regularity of the free boundary lies in
utilizing the boundary Harnack principle, which allows us to reduce the system into a scalar problem. The key tool in employing this approach is non-tangential accessibility of the domain; for the definition of non-tangentially accessible (NTA) domains we refer to \cite{aguilera1987optimization}.

\begin{lem}
Let $\mathbf{u}$ be a solution of the minimization problem (\ref{eq: main_eq})
for $p=2$. Then $U=\{x:|\mathbf{u}(x)|>0\}$ is a non-tangentially
accessible domain. 
\end{lem}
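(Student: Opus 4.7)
The plan is to verify the three defining conditions of an NTA domain: the interior corkscrew condition, the exterior corkscrew condition, and the Harnack chain condition. Since Lemma~\ref{lem: u>0 near bdry} guarantees $U \supset \Omega_{\delta_{0}}$, the free boundary $\partial U = \partial E$ is separated from $\partial\Omega$, and $U$ is smooth in a neighborhood of $\partial\Omega$; thus it suffices to verify the NTA conditions at points of $\partial U$, working within one connected component of $U$ (each of which meets $\partial\Omega$ by Lemma~\ref{lem:coperative}).

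For the interior corkscrew at $x_{0} \in \partial U$ and $r > 0$ small, Theorem~\ref{thm: Du>c} (in its contrapositive form, since $B_{\tau r}(x_{0}) \not\subset E$) yields an index $j$ and a point $y \in \overline{B}_{r/2}(x_{0})$ with $u^{j}(y) \ge \tfrac{1}{2} m_{\varepsilon} r$. The uniform Lipschitz bound from Theorem~\ref{thm: Lip-reg} then forces $u^{j} > 0$ on $B_{\kappa r}(y)$ for some $\kappa > 0$ depending only on $\varepsilon$ and the Lipschitz constant, so $B_{\kappa r}(y) \subset U \cap B_{r}(x_{0})$ is a non-tangential ball of radius comparable to $r$.

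For the exterior corkscrew I combine the positive density $|E \cap B_{r}(x_{0})| \ge c r^{n}$ from \eqref{thm:porosity-rel1} with the upper Ahlfors bound $\mathcal{H}^{n-1}(\partial E \cap B_{r}(x_{0})) \le C r^{n-1}$ from Theorem~\ref{Thm:Hausdorff-dim-FB}(1). A Vitali-type covering of $\partial E \cap B_{r}(x_{0})$ uses $N \lesssim \eta^{1-n}$ balls of radius $\eta r$; their doubles cover the $\eta r$-neighborhood of $\partial E$ inside $B_{r}(x_{0})$ and have total Lebesgue measure $\lesssim \eta r^{n}$. For $\eta$ sufficiently small relative to $c$, this tube cannot contain all of $E \cap B_{r}(x_{0})$, so there exists $z \in E \cap B_{r}(x_{0})$ with $\mathrm{dist}(z, \partial E) \ge \eta r$, yielding an exterior corkscrew ball $B_{\eta r}(z) \subset E$.

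The Harnack chain condition is the main obstacle. Given $y_{1}, y_{2} \in U$ with $\mathrm{dist}(y_{j}, \partial U) \ge \rho$ and $|y_{1} - y_{2}| \le k\rho$, one must connect them inside $U$ by a chain of overlapping non-tangential balls whose cardinality depends only on $k$. The strategy is to exhibit $U$ as a locally John domain: starting from each $y_{j}$, iteratively apply the interior corkscrew at geometrically increasing scales to build a ``corkscrew ladder'' whose top rung is a common central corkscrew point at scale $\sim k\rho$, and then concatenate the two ladders. The number of links depends only on $k$ and the universal corkscrew constant $\kappa$, while the two-sided comparison $c\,\mathrm{dist}(x, \partial E) \le |\mathbf{u}(x)| \le C\,\mathrm{dist}(x, \partial E)$ from Corollary~\ref{cor-nondegeneracy} ensures that each rung sits well inside $U$.
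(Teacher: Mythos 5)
Your verification of the interior and exterior corkscrew conditions is essentially sound: the interior corkscrew follows from the nondegeneracy of Theorem~\ref{thm: Du>c} in contrapositive form combined with the uniform Lipschitz bound (though you should also invoke Lemma~\ref{lem:coperative} to identify $\{u^j>0\}$ with $U$), and the exterior corkscrew from positive density of $E$ plus Ahlfors regularity of $\partial E$ (note you need \emph{both} the upper and lower bounds from Theorem~\ref{Thm:Hausdorff-dim-FB}(1), not just the upper one, to bound the volume of the $\eta r$-tube around $\partial E$).

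The gap is in the Harnack chain condition, which you yourself flag as ``the main obstacle.'' The corkscrew ladder argument does not close: applying the interior corkscrew at scale $2^j\rho$ around a boundary point near $y_a$ produces \emph{some} non-tangential ball at that scale, but with no control on \emph{where} that ball sits inside $B_{2^j\rho}\cap U$. Nothing forces the two ladders ascending from $y_1$ and $y_2$ to reach a common rung, nor is there any guarantee that the straight segment between consecutive rungs of a single ladder stays inside $U$. In other words, ``concatenate the two ladders'' is exactly the quantitative connectivity one is trying to prove, not a consequence of the construction. Interior and exterior corkscrew conditions together do \emph{not} imply the Harnack chain condition (think of a domain with a thin slit), so the two-sided growth estimate $|\mathbf{u}(x)|\asymp\mathrm{dist}(x,\partial E)$ must be used in a genuinely nontrivial way --- for instance through a maximum-principle or Green-function argument showing the superlevel set $\{\mathcal{U}>c\rho\}$ is connected at scale $k\rho$ --- and that step is absent here.

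The paper avoids this entirely by a reduction: since $p=2$, the scalar function $\mathcal{U}=u^1+\cdots+u^m$ is harmonic in $U$, is Lipschitz, satisfies the linear nondegeneracy of Corollary~\ref{cor-nondegeneracy}, and has $\{\mathcal{U}>0\}=U$ by Lemma~\ref{lem:coperative}; one then cites Theorem~4.8 of \cite{aguilera1987optimization}, which establishes NTA for precisely such scalar configurations, Harnack chain included. This reduction is also why the section is restricted to $p=2$ (a sum of $p$-harmonic functions is not $p$-harmonic for $p\neq 2$), a constraint your sketch never actually uses, which is a further hint that something essential is missing.
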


\begin{proof}
This result follows from the same analysis as of Theorem 4.8 in \cite{aguilera1987optimization}
for the function ${\scriptstyle \mathcal{U}}=u^{1}+\cdots+u^{m}$.
Note that ${\scriptstyle \mathcal{U}}$ is harmonic in $\{|\mathbf{u}|>0\}=\{{\scriptstyle \mathcal{U}}>0\}$
(these two sets are equal due to Lemma \ref{lem:coperative}), and the
function ${\scriptstyle \mathcal{U}}$ is also Lipschitz continuous
and satisfies the nondegeneracy property by Corollary \ref{cor-nondegeneracy}.
\end{proof}

\begin{thm}
Let $x_{0}\in\mathcal{R}$ be a regular point of the free boundary.
Then there is $r> 0$
such that $B_r(x_{0})\cap\partial\{|\mathbf{u}|>0\}$  is a $C^{1,\alpha}$ hypersurface for some $\alpha>0$. 
\end{thm}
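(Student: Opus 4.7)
The plan is to reduce the vector problem to a scalar one-phase Bernoulli-type free boundary problem and then invoke the classical regularity theory at a regular point. The key tools are the boundary Harnack principle in NTA domains (available because $U$ is NTA by the previous lemma) and the free boundary condition \eqref{FB-condition} of Lemma~\ref{lem:FBC}.

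First I would exploit the NTA property of $U$ together with the Jerison--Kenig boundary Harnack principle. Near $x_0$, each $u^i$ is a positive harmonic function in $U$ vanishing continuously on $\partial E\cap B_r(x_0)$, and the same holds for every $h^i$, since $h^i>0$ in $U$ by the strong maximum principle (the boundary datum $\partial_{\xi_i}\Gamma(\cdot,\partial_\nu\mathbf{u})>0$ on $\partial\Omega$ by property (2) of $\Gamma$). Consequently the ratios $u^i/u^1$ and $h^i/u^1$ extend to H\"older continuous functions on $\overline{U\cap B_{r/2}(x_0)}$.

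Second I would combine this with the asymptotic expansion in Theorem~\ref{Thm:Hausdorff-dim-FB}(4): at every regular point $x\in\mathcal{R}$ close to $x_0$ one has $\partial_\nu u^i(x)=q^i(x)$, and a parallel statement produces a boundary trace $p^i(x):=\partial_\nu h^i(x)$ via a Hopf-type lemma (cf. Lemma~B.1 of \cite{fotouhi2023minimization}). The H\"older ratios from the previous step descend to these boundary traces, so each of $q^i/q^1$ and $p^i/q^1$ is H\"older continuous along $\mathcal{R}$ near $x_0$. Substituting into \eqref{FB-condition} yields
\[
\bigl(q^1(x)\bigr)^{2}\,\Phi(x)=C,\qquad \Phi(x):=\sum_{i=1}^{m}\frac{p^i(x)}{q^1(x)}\cdot\frac{q^i(x)}{q^1(x)},
\]
where $\Phi$ is H\"older continuous and bounded below by a positive constant; hence $q^1$ is H\"older continuous along $\partial E\cap B_r(x_0)$.

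Finally I would apply the scalar regularity theory to $u^1$, which is harmonic in $U$, vanishes on $\partial E$, is Lipschitz and nondegenerate by Corollary~\ref{cor-nondegeneracy}, and satisfies the Bernoulli condition $|\nabla u^1|=q^1(x)$ on $\partial E$ with a H\"older continuous positive coefficient. Since $x_0\in\mathcal{R}$ the blowup is the half-plane solution $q^1(x_0)(y\cdot\nu(x_0))^+$, so $u^1$ is $\varepsilon$-flat at a small enough scale; the improvement-of-flatness machinery (Alt--Caffarelli together with the De Silva viscosity approach for H\"older right-hand sides) then gives that $\partial E\cap B_{r'}(x_0)$ is a $C^{1,\alpha}$ hypersurface. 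The main obstacle is the passage from boundary Harnack comparability of the $u^i$ and $h^i$ in $U$ to genuine H\"older regularity of the normal derivative $q^1$ on $\partial E$: this is what turns the system-valued optimality condition into a scalar Bernoulli problem with H\"older coefficient, after which the flatness-to-$C^{1,\alpha}$ step is classical.
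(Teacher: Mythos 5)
Your proposal is correct and reaches the same conclusion, but by a genuinely different final reduction than the paper. The paper takes boundary Harnack ratios $G^i=u^i/u^1$ and $H^i=h^i/h^1$, whose boundary traces give $q^i/q^1$ and $p^i/p^1$; inserted into the optimality condition $\sum_i p^i q^i=C$, this yields only that the \emph{product} $\partial_\nu h^1\,\partial_\nu u^1$ has a H\"older extension to $\partial E$, and the paper then closes by invoking the system $\varepsilon$-regularity theorem of Maiale--Tortone--Velichkov (Theorem 3.1 of \cite{maiale2021epsilon}), which is tailored to exactly this ``product of two normal derivatives'' free boundary condition. You instead compare \emph{both} families against the single denominator $u^1$, obtaining $q^i/q^1$ and $p^i/q^1$ H\"older from the boundary Harnack principle, so that the optimality condition reads $(q^1)^2\Phi=C$ with $\Phi=\sum_i(p^i/q^1)(q^i/q^1)$ H\"older and bounded below (since the $i=1$ term is $h^1/u^1\ge c>0$ on $\overline{U}\cap B_r$ by BHP). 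This isolates $q^1=\sqrt{C/\Phi}$ itself as H\"older on $\partial E\cap B_r$, reducing the problem to the scalar one-phase Bernoulli problem for $u^1$ with a H\"older coefficient, after which De Silva's improvement of flatness applies. What your route buys is a more elementary final ingredient (scalar rather than system $\varepsilon$-regularity), at the mild cost of needing to articulate that $u^1$ is a viscosity solution to $|\nabla u^1|=q^1$ with the H\"older \emph{extension} of $q^1$, not just that the asymptotic expansion holds pointwise on $\mathcal{R}$; but this verification burden is no heavier than what the paper implicitly shoulders in asserting the viscosity identity $\partial_\nu h^1\,\partial_\nu u^1=g$ before applying \cite{maiale2021epsilon}.
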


\begin{proof}
We may assume that $u^{1}>0$ in $B_{r_{0}}(x_{0})\cap\{|\mathbf{u}|>0\}$ for some $r_0>0$.
First we show that for some $0<r\le r_{0}$ there is a H\"older continuous
function $g$ defined on $B_{r}(x_{0})\cap\partial\{|\mathbf{u}|>0\}$,
such that in the viscosity sense we have 
\[
\partial_{\nu}h^{1}\partial_{\nu}u^{1}=g\quad\text{ on }\partial E,
\]
where $h^{1}$ is defined in Lemma \ref{lem:FBC}. Since $B_{r_{0}}(x_{0})\cap\{|\mathbf{u}|>0\}$
is an NTA domain, the boundary Harnack inequality implies that $G^{i}:=u^{i}/u^{1}$
and $H^{i}=h^{i}/h^{1}$ are H\"older continuous functions in $B_{r}(x_{0})\cap\overline{\{|\mathbf{u}|>0\}}$
for some $0<r\le r_{0}$. Now if we consider a one-sided tangent ball
at some point $y\in B_{r}(x_{0})\cap\partial\{|\mathbf{u}|>0\}$,
we have asymptotic developments (see Lemma B.1 in \cite{fotouhi2023minimization},
noting that $h^{i}$ is Lipschitz as we have shown in the proof of
Lemma \ref{lem:FBC}) 
\[
\begin{split}u^{i}(y+x) & =q^{i}(y)(x\cdot\nu(y))^{+}+o(|x|),\\
h^{i}(y+x) & =p^{i}(y)(x\cdot\nu(y))^{+}+o(|x|).
\end{split}
\]
Therefore $G^{i}(y)=q^{i}(y)/q^{1}(y)$ and $H^{i}(y)=p^{i}(y)/p^{1}(y)$.
Thus from \eqref{FB-condition} we can infer that 
\[
p^{1}(y)q^{1}(y)\Big(1+\sum_{i>1}G^{i}(y)H^{i}(y)\Big)=\sum_{i}p^{i}(y)q^{i}(y)=\sum_{i}\partial_{\nu}h^{i}\partial_{\nu}u^{i}
\]
is constant for every $y\in B_{r}(x_{0})\cap\partial\{|\mathbf{u}|>0\}$. Note that $G^{i},H^{i}>0$ at $y$ as $p^{i},q^{i}>0$. Hence by applying Theorem 3.1 in \cite{maiale2021epsilon}
we get the desired result. 
\end{proof}

\begin{cor}
Let $\mathbf{u}$ be a solution of the minimization problem (\ref{eq: main_eq})
for $p=2$. Then the regular part of the free boundary, $\mathcal{R}$,
is analytic. 
\end{cor}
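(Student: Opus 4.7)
The strategy is the classical partial hodograph / Legendre approach: use the $C^{1,\alpha}$ regularity of $\mathcal{R}$ from the preceding theorem to flatten the free boundary, rewrite the coupled problem as a nonlinear elliptic boundary value problem on a half-ball whose coefficients and boundary data are real-analytic, and then bootstrap to smoothness and ultimately to analyticity.

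Fix $x_0\in\mathcal{R}$. By the preceding theorem, $\partial E$ is a $C^{1,\alpha}$ hypersurface near $x_0$, and by Theorem \ref{Thm:Hausdorff-dim-FB}(4) we have $\partial_{\nu}u^1(x_0)=q^1(x_0)>0$ (after relabeling, if necessary, so that $u^1$ has a positive normal derivative at $x_0$). Choose coordinates with $\nu(x_0)=e_n$. By the inverse function theorem the map $T(x):=(x',u^1(x))$ is a $C^{1,\alpha}$ diffeomorphism from a neighborhood of $x_0$ onto a one-sided neighborhood of the origin in $\{y_n\ge 0\}$, with $C^{1,\alpha}$ inverse $T^{-1}(y)=(y',\phi(y))$. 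The free boundary $\partial E$ is straightened to $\{y_n=0\}$, and in the original coordinates the free boundary is recovered as the graph $x_n=\phi(y',0)$; so it suffices to show $\phi(\cdot,0)$ is real-analytic.

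In the $y$-coordinates, the equation $\Delta u^1=0$ becomes a quasilinear elliptic PDE for $\phi$ (the standard hodograph equation) whose coefficients are real-analytic functions of $\nabla\phi$. For $i\ge 2$, the composed functions $\tilde u^i:=u^i\circ T^{-1}$ satisfy linear elliptic equations whose coefficients depend analytically on $\nabla\phi$, and likewise $\tilde h^i:=h^i\circ T^{-1}$ satisfy linear elliptic equations with analytic dependence on $\nabla\phi$. The conditions $u^i=0$ and $h^i=0$ on $\partial E$ transform into homogeneous Dirichlet conditions on $\{y_n=0\}$, while the free boundary condition \eqref{FB-condition} becomes a nonlinear (essentially oblique) boundary condition on $\{y_n=0\}$ coupling the first derivatives of $\phi$, $\tilde u^i$, and $\tilde h^i$. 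Using the strict positivity of $q^i(x_0)$ and $p^i(x_0):=\partial_{\nu}h^i(x_0)$, one checks the Lopatinskii--Shapiro complementing condition for the resulting mixed Dirichlet / nonlinear-oblique system.

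With ellipticity and the complementing condition in hand, a standard Schauder bootstrap applies: if $(\phi,\tilde u^i,\tilde h^i)\in C^{k,\alpha}$ up to $\{y_n=0\}$, linearizing and applying Agmon--Douglis--Nirenberg estimates gains one derivative, so iterating produces $C^\infty$ regularity. Since every coefficient of the transformed PDE system and every term in the boundary condition \eqref{FB-condition} is a real-analytic function of its arguments, the analytic regularity theorem for nonlinear elliptic systems with analytic data (of Morrey/Friedman type) then upgrades $C^\infty$ to real-analyticity of $\phi$. In particular $y'\mapsto\phi(y',0)$ is analytic, so $\mathcal{R}$ is locally an analytic graph, which is what we wanted. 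The main obstacle is verifying the complementing condition for the coupled nonlinear boundary problem, which reduces to showing non-vanishing of a determinant built from $q^i(x_0)$ and $p^i(x_0)$; their strict positivity at regular points should make this direct. Once that is in place, the bootstrap and the analytic step are standard.
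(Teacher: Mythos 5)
Your proof follows the same hodograph--Legendre transformation strategy as the paper: flatten the free boundary via $y=(x',u^1(x))$, note that the transformed functions (your $\phi,\tilde u^i,\tilde h^i$, the paper's $v^1,v^i,w^i$) satisfy an elliptic system with analytic dependence on the unknowns and their gradients together with homogeneous Dirichlet data and the nonlinear boundary condition \eqref{FB-condition}, and then invoke analytic regularity theory for elliptic systems satisfying the complementing condition. The paper compresses the last step by citing Theorem 7.1 of \cite{aguilera1987optimization}, whereas you unpack the Lopatinskii--Shapiro verification and the Agmon--Douglis--Nirenberg/Morrey bootstrap, but the underlying argument is the same.
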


\begin{proof}
Suppose $0\in\mathcal{R}$ and $u^{1}>0$ in $B_{r}\cap\{|\mathbf{u}|>0\}$.
Then we apply the hodograph-Legendre transformation $x\mapsto y=(x_{1},\dots,x_{n-1},u^{1})$.
Next we define the partial Legendre functions 
\begin{align*}
v^{1}(y):=x_{n},\quad v^{i}(y):=u^{i}(x), & \quad\text{for }i=2,\cdots,m,\\
w^{i}(y):=h^{i}(x), & \quad\text{for }i=1,\cdots,m.
\end{align*}
As $\mathcal{R}$ is $C^{1,\alpha}$, it follows that $u^{i}$ and
$h^{i}$ are in $C^{1,\alpha}(\overline{B_{r}\cap\{|\mathbf{u}|>0\}})$.
So, $v^{i}$ and $w^{i}$ are $C^{1,\alpha}$ in a neighborhood of
the origin in $\{y_{n}\ge0\}$. Now we have verified all the hypothesis
of Theorem 7.1 in \cite{aguilera1987optimization}, and through a similar argument we can obtain the analyticity of $\mathcal{R}$.
\end{proof}

 \section*{Acknowledgements}
This project was initiated  while the authors stayed at Institute Mittag Leffler (Sweden), during the program Geometric aspects of nonlinear PDE.
H. Shahgholian was supported by Swedish research Council.

\section*{Declarations}

\noindent {\bf  Data availability statement:} All data needed are contained in the manuscript.

\medskip

\noindent {\bf  Funding and/or Conflicts of interests/Competing interests:} The authors declare that there are no financial, competing or conflict of interests.

\bibliographystyle{plain}
\bibliography{Bibliography}

\end{document}